\documentclass[11pt,oneside,reqno]{amsart}

\usepackage[margin=1in]{geometry}  

\usepackage{graphicx} 
\usepackage{comment}
\usepackage{amssymb,amsbsy,amsmath,amsfonts,amssymb,amscd}
\usepackage{mathrsfs}
\usepackage{epsfig, graphicx}
\usepackage{graphics,epsfig}
\usepackage{mathrsfs}
\usepackage{amsthm}
\usepackage{mathtools}
\usepackage[dvipsnames]{xcolor}
\usepackage{enumitem}
\usepackage{graphicx}
\usepackage{subcaption}
\newtheorem{remark}{Remark}

\newtheorem{proposition}{Proposition}

\usepackage{algorithm}
\usepackage{algpseudocode}
\usepackage{caption}
\usepackage{subcaption}
\usepackage{cleveref}

\usepackage{algorithm}
\usepackage{algpseudocode}

\newcommand{\half}{\frac{1}{2}}
\newcommand{\energy}{\mathcal{E}}

\newcommand{\Amat}{\mathsf{A}}

\newcommand{\Dmat}{\mathsf{D}}
\newcommand{\Imat}{\mathsf{I}}

\newcommand{\Jmat}{\mathsf{J}}

\newcommand{\rd}{\mathrm{d}}

\newcommand{\calS}{\mathcal{S}}

\newcommand{\Wass}{\mathcal{W}}

\newcommand{\bx}{{\boldsymbol{x}}}

\newcommand{\by}{{\boldsymbol{y}}}
\newcommand{\bv}{{\boldsymbol{v}}}
\newcommand{\bu}{\boldsymbol{u}}
\newcommand{\bS}{\boldsymbol{S}}

\newcommand{\bT}{\boldsymbol{T}}
\newcommand{\bE}{\boldsymbol{E}}
\newcommand{\bJ}{\boldsymbol{J}}

\newcommand{\RR}{{\mathbb{R}}}

\newcommand{\np}{{n+1}}
\newcommand{\norm}[1]{ \| #1 \|}

\newcommand{\R}{\mathbb{R}}

\renewcommand{\paragraph}[1]{\noindent{\bf #1}}

\newcommand{\lw}[1]{\textcolor{cyan}{{Wang: #1}}}

\title[Deep Kinetic JKO schemes]{Deep Kinetic JKO schemes for Vlasov-Fokker-Planck Equations}
\keywords{Kinetic JKO schemes; Neural ODEs; Vlasov-Fokker-Planck Equations, Symplectic integrator; Conservative-dissipative structure}
\author[Lee]{Wonjun Lee}
\email{lee.8222@osu.edu}
\address{Mathematics Department, The Ohio State University, OH 43210.}
\author[Wang]{Li Wang}
\email{liwang@umn.edu}
\address{School of Mathematics, University of Minnesota, MN 55455.}
\author[Li]{Wuchen Li}
\email{wuchen@mailbox.sc.edu}
\address{Department of Mathematics, University of South Carolina, Columbia, SC 29208.}

\begin{document}

\maketitle
\begin{abstract}
We introduce a deep neural network–based numerical method for solving kinetic Fokker Planck equations, including both linear and nonlinear cases. Building upon the conservative dissipative structure of Vlasov-type equations, we formulate a class of generalized minimizing movement schemes as iterative constrained minimization problems: the conservative part determines the constraint set, while the dissipative part defines the objective functional. This leads to an analog of the classical Jordan–Kinderlehrer–Otto (JKO) scheme for Wasserstein gradient flows, and we refer to it as the kinetic JKO scheme. To compute each step of the kinetic JKO iteration, we introduce a particle-based approximation in which the velocity field is parameterized by deep neural networks. The resulting algorithm can be interpreted as a kinetic-oriented neural differential equation that enables the representation of high-dimensional kinetic dynamics while preserving the essential variational and structural properties of the underlying PDE. We validate the method with extensive numerical experiments and demonstrate that the proposed kinetic JKO–neural ODE framework is effective for high-dimensional numerical simulations.       
\end{abstract}

\section{Introduction}
Many dynamical models in complex systems from physics, chemistry, and engineering exhibit a conservative–dissipative structure \cite{RevModPhys.87.593,DuongPeletierZimmer2014ConservativeDissipativeKramers,ottinger2005beyond}: part of the evolution is reversible and preserves an energy-like quantity, while the remaining part is irreversible and drives the system toward equilibrium through entropy dissipation. Examples range from kinetic equations with collisions to viscous and diffusive continuum models to thermodynamic systems. 

In this paper, we focus on designing numerical schemes for a particular class of conservative–dissipative systems, namely the kinetic Fokker–Planck equation, which plays a central role in plasma physics and dynamical density functional theory \cite{DDFT}. A representative form is:
\begin{align} \label{VFP00}
     \partial_t f + \bv \cdot \nabla_\bx f - \nabla_\bx \phi \cdot \nabla_\bv f = \nabla_\bv \cdot (\bv f + \nabla_\bv f)\,,
\end{align}
where $f$ is a probability density function defined on the phase space $(\bx,\bv) \in \Omega_\bx \times \R^d$. $\phi(\bx)\in\mathbb{R}^1$ represents an external potential function, which may either be prescribed a priori or determined self-consistently. The left-hand side of \eqref{VFP00} describes the reversible Hamiltonian dynamics, under which the Hamiltonian functional
\begin{equation*}
    \mathcal H[f] = \int \half|\bv|^2 f + \phi(\bx) f \rd \bx \rd \bv\,,
\end{equation*}
is conserved. In contrast, the right-hand side represents the dissipative dynamics, which drives the system toward local equilibrium by dissipating a relative entropy functional
 \begin{equation*}
    \mathcal S[f] = \int \half|\bv|^2 f + f \log f \rd \bx \rd \bv\,.
    \end{equation*}
There is also a free energy $\energy[f]$, defined as the sum of $\mathcal{H}$ and the negative Shannon–Boltzmann entropy $\int f \log f ,\mathrm{d}\bx ,\mathrm{d}\bv$, which dissipates along the dynamics \eqref{VFP00}.

Simulating system \eqref{VFP00} remains a challenging task. 
One central difficulty, as emphasized in \cite{ottinger2005beyond}, is the preservation of the underlying conservative-dissipative structure under suitable time- and spatial-discretization schemes, which is essential for ensuring reliability and physical fidelity in numerical simulations. Another obstacle arises from the high dimensionality of the models, which are posed in a seven-dimensional phase space: 3 in $\bx$, 3 in $\bv$, and 1 in $t$. As a result, grid-based methods rapidly become computationally infeasible due to the \emph{curse of dimensionality}. Instead, machine learning based approaches, such as neural ODEs \cite{chen2018neural,haber2018stable,PNAS}, have emerged as promising alternatives for approximating high-dimensional probability densities.

In particular, for equations that contain only the dissipative part and does not depend on the spatial variable, 
that is, $\partial_t f(t,\bv) = \nabla_v \cdot(\bv f(t,\bv) + \nabla_\bv f(t,\bv))$ or more generally 
\begin{align} \label{cont}
    \partial_t f(t,\bv) = \nabla_\bv \cdot (f \bu[f])\,,
\end{align}
where $\bu[f]\in\mathbb{R}^d$ is a velocity field that may depend linearly or nonlinearly on $f$, several learning-enhanced methods have been developed to address the challenge of high dimensionality \cite{boffi2023probability, mokrov2021large, li2023self, HuLiuWangXu2024EnergeticVariationalNNGF, lu2024score,ilin2025transport,huang2025score, huang2024jko, XuChengXie2023JKOiFlow}. Among these, transport-based approaches have attracted noticeable attention \cite{boffi2023probability, li2023self}.
The key idea starts from the particle representation of \eqref{cont}. Specifically, let $\{\bv_i(t)\}_{i=1}^N$ be a set of i.i.d. samples drawn from $f(t,\cdot)$. Their evolution is then governed by
\begin{align} \label{particle}
    \frac{\rd}{\rd t} \bv_i(t) = \bu(f(\bv_1,\cdots , \bv_N))\,.
\end{align}
The task is to infer the velocity field $\bu$ directly from the particle ensemble, without explicitly accessing the density $f$. Approaches such as score matching \cite{boffi2023probability} and velocity matching \cite{li2023self} have been developed to accomplish this goal. 

Another approach also relies on the particle representation \eqref{particle}. However, instead of updating the particles explicitly, it learns the velocity field implicitly through the minimizing movement scheme, also known as the JKO scheme \cite{JKO}. More precisely, one views 
\eqref{cont} as the Wasserstein gradient flow 
\begin{align*}
    \partial_t f(t,\bv) = \nabla_\bv \cdot (f \nabla_\bv \frac{\delta \mathcal S}{\delta f})\,,
\end{align*}
where $\frac{\delta}{\delta f}$ is the $L^2$ first variation operator w.r.t. function $f$, and with a slight abuse of notation, $\mathcal{S}$ is the relative entropy for $f$ only in  variable $\bv$. Given a time approximation function  
 $f^{n}(\cdot) \approx f(t^{n}, \cdot)$, the JKO scheme determines $f^{n+1} (\cdot) \approx f(t^{n+1}, \cdot)$ with $t^{n+1} = (n+1)\Delta t$ through:
\begin{align} \label{JKO00}
    f^{n+1} \in \arg \min_f \Big\{   {\Wass_2^2(f, f^n)} +  2 \Delta t \calS(f)\Big\}\,.
\end{align}
Here, $ \Wass_2(f, f^n)$ denotes the Wasserstein-2 distance between $f$ and $f^n$. The scheme in \eqref{JKO00} can be interpreted as a time-implicit scheme of gradient descent, where the functional on the right-hand side acts as the Wasserstein proximal operator associated with $\calS$. Using the dynamic formulation of the Wasserstein distance \cite{BB00}, \eqref{JKO00} can be reformulated as \cite{LiWang22}
\begin{equation} \label{classicalJKO}
\begin{dcases}
& (f,\bu)=\arg\inf_{f,\bu}~ \int_0^1\int_{\RR^d} f |\bu|^2 \rd \bv \rd \tau  + 2\Delta t\calS(f(1,\cdot))\,,
\\ & \textrm{s.t.} \quad  \partial_\tau f + \nabla_\bv \cdot (f \bu) = 0\,, \quad f(0,\bv) = f^n (\bv)\,.
\end{dcases}
\end{equation}
Let 
\[
\frac{\rd }{\rd \tau} \bT(\tau, \bv) = \bv (\tau, \bT(\tau,\bv))\,, \qquad \bT(0,\bv) = \bv\,.
\]
denote the flow map associated with the velocity field $\bu$. Using a particle approximation of the density, \eqref{classicalJKO} can be translated into the following optimization problem over the velocity field:
\begin{equation} \label{JKO-NN1} \hspace{-1cm} 
    \begin{dcases}
    \min_{\bu}  \frac{1}{N}\sum_{j=1}^N \bigg[  \int_0^1 |\bu (\tau, \bT(\tau,\bv_j ))|^2 \rd \tau  + 2 \Delta t V(\bT(1, \bv_j)) \bigg]  + 2\Delta t \calS(\bT_\# f^n)\,,
    \\ \text{s.t.} ~~\frac{\rd}{\rd \tau} \bT(\tau, \bv_j) = \bu  (\tau, \bT(\tau,\bv_j)), \qquad \bT(0,\bv_j) = \bv_j^n \,.
    \end{dcases}
\end{equation}
Further details can be found in \cite{lee2024deep}.

When considering the full system \eqref{VFP00}, an additional requirement is the conservation of the Hamiltonian. While general methods such as velocity matching \cite{li2023self, ZhouOsherLi2025FokkerPlanckMFCSNF} can be applied directly to \eqref{VFP00} in the extended phase space $(\bx,\bv)$, the resulting models do not necessarily preserve the desired conservative–dissipative structure. 
In this work, we instead exploit the intrinsic decomposition of kinetic equations into conservative Hamiltonian and dissipative gradient flow components. Our main idea is to introduce a variant of the formulation \eqref{classicalJKO}, in which the conservative dynamics are encoded in the constraint set, while the dissipative dynamics determine the objective functional. We refer to this variational formulation as the {\it kinetic JKO scheme}. In this way, both the conservative and dissipative structures of the system are naturally preserved. Moreover, the JKO framework guarantees the monotonic decay of the associated Lyapunov functional, such as the free energy.



We note that generalized JKO schemes for kinetic equations have been studied in \cite{huang2000variational,DuongPeletierZimmer2014ConservativeDissipativeKramers}, particularly within the framework of general Equation for Non-equilibrium reversible–irreversible coupling (GENERIC) \cite{ottinger2005beyond} and macroscopic fluctuation theory (MFT) \cite{MFT}. However, that work primarily focuses on the theoretical properties, such as Kantorovich formulations, of variational problems and does not aim to provide efficient and scalable numerical algorithms. In contrast, our approach makes this feasible by combining a particle representation with a density evolution parameterized through a neural ODE. Another related line of work is based on the idea of score-matching. For instance, \cite{lu2024score,boffi2023probability,ilin2025transport,huang2025score} apply score matching methods to approximate mean-field Fokker-Planck equations. Some aspects of convex analysis for the score-based matching optimization method for two-layer neural network functions have been studied \cite{WangChenPilanciLi2024OptimalNNWassersteinGradient}. Additionally, \cite{HuLiuWangXu2024EnergeticVariationalNNGF,JinLiuWuYeZhou2025PWGF,Lindsey2025MNE,LiuLiZhaZhou2022NeuralParametricFokkerPlanck,ZUO2026114501} have studied neural projected schemes, which can be viewed as semi-time-discretizations of the neural network-based computational method. 
Compared with the above-mentioned existing works, 
our approach is variational: time discretization is performed at the level of a constrained minimization problem. This structure provides enhanced numerical stability and ensures compatibility with energy dissipation.

The rest of the paper is organized as follows. Section~\ref{sec2} reviews the kinetic Fokker–Planck equation and its conservative–dissipative decomposition from a free energy dissipation perspective. We illustrate this decomposition for both linear and nonlinear cases. Section~\ref{sec3} introduces the generalized kinetic JKO scheme in Eulerian coordinates and then studies its implementation in Lagrangian coordinates. We employ kinetic neural ODEs to approximate the logarithm of the density and discuss the numerical properties of the resulting method, including the free energy dissipation. Section~\ref{sec4} presents numerical experiments that verify the scheme's accuracy, stability, and scalability. The paper concludes in Section~\ref{sec5}.

\section{Kinetic Fokker-Planck equation}\label{sec2}
In this section, we review the kinetic Fokker–Planck equation, in both its linear and nonlinear forms, and highlight its conservative–dissipative structure.

\subsection{Linear case}
Consider the Vlasov-Fokker-Planck equation
\begin{align} \label{VFP0}
     \partial_t f + \bv \cdot \nabla_\bx f - \nabla_\bx \phi \cdot \nabla_\bv f = \epsilon \nabla_\bv \cdot (\bv f + \nabla_\bv f)\,,
\end{align}
where $f:=f(t,\bx,\bv)$ denotes the phase-space number density of charged particles at time $t$, location $\bx$ and with velocity $\bv$. The function $\phi(\bx)$ is a given potential, with $\bx \in \R^d$ or a compact subset $\Omega_\bx \subset \mathbb{R}^d$,  and $\bv\in\mathbb{R}^d$. 
Unless stated otherwise, when $\Omega_\bx$ is compact,  we impose periodic boundary conditions in the spatial variable $\bx$. And $\epsilon>0$ models the strength of the collision. 

We begin by rewriting \eqref{VFP0} in a form that clearly reveals its conservative–dissipative structure. Specifically, it can be written as:
\begin{align} \label{VFP1}
    \partial_t f =  \underbrace{\nabla_{\bx,\bv} \cdot \left[  f \Jmat\nabla_{\bx,\bv} \frac{\delta \energy}{\delta f}\right]}_{\text{conservative}}+ \underbrace{\nabla_{\bx,\bv} \cdot \left[  f \Dmat
\nabla_{\bx,\bv} \frac{\delta \energy}{\delta f}\right]}_{\text{dissipative}}\,, 
\end{align}
where the free energy is defined as 
\begin{align} \label{energy0}
    \energy[f] := \int_{\RR^d}\!\int_{\Omega_{\bx}} \frac{|\bv|^2}{2} f + \phi(\bx) f + f \log f \rd \bx \rd \bv \,.
\end{align}
Here $\Jmat \in\mathbb{R}^{2d\times 2d}$ is an anti-symmetric matrix and $\Dmat \in \mathbb{R}^{2d\times 2d}$ is a symmetric positive semi-definite matrix 
\begin{equation*} 
  \Dmat =\begin{pmatrix}
0 & 0\\
0 & \epsilon\Imat
\end{pmatrix}\,, \quad \Jmat =\begin{pmatrix}
0 & -\Imat\\
\Imat & 0
\end{pmatrix}\,.  
\end{equation*}
And $\Imat$ is an identity matrix in $\mathbb{R}^{d\times d}$, and the operator $\frac{\delta}{\delta f}$ is the $L^2$ first variation w.r.t. function $f$.

Indeed, the form \eqref{VFP1} follows from a direct computation. Note that 
\begin{align*}
    \frac{\delta \energy[f]}{\delta f} ={\frac{|\bv|^2}{2}  + \phi(\bx)  + \log f+1} \, ,
\end{align*}
and 
\begin{equation*}
\nabla_{\bx,\bv}   \frac{\delta \energy[f]}{\delta f}=\begin{pmatrix}
\nabla_\bx\phi(\bx)+\nabla_\bx\log f
\\
\bv+\nabla_\bv\log f 
\end{pmatrix}\,.
\end{equation*}
Clearly, 
\begin{equation*}
\begin{split}
\nabla_{\bx, \bv}\cdot \left[  f \Jmat \nabla_{\bx,\bv} \frac{\delta \energy}{\delta f}\right]
=& \nabla_{\bx, \bv}\cdot \left[ f\begin{pmatrix}
-\bv-\nabla_{\bv}\log f\\
\nabla_\bx\phi(\bx)+\nabla_{\bx}\log f
\end{pmatrix}
\right]\\
=& \nabla_{\bx, \bv}\cdot \left[ f\begin{pmatrix}
-\bv\\
\nabla_\bx\phi(\bx)
\end{pmatrix}
\right]\,,
\end{split}
\end{equation*}
where in the last equality, we use the fact that $f\nabla_{\bx,\bv}\log f= \nabla_{\bx,\bv} f$, and   
\begin{equation*}
 \nabla_{\bx,\bv}\cdot \left[f \begin{pmatrix}-\nabla_{\bv}\log f\\
\nabla_{\bx}\log f
\end{pmatrix}\right]=  \nabla_{\bx,\bv}\cdot \left[\begin{pmatrix}-\nabla_{\bv} f\\
\nabla_{\bx} f
\end{pmatrix}\right]=0\,.
\end{equation*}
Similarly, we have
\begin{equation*}
\begin{split}
   \nabla_{\bx,\bv} \cdot \left[  f \Dmat
\nabla_{\bx,\bv} \frac{\delta \energy}{\delta f}\right]=& \nabla_{\bx,\bv} \cdot \left[  f \begin{pmatrix}
0\\
\bv+\nabla_{\bv}\log f
\end{pmatrix}
\right]\\
=&\nabla_{\bv}\cdot\big(f\bv+\nabla_{\bv}f\big)\,,
\end{split}
\end{equation*}
where we use the fact that $f\nabla_{\bv}\log f=\nabla_{\bv}f$.

Since $\Dmat + \Jmat$ is non-degenerate, it follows from \eqref{VFP1} that the equilibrium occurs when $\nabla_{x,v}\frac{\delta}{\delta f}\energy[f]=0$, which implies that $\frac{\delta}{\delta f}\energy[f]=\textrm{Constant}$. Therefore, the invariant distribution of equation \eqref{VFP0} satisfies 
\begin{equation*}
    f^\infty(\bx,\bv)=\frac{1}{Z}e^{-(\frac{|\bv|^2}{2}+\phi(\bx))}\,,
\end{equation*}
where $Z$ is a normalization constant satisfying 
\begin{equation*}
   Z=\int_{\RR^d}\!\int_{\Omega_{\bx}} e^{-(\frac{|\bv|^2}{2}+\phi(\bx))}\rd\bx \rd\bv<+\infty\,. 
\end{equation*}

The following free energy dissipation result holds. 
\begin{proposition} \label{thm-diss}
Suppose $f(t,\cdot)$ is the solution of equation \eqref{VFP0}. Then 
\begin{equation} \label{0414}
    \frac{d}{dt}\energy[f](t,\cdot)=-\int_{\RR^d}\int_{\Omega_\bx}(\nabla_{\bx,\bv}\frac{\delta}{\delta f}\energy[f], \Dmat \nabla_{\bx,\bv}\frac{\delta}{\delta f}\energy[f]) f \rd \bx \rd \bv \leq 0\,,
\end{equation}
where $\energy[f]$ is defined in \eqref{energy0}. 
\end{proposition}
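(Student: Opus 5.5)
The plan is to differentiate the free energy along the flow and use the conservative–dissipative form \eqref{VFP1} of \eqref{VFP0}, which was verified just above. Writing $\mu := \frac{\delta \energy}{\delta f} = \frac{|\bv|^2}{2}+\phi(\bx)+\log f+1$, the chain rule for the first variation gives
\[
\frac{d}{dt}\energy[f] = \int_{\RR^d}\!\int_{\Omega_\bx} \mu\, \partial_t f \,\rd\bx\rd\bv .
\]
Here $\partial_t f$ comes from differentiating $\frac{|\bv|^2}{2}f+\phi f+f\log f$ in $t$, and the extra constant $1$ contributes $\int \partial_t f=0$ by mass conservation. Substituting \eqref{VFP1}, the integral splits into a conservative term and a dissipative term.

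For each term I will integrate by parts in $(\bx,\bv)$. Boundary contributions vanish by periodicity in $\bx$, or by decay at infinity, and by the decay of $f$ and its fluxes as $|\bv|\to\infty$. This yields
\[
\frac{d}{dt}\energy[f] = -\int (\nabla_{\bx,\bv}\mu, \Jmat\nabla_{\bx,\bv}\mu) f \,\rd\bx\rd\bv - \int (\nabla_{\bx,\bv}\mu, \Dmat\nabla_{\bx,\bv}\mu) f\,\rd\bx\rd\bv .
\]
Since $\Jmat$ is antisymmetric, $(\xi,\Jmat\xi)=0$ pointwise for every $\xi\in\R^{2d}$. So the conservative term vanishes identically, which is exactly its Hamiltonian character. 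The remaining term is \eqref{0414}. Because $\Dmat$ is positive semidefinite and $f\ge 0$, the integrand $(\nabla\mu,\Dmat\nabla\mu)f=\epsilon f|\bv+\nabla_\bv\log f|^2$ is nonnegative, and the sign follows.

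The computation itself is routine. The main obstacle is justifying the integrations by parts and the differentiation under the integral sign, and keeping track of the $\log f$ term where $f$ may vanish. I will handle this by assuming enough regularity on the solution: $f>0$ smooth with sufficient decay in $\bv$, so that $f|\bv|^2$, $f\log f$ and the fluxes $f\nabla\mu$ are integrable and the boundary terms drop. Alternatively, I could note that the flux $f\nabla_{\bx,\bv}\mu$ equals $\nabla f + f\nabla(\frac{|\bv|^2}{2}+\phi)$, which contains no logarithm. With that, the dissipation is written as $\epsilon\int |\bv f+\nabla_\bv f|^2/f$, which is well defined (possibly $+\infty$) wherever $f>0$.
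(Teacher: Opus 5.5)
Your proposal is correct and follows essentially the same route as the paper's proof: you differentiate $\energy$ through its first variation, substitute the conservative--dissipative form \eqref{VFP1}, integrate by parts, and use $(\xi,\Jmat\xi)=0$ to remove the conservative term, which leaves the $\Dmat$-weighted dissipation. Your extra remarks on regularity, on why the boundary terms vanish, and on writing the flux without a logarithm go beyond what the paper states, but they do not change the argument.
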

\begin{proof}
The proof is by a direct computation, with an emphasis on the semi-positive definite matrix $\Dmat$ and the symplectic matrix $\Jmat$. 
\begin{equation*}
\begin{split}
   \frac{d}{dt}\energy[f]=& \int_{\RR^d}\int_{\Omega_\bx} \partial_t f\cdot \frac{\delta}{\delta f}\energy \rd\bx \rd\bv\\
=&\int_{\RR^d}\int_{\Omega_\bx}\Big(\nabla_{\bx,\bv} \cdot \left[  f \Jmat \nabla_{\bx,\bv} \frac{\delta \energy}{\delta f}\right]+\nabla_{\bx,\bv} \cdot \left[  f \Dmat
\nabla_{\bx,\bv} \frac{\delta \energy}{\delta f}\right]\Big)\cdot \frac{\delta}{\delta f}\energy \rd\bx \rd\bv\\
=&-\int_{\RR^d}\int_{\Omega_\bx}
\Big((\nabla_{\bx,\bv}\frac{\delta}{\delta f}\energy, \Jmat \nabla_{\bx,\bv}\frac{\delta}{\delta f}\energy)+(\nabla_{\bx,\bv}\frac{\delta}{\delta f}\energy, \Dmat \nabla_{\bx,\bv}\frac{\delta}{\delta f}\energy)\Big) f \rd\bx \rd \bv\\
=&-\int_{\RR^d}\int_{\Omega_\bx}
(\nabla_{\bx,\bv}\frac{\delta}{\delta f}\energy, \Dmat \nabla_{\bx,\bv}\frac{\delta}{\delta f}\energy) f \rd\bx \rd \bv\leq 0\,, 
\end{split}
\end{equation*}
where we use the fact that 
$\Jmat$ is a symplectic matrix such that $u^{T}\Jmat u=0$, for any vector $u\in\mathbb{R}^{2d}$. This finishes the proof. 
\end{proof}

We note that equation \eqref{VFP1} is {\it not} a gradient flow due to the presence of the anti-symmetric matrix $J$. Nevertheless, in the following sections, we will introduce a variational formulation of \eqref{VFP1} that maintains the associated free energy dissipation property \eqref{0414}.

\subsection{Nonlinear case}
Consider the Vlasov-Poisson-Fokker-Planck system: 
\begin{equation} \label{VPFP}
\begin{dcases}
    & \partial_t f + \bv \cdot \nabla_\bx f - \nabla_\bx \phi \cdot \nabla_\bv f = \epsilon \nabla_\bv \cdot (\bv f + T_0\nabla_\bv f)\,, \quad \bx \in \Omega \subset \RR^{d_x}\,,
     \\ & -\Delta_\bx \phi = \rho(t, \bx) - h\,, \qquad \rho(t,\bx) = \int_{\RR^d} f\rd \bv\,.
\end{dcases}
\end{equation}
This equation is commonly used to model a single-species plasma, describing the motion of electrons in a static ion background, where $h$ is a constant representing the background charge. It satisfies the global neutrality condition
\[
\int_{\Omega_\bx} h  \rd\bx = \int_{\Omega_\bx} \rho(t,\bx) \rd\bx\,.
\]
Here, $T_0$ is a given background temperature (consider the ion as a steady thermal bath). 

Compared to \eqref{VFP0}, the main difference here is that $\phi$ is not given apriori, but obtained self-consistently through the Poisson equation. In theory, one can obtain it using the Green's function $G$, such that
\begin{align}
    \phi(\bx) = \int_{\Omega_\bx} G(\bx-\by) (\rho(t,\by) - h) \rd \by\,. 
\end{align}
Similar to Proposition~\ref{thm-diss}, this nonlinear system \eqref{VPFP} also satisfies free energy dissipation. 

\begin{proposition}
    For equation \eqref{VPFP}, define the Lyapunov functional: 
    \begin{align*}
        \energy[f] := \int_{\RR^d}\int_{\Omega_\bx} \left( \half|\bv|^2 f + T_0f \log f \right) \rd \bx \rd \bv + \half \int_{\Omega_\bx}|\nabla_\bx \phi|^2  \rd \bx \,. 
    \end{align*}
Then $\frac{\rd}{\rd t} \energy[f](t) \leq 0$.
\end{proposition}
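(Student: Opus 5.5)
A direct computation, as in Proposition~\ref{thm-diss}: differentiate $\energy[f(t)]$ in time, substitute the equation, and integrate by parts so that the transport terms cancel. The only new feature is the self-consistent field energy $\half\int|\nabla_\bx\phi|^2\rd\bx$. Its variation must reproduce the potential $\phi$, so that the nonlinear system fits the structure \eqref{VFP1} with $\frac{\delta\energy}{\delta f}=\frac{|\bv|^2}{2}+\phi+T_0(\log f+1)$.

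\textbf{Step 1: the field energy.} Differentiate the field energy and use the Poisson equation $-\Delta_\bx\partial_t\phi=\partial_t\rho$ (recall $h$ is constant). With periodic boundary conditions or sufficient decay, one integration by parts in $\bx$ gives
\begin{equation*}
\frac{\rd}{\rd t}\half\int_{\Omega_\bx}|\nabla_\bx\phi|^2\rd\bx=\int_{\Omega_\bx}\phi\,\partial_t\rho\,\rd\bx=\int_{\RR^d}\int_{\Omega_\bx}\phi(\bx)\,\partial_t f\,\rd\bx\rd\bv.
\end{equation*}
On a periodic domain, $\phi$ is determined only up to a constant, and neutrality is what makes $\phi$ well defined. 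Any additive constant drops out of the last integral, since total mass is conserved.

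\textbf{Step 2: the full derivative.} Combining Step~1 with the kinetic and entropy parts gives
\begin{equation*}
\frac{\rd}{\rd t}\energy=\int\!\!\int \Big(\frac{|\bv|^2}{2}+\phi+T_0(\log f+1)\Big)\partial_t f\,\rd\bx\rd\bv.
\end{equation*}
Now substitute $\partial_t f=-\bv\cdot\nabla_\bx f+\nabla_\bx\phi\cdot\nabla_\bv f+\epsilon\nabla_\bv\cdot(\bv f+T_0\nabla_\bv f)$.

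\textbf{Step 3: the transport terms vanish.} The free-transport term is a divergence in $\bx$, since $\bv\cdot\nabla_\bx f=\nabla_\bx\cdot(\bv f)$, and the force term is a divergence in $\bv$, since $\nabla_\bx\phi\cdot\nabla_\bv f=\nabla_\bv\cdot(f\nabla_\bx\phi)$. Integrating by parts, their combined contribution is
\begin{equation*}
\int\!\!\int f\Big[\bv\cdot\nabla_\bx\Big(\phi+T_0\log f\Big)-\nabla_\bx\phi\cdot\Big(\bv+T_0\nabla_\bv\log f\Big)\Big]\rd\bx\rd\bv .
\end{equation*}
The $\bv\cdot\nabla_\bx\phi$ terms cancel exactly; this is the antisymmetry of $\Jmat$, as in Proposition~\ref{thm-diss}. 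The remaining terms $T_0\int\!\!\int\bv\cdot\nabla_\bx f$ and $T_0\int\!\!\int\nabla_\bx\phi\cdot\nabla_\bv f$ are full derivatives, in $\bx$ and in $\bv$ respectively, so they integrate to zero.

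\textbf{Step 4: dissipation.} Write the collision term as $\epsilon\nabla_\bv\cdot\big(f\nabla_\bv(\frac{|\bv|^2}{2}+T_0\log f)\big)$. Integrating by parts against $\frac{\delta\energy}{\delta f}$, whose $\bv$-gradient is $\bv+T_0\nabla_\bv\log f$, gives
\begin{equation*}
\frac{\rd}{\rd t}\energy=-\epsilon\int\!\!\int f\,\big|\bv+T_0\nabla_\bv\log f\big|^2\rd\bx\rd\bv\le 0.
\end{equation*}

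The main obstacle is Step~1, namely justifying that $\frac{\delta}{\delta f}$ of the field energy equals $\phi$. This needs the Poisson equation differentiated in time, the neutrality/zero-mean normalization of $\phi$, and boundary terms that vanish. I will assume periodic $\bx$ (or decay at infinity) and enough smoothness and decay of $f$ in $\bv$ to drop all boundary terms.
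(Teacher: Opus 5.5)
Your proof is correct and takes essentially the paper's route: identify the first variation of the field energy as $\phi$, so that the system has the conservative–dissipative form \eqref{VFP1} with $\frac{\delta\energy}{\delta f}=\frac{|\bv|^2}{2}+\phi+T_0(\log f+1)$, and then redo the computation of Proposition~\ref{thm-diss} (which you write out explicitly). The only difference is in the key step: you get $\frac{\rd}{\rd t}\half\int|\nabla_\bx\phi|^2\rd\bx=\iint\phi\,\partial_t f\,\rd\bx\rd\bv$ by differentiating the Poisson equation in time, whereas the paper writes the field energy as the quadratic form $\half\iint(\rho-h)G(\rho-h)$ and takes its variation $G\ast(\rho-h)=\phi$; your version is slightly more self-contained, since it needs neither the symmetry of $G$ nor a Green's function and it handles the additive constant in $\phi$ explicitly.
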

\begin{proof}
Note first that
\begin{align*}
    \half \int_{\Omega_\bx} |\nabla_\bx \phi|^2 \rd \bx &= -\half \int_{\Omega_\bx} \phi \Delta_\bx \phi  \rd \bx 
    = \half \int_{\Omega_\bx} \phi(\rho- h) \rd \bx 
    \\ & = \half \int_{\Omega_\bx} \int_{\Omega_\bx} (\rho(t,\bx)-h) G(\bx-\by)(\rho(t,\by)-h) \rd \bx \rd \by \,.
\end{align*}
Then \eqref{VPFP} can also be written in the form of \eqref{VFP1}, with 
\[
\frac{\delta }{\delta f} \left[ \half \int_{\Omega_\bx} \norm{\nabla_\bx \phi}^2 \rd \bx \right] = G \ast (\rho - h) = \phi(\bx)\,.
\]
The rest proof follow the same argument as in Proposition~\ref{thm-diss}. 
\end{proof}
  

We note a closely related variant of \eqref{VPFP}, the Vlasov–Ampere–Fokker–Planck system, which is given by
\begin{equation} \label{VAFP}
\begin{dcases}
    & \partial_t f + \bv \cdot \nabla_\bx f + \bE \cdot \nabla_\bv f = \epsilon \nabla_\bv \cdot (\bv f + T_0\nabla_\bv f)\,,
     \\ & \partial_t \bE(t,x) = -\bJ(t,\bx)\,, \qquad \bJ(t,\bx) = \int_{\RR^d}  \bv f\rd \bv\,.
\end{dcases}
\end{equation}
One can readily show that if the initial electric field $\bE(0,\bx)$ satisfies 
\begin{align*} 
    \nabla_\bx \cdot \bE(0,\bx) = \rho(0,\bx) - h \,,
\end{align*}
then, since $\bE$ evolves according to \eqref{VAFP}, this relation is preserved for all time:
\[
\nabla_\bx \cdot \bE(t,\bx) = \rho(t,\bx) - h\,.
\]
Indeed, integrating the $f$ equation in \eqref{VAFP} with respect to 
$\bv$ yields the continuity equation $\partial_t \rho = - \nabla_{\bx}\cdot\bJ$. Taking the divergence of the evolution equation for $\bE$ then gives  $\partial_t \nabla_\bx \cdot\bE = \partial_t \rho$. Since $\nabla_\bx \cdot\bE$ and $\rho- h$ satisfy the same evolution equation and agree initially, they coincide for all $t\geq 0$.
If we write $\bE = - \nabla_\bx \phi$, then we recover the Poisson equation.  Consequently, it also satisfies the same free energy dissipation property. 

In fact, the Vlasov–Ampère–Fokker–Planck system also enjoys the following dissipation property for a general initial condition $\bE(0,x)$.
\begin{proposition}
    For equation \eqref{VAFP}, define the energy functional
    \begin{align*}
        \energy[f] := \int_{\RR^d}\int_{\Omega_\bx} \left( \half|\bv|^2 f + T_0f \log f  \right) \rd \bx \rd \bv + \half \int_{\Omega_\bx}  |\bE(t,x)|^2  \rd \bx \,, 
    \end{align*}
    then    
    \begin{equation*}            
    \frac{\rd}{\rd t} \energy[f] =-\epsilon\int_{\RR^d} \!\int_{\Omega_\bx} \|T_0\nabla_{\bv}\log \frac{f}{e^{-\frac{\|\bv\|^2}{2T_0}}}\|^2 f \rd\bx \rd\bv\leq 0\,. 
    \end{equation*}
\end{proposition}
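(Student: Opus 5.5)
The plan is to differentiate the three pieces of $\energy[f]$ separately along solutions of \eqref{VAFP}, and to check that the Vlasov transport terms and the electric-field term cancel. The collision term should then leave exactly the claimed dissipation. Throughout, I integrate by parts in $\bx$ (periodic boundary conditions) and in $\bv$ (assuming enough decay of $f$ as $|\bv|\to\infty$), and I use mass conservation $\int\!\!\int \partial_t f \rd\bx\rd\bv=0$, so that constants in the first variation can be dropped.

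First I write
\[
\frac{\rd}{\rd t}\int\!\!\int \Big(\half|\bv|^2 f + T_0 f\log f\Big)\rd\bx\rd\bv=\int\!\!\int \partial_t f\,\Big(\half|\bv|^2+T_0\log f\Big)\rd\bx\rd\bv,
\]
and substitute $\partial_t f = -\bv\cdot\nabla_\bx f - \bE\cdot\nabla_\bv f + \epsilon\nabla_\bv\cdot(\bv f+T_0\nabla_\bv f)$. I then handle each piece in turn.

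\textbf{Free-streaming term.} Integrating $-\bv\cdot\nabla_\bx f$ against $\half|\bv|^2$ gives zero, since $|\bv|^2$ does not depend on $\bx$ and we can integrate by parts in $\bx$. Integrating it against $T_0\log f$ also gives zero, because $\bv\cdot\nabla_\bx f\,\log f=\bv\cdot\nabla_\bx(f\log f - f)$ is a total $\bx$-derivative.

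\textbf{Field term.} Against $T_0\log f$, the integrand $\bE\cdot\nabla_\bv f\,\log f$ is a total $\bv$-derivative, because $\bE$ does not depend on $\bv$; so this vanishes. Against $\half|\bv|^2$, integrating by parts in $\bv$ gives
\[
-\int\!\!\int \bE\cdot\nabla_\bv f\,\half|\bv|^2 \rd\bx\rd\bv=\int\!\!\int \bE\cdot\bv\, f\rd\bx\rd\bv=\int_{\Omega_\bx}\bE\cdot\bJ\rd\bx .
\]
By the Ampère equation,
\[
\frac{\rd}{\rd t}\,\half\int_{\Omega_\bx}|\bE|^2\rd\bx=\int_{\Omega_\bx}\bE\cdot\partial_t\bE\rd\bx=-\int_{\Omega_\bx}\bE\cdot\bJ\rd\bx,
\]
which cancels the previous term exactly. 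This cancellation is the conservative (Hamiltonian) part. It is the step where the sign convention $+\bE\cdot\nabla_\bv f$ together with $\partial_t\bE=-\bJ$ matters, so I will double-check that sign. Note that no Gauss-law constraint is needed here, which is why the statement holds for general $\bE(0,\bx)$.

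\textbf{Collision term.} Writing $\bv f+T_0\nabla_\bv f=T_0 f\nabla_\bv\big(\log f+\tfrac{|\bv|^2}{2T_0}\big)=T_0 f\nabla_\bv\log\frac{f}{e^{-|\bv|^2/(2T_0)}}$, and observing that $\half|\bv|^2+T_0\log f=T_0\log\frac{f}{e^{-|\bv|^2/(2T_0)}}$, one integration by parts in $\bv$ gives
\[
-\epsilon\int\!\!\int T_0 f\,\Big|\nabla_\bv\log\frac{f}{e^{-\|\bv\|^2/(2T_0)}}\Big|^2\, T_0\rd\bx\rd\bv ,
\]
which is precisely the stated formula $-\epsilon\int\!\!\int\|T_0\nabla_\bv\log(\cdot)\|^2 f$ and is manifestly nonpositive.

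I expect the main obstacle to be bookkeeping, namely matching the signs and the factors of $T_0$ and $\epsilon$ in the final expression. The analytic issues, such as justifying the integrations by parts and the decay of $f$, I will take for granted at the formal level, consistent with the earlier propositions.
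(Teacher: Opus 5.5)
Your proposal is correct and is essentially the paper's own direct computation. You differentiate $\energy$ along \eqref{VAFP}, show that the transport terms tested against $T_0\log f$ vanish, cancel the kinetic-energy contribution $\int \bE\cdot\bJ$ against the Amp\`ere term, and integrate the collision term by parts to obtain the dissipation. The differences are only cosmetic. You drop the constant $T_0$ in the first variation by mass conservation. You also treat the entropy transport terms as total derivatives of $f\log f - f$, whereas the paper uses $f\nabla\log f=\nabla f$ followed by a second integration by parts.
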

\begin{proof}
Let $(f, \bE)$ be the solution to \eqref{VAFP}. Then we can rewrite it as:
\begin{equation*}
 \partial_t f 
 = \epsilon T_0\nabla_\bv \cdot \left( f  \nabla_\bv \log\frac{f}{e^{-\frac{|\bv|^2}{2T_0}}}\right)-\nabla_{\bx,\bv}\cdot\left(f\begin{pmatrix}
  \bv\\ \bE(t,x)
 \end{pmatrix}\right)\,, 
\end{equation*}
where we used $f\nabla_{\bv}\log f=\nabla_{\bv} f$. Then we have 
\begin{equation*}
   \begin{split}
\frac{d}{dt}  \energy[f] =&\int_{\RR^d}\!\int_{\Omega_{\bx}} (T_0 \log f+T_0+\frac{1}{2}|\bv|^2)\cdot \partial_tf \rd\bx\rd\bv +\int_{\Omega_\bx} \bE \cdot \partial_t \bE \rd\bx  \\
=&- \epsilon \int_{\RR^d}\!\int_{\Omega_{\bx}} \| T_0 \nabla_\bv \log\frac{f}{e^{-\frac{|\bv|^2}{2T_0}}}\|^2 f \rd\bx\rd\bv \\
&+\int_{\RR^d}\!\int_{\Omega_{\bx}} \Big(T_0 \nabla_\bx\log f\cdot \bv+ T_0 \nabla_\bv\log f\cdot \bE+\bv \cdot \bE \Big) f \rd\bx\rd\bv-\int \int  \bE\cdot \bv f\rd\bx\rd\bv\\
=&- \epsilon\int_{\RR^d}\!\int_{\Omega_{\bx}} \| T_0 \nabla_\bv \log\frac{f}{e^{-\frac{|\bv|^2}{2T_0}}}\|^2 f \rd\bx\rd\bv\,. 
   \end{split}
\end{equation*}
In the last equality of the above formula, we use the fact that
\begin{equation*}
\begin{split}
&   \int_{\RR^d}\!\int_{\Omega_{\bx}} \Big(T_0 \nabla_\bx\log f\cdot \bv+ T_0 \nabla_\bv\log f\cdot \bE\Big) f \rd\bx\rd\bv\\
= & \int_{\RR^d}\!\int_{\Omega_{\bx}} \Big(T_0 \nabla_\bx f\cdot \bv+ T_0 \nabla_\bv f\cdot \bE\Big) \rd\bx\rd\bv= -\int_{\RR^d}\!\int_{\Omega_{\bx}} \Big(T_0  f \nabla_\bx\cdot\bv+ T_0  f\nabla_\bv\cdot \bE\Big) \rd\bx\rd\bv
=0\,.
\end{split}
\end{equation*}
This finishes the proof. 
\end{proof}

\section{Generalized JKO formulation and particle method}\label{sec3}
In this section, we propose the generalized JKO formulation for kinetic Fokker-Planck equations. We then apply a variational particle method in which the velocity is constructed using the neural ODE method.  
\subsection{Generalized JKO formulation} 
We propose the following generalized dynamical JKO formulation of \eqref{VFP0} or \eqref{VFP1}. Given $f^n(\bx, \bv)$ at time $t = t^n:= n \Delta t$, we obtain $f^{n+1}(\bx,\bv)$ as follows:
\begin{equation} \label{JKO0}
    \begin{dcases}
    \min_{f, \bu} \frac{1}{2\Delta t} \int_0^1 \int_{\RR^d}\!\int_{\Omega_{\bx}} (|\bu|^2 f)(\tau, \bx, \bv) \rd \bx \rd \bv \rd \tau  + \epsilon \mathcal{E}[f(1, \cdot, \cdot)]\,,
    \\ \hspace{3cm} \mathcal{E}[f] :=\int_{\RR^d}\!\int_{\Omega_{\bx}}
    [\frac{|\bv|^2}{2} + \phi(\bx)] f + f \log f \rd \bx \rd \bv\,,
    \\ \text{s.t.} \quad  \partial_\tau f + \Delta t \bv \cdot \nabla_\bx f - \Delta t\nabla_\bx \phi \cdot \nabla_\bv f +  \nabla_\bv \cdot (\bu f) = 0, \quad f(0, \bx, \bv) = f^n(\bx, \bv)\,.
    \end{dcases}
\end{equation}
Compared to the vanilla dynamical JKO for Wasserstein gradient flow \eqref{classicalJKO}, the main difference lies in the additional term $ \Delta t (\bv \cdot \nabla_\bx f - \nabla_\bx \phi \cdot \nabla_\bv f) $ in the constraint PDE. This term arises from the conservative part of the PDE, specifically the term involving $\Jmat$ in \eqref{VFP1}. This is intuitive because the conservative part is what prevents the original equation from being a gradient flow.  

We also note that if we only consider the dissipative term in \eqref{VFP1}, the associated energy should be $\int_{\RR^d}\!\int_{\Omega_{\bx}}
    \frac{|\bv|^2}{2}  f + f \log f \rd \bx \rd \bv$. However, it is important to retain the full energy in the formulation. Including the full energy does not alter the optimality condition, but it is crucial for ensuring that the algorithm preserves the energy dissipation property stated in the following proposition.

\begin{proposition}[Free energy dissipation for \eqref{JKO0}] \label{prop: energy dissipation}
Let $f^{n+1}(\bx,\bv)$ be the solution to \eqref{JKO0}, then we have 
\[
\mathcal{E}[f^{n+1}] \leq \mathcal{E}[f^n]\,,
\]
where $\energy$ is defined in \eqref{energy0}.
\end{proposition}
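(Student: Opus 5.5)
The plan is the standard comparison argument for JKO-type schemes: test the minimization problem \eqref{JKO0} against the admissible competitor with zero control, and then show that the free energy $\energy$ is exactly conserved along the resulting pure transport.

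First, choose $\bu \equiv 0$ on $[0,1]$. The constraint then reduces to the pure Vlasov transport
\[
\partial_\tau \tilde f + \Delta t\, \bv\cdot\nabla_\bx \tilde f - \Delta t\, \nabla_\bx\phi\cdot\nabla_\bv \tilde f = 0, \qquad \tilde f(0)=f^n .
\]
This is a linear transport equation along the Hamiltonian flow $\dot\bx = \Delta t\,\bv$, $\dot\bv = -\Delta t\,\nabla_\bx\phi$. I will assume $\phi$ is regular enough, say $\nabla\phi$ Lipschitz, for this flow to be well defined, so the pair $(\tilde f, 0)$ is admissible. Its kinetic cost vanishes. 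Optimality of $(f,\bu)$ with $f(1)=f^{n+1}$ therefore gives
\[
\epsilon\,\energy[f^{n+1}] \le \frac{1}{2\Delta t}\int_0^1\!\!\int\!\!\int |\bu|^2 f + \epsilon\,\energy[f^{n+1}] \le \epsilon\,\energy[\tilde f(1)] .
\]

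Second, show $\energy[\tilde f(1)] = \energy[f^n]$. Differentiating in $\tau$ gives
\[
\frac{d}{d\tau}\energy[\tilde f] = \int \frac{\delta\energy}{\delta f}\,\partial_\tau\tilde f .
\]
The transport term can be written as $\partial_\tau \tilde f = \Delta t\, \nabla_{\bx,\bv}\cdot[\tilde f\, \Jmat \nabla_{\bx,\bv}\frac{\delta\energy}{\delta f}]$. This identity is the same computation as for the conservative part in \eqref{VFP1}, with the $\log f$ contributions canceling because $\nabla\cdot(\Jmat\nabla f)=0$. Integrating by parts then gives
\[
\frac{d}{d\tau}\energy[\tilde f] = -\Delta t\int (\nabla \tfrac{\delta\energy}{\delta f}, \Jmat\nabla\tfrac{\delta\energy}{\delta f})\,\tilde f = 0 ,
\]
by antisymmetry of $\Jmat$, exactly as in the proof of Proposition~\ref{thm-diss}.

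The same conclusion can be reached more directly. The Hamiltonian vector field is divergence free, so the entropy $\int \tilde f\log \tilde f$ is preserved by Liouville's theorem. The term $\int(\frac{|\bv|^2}{2}+\phi)\tilde f$ is preserved because $H=\frac{|\bv|^2}{2}+\phi$ is constant along characteristics.

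Combining the two steps and dividing by $\epsilon>0$ yields $\energy[f^{n+1}]\le\energy[f^n]$. The main obstacle is justifying the integrations by parts and the existence and admissibility of the competitor. This needs periodic boundary conditions in $\bx$ or decay at infinity, finiteness of $\energy[f^n]$, and sufficient regularity of $\phi$. I would handle these by arguing at the level of characteristics (measure-preserving flow map), which avoids differentiating $\tilde f$.
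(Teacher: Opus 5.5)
Your proposal is correct and takes the same route as the paper. You compare with the zero-control competitor $\tilde f$ solving the pure Vlasov transport, then show that $\energy$ is conserved along it; the paper does the conservation step by splitting $\energy$ into the Hamiltonian part, conserved directly, and the entropy part, handled via the antisymmetry of $\Jmat$ and $f\nabla\log f=\nabla f$, which is equivalent to either your computation or your Liouville argument, and your explicit use of the nonnegativity of the kinetic cost followed by division by $\epsilon>0$ makes precise a step the paper leaves implicit.
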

\begin{proof}
    Note first that 
    \[
    \mathcal{E}[f^{n+1}] \leq \energy [\tilde f]\,,
    \]
    where $\tilde f$ is obtained by solving
    \[
    \partial_\tau \tilde f + \Delta t \bv \cdot \nabla_\bx \tilde f - \Delta t\nabla_\bx \phi \cdot \nabla_\bv \tilde f  = 0\,, \quad \tilde f(0, \bx, \bv) = f^n(\bx, \bv)\,.
    \]
    Now we claim that $\energy[\tilde f] = \mathcal{E}[f^n]$. To establish this, we need to show that the energy $\mathcal{E}[f]$ is conserved along the flow:
    \begin{align} \label{feqn}
        \partial_t f + \bv \cdot \nabla_\bx f - \nabla_\bx \phi \cdot \nabla_\bv f =0\,. 
    \end{align}
    Let
    \begin{align*}
    H[f]:=\int_{\RR^d}\!\int_{\Omega_{\bx}} [\frac{|\bv|^2}{2} + \phi(\bx)] f \rd \bx \rd \bv  \,, \quad H_0[f] = \int_{\RR^d}\!\int_{\Omega_{\bx}} f \log f \rd \bx\rd \bv \,,
    \end{align*}
    then $\mathcal{E}[f] = H[f] + H_0[f] $. First, it is direct to check that $\frac{\rd}{\rd t}H[f] = 0$ for $f$ satisfying \eqref{feqn}. To check $H_0[f]$, we see that
    \begin{align*}
        \frac{\rd H_0}{\rd t} &= \int \frac{\delta H_0}{\delta f} \nabla_{\bx, \bv} \cdot \left( f \Jmat \nabla_{\bx, \bv} \frac{\delta E}{\delta f}\right)\rd\bx\rd\bv
        = \int \frac{\delta E}{\delta f} \nabla_{\bx, \bv} \cdot \left( f \Jmat \nabla_{\bx, \bv} \frac{\delta H_0}{\delta f}\right)\rd\bx\rd\bv
        \\ & = \int \frac{\delta E}{\delta f} \nabla_{\bx, \bv} \cdot \left(  \Jmat \nabla_{\bx, \bv} f \right)\rd\bx\rd\bv = 0\,,
    \end{align*}
    where the last equality is due to $f\nabla_{\bx,\bv}\log f=\nabla_{\bx,\bv}f$ and the anti-symmetry property of matrix $\Jmat$. 
\end{proof}

Now we translate \eqref{JKO0} using the flow mapping function. Denote 
\[
\bT(\tau, \bx^n,\bv^n)=(\bx(\tau; \bx^n), \bv(\tau; \bv^n))^\top\,,
\]
where $\bx(\tau, \bx^n )$, $\bv(\tau; \bx^n)$ are solutions to the following ODE system 
\begin{equation*}
\begin{aligned}
    ~~\frac{\rd}{\rd \tau} \bx(\tau) &= \Delta t \bv(\tau)\,, & \bx(0) = \bx^0\,,
    \\ \qquad \frac{\rd}{\rd \tau} \bv(\tau) &= - \Delta t \nabla_\bx \phi(\bx(\tau)) +  \bu (\tau,\bx(\tau),\bv(\tau))\,,  & \bv(0) = \bv^0\,.\\ 
    \end{aligned}
\end{equation*}

We state the following proposition, which we term kinetic neural ODEs. It may be regarded as a generalization of the neural ODE framework developed in \cite{lee2024deep}.

\begin{proposition}[Kinetic neural ODEs]
The following equation holds:
\begin{align} \label{push2}
\frac{\rd}{\rd \tau} \log |\det \nabla_{\bx,\bv} \bT(\tau, \bx, \bv)|=   \nabla_{\bv(\tau)}\cdot \bu(\tau,  \bT(\tau,\bx, \bv))\,.
\end{align}
\end{proposition}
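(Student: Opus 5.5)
We will apply Liouville's formula (Jacobi's formula for the determinant of the flow map's Jacobian) to the augmented ODE in phase space. Write $\bT(\tau,\bx,\bv)$ as the flow of the time-dependent vector field on $\RR^{2d}$
\[
\bS(\tau,\bx,\bv) := \begin{pmatrix} \Delta t\, \bv \\ -\Delta t \nabla_\bx \phi(\bx) + \bu(\tau,\bx,\bv) \end{pmatrix},
\]
so that $\frac{\rd}{\rd \tau}\bT = \bS(\tau,\bT)$ and $\bT(0,\bx,\bv)=(\bx,\bv)$. Differentiating in the initial data $(\bx,\bv)$ and exchanging derivatives (for $\bu,\phi$ smooth enough, e.g.\ $C^1$ with the flow well defined, as is the case for neural network parameterizations with smooth activations) yields the variational equation
\[
\frac{\rd}{\rd \tau} \nabla_{\bx,\bv}\bT(\tau,\bx,\bv) = \nabla \bS(\tau,\bT(\tau,\bx,\bv))\, \nabla_{\bx,\bv}\bT(\tau,\bx,\bv), \qquad \nabla_{\bx,\bv}\bT(0)=\Imat_{2d}.
\]

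Next, we invoke Jacobi's formula $\frac{\rd}{\rd\tau}\det M = \det M \,\mathrm{tr}(M^{-1}\dot M)$ with $M=\nabla_{\bx,\bv}\bT$. This gives $\frac{\rd}{\rd\tau}\log|\det \nabla_{\bx,\bv}\bT| = \mathrm{tr}\,\nabla\bS(\tau,\bT) = \nabla_{\bx,\bv}\cdot \bS$ evaluated along the trajectory. We note that $\det\nabla_{\bx,\bv}\bT$ never vanishes: it equals $\exp\int_0^\tau \mathrm{tr}\nabla\bS$, and it starts at $1$. Hence the logarithm is well defined.

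The key remaining step is computing the divergence, which is where the structure of the conservative part enters. We have
\[
\nabla_{\bx,\bv}\cdot\bS = \Delta t\,\nabla_\bx\cdot \bv + \nabla_\bv\cdot\big(-\Delta t\nabla_\bx\phi(\bx)\big) + \nabla_\bv\cdot \bu .
\]
The first term vanishes because $\bv$ does not depend on $\bx$. The second vanishes because $\nabla_\bx\phi$ does not depend on $\bv$. In other words, the Hamiltonian field $\Delta t\,\Jmat\nabla_{\bx,\bv}(\frac{|\bv|^2}{2}+\phi)$ is divergence free, consistent with $u^\top \Jmat u=0$ used earlier. Only $\nabla_\bv\cdot\bu$ evaluated at $\bT(\tau,\bx,\bv)$ remains, which is \eqref{push2}.

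The main point requiring care is the justification of the variational equation and of Jacobi's formula: namely, enough regularity of $\bu$ and $\phi$ for differentiability of the flow with respect to the initial data. In the nonlinear (Poisson) case, $\phi$ is frozen within one JKO step, so the same computation applies.
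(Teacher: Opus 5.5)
Your proof is correct and is essentially the paper's argument: Jacobi's formula for $\log|\det\nabla_{\bx,\bv}\bT|$, then the chain rule (your variational equation) to turn the trace into the phase-space divergence of the full vector field along the trajectory, and finally the fact that the Hamiltonian part $(\Delta t\,\bv,\,-\Delta t\nabla_\bx\phi(\bx))$ is divergence free, so only $\nabla_\bv\cdot\bu$ survives. You are slightly more careful than the paper, which simply assumes $\det\nabla_{\bx,\bv}\bT\neq 0$, whereas you derive this from Liouville's formula (valid in its adjugate form without presupposing invertibility), and you spell out the divergence computation that the paper attributes only to ``the chain rule.''
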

\begin{proof}
Equation \eqref{push2} is derived from the Monge-Ampere equation. 
Assume that $\bT(\tau, \bx, \bv)$ is invertible, i.e. $|\mathrm{det}\nabla_{\bx,\bv} \bT(\tau, \bx, \bv)|\neq 0$.
Denote 
\[
\xi(\bx(\tau), \bv(\tau)) :=\frac{\rd}{\rd \tau}\bT(\tau, \bx,\bv)= (\bv(\tau) \Delta t, -\nabla_{\bx} \phi(\bx(\tau)) \Delta t +  \bu(\tau, \bx(\tau), \bv(\tau)))\,.
\]
Then
\begin{equation*}
\begin{split}
\frac{\rd}{\rd \tau} \log \det |\nabla_{\bx,\bv} \bT(\tau, \bx,\bv)| =&\mathrm{tr}\Big((\nabla_{\bx,\bv} \bT(\tau, \bx,\bv))^{-1}\frac{\rd}{\rd \tau}\nabla_{\bx,\bv} \bT(\tau,\bx,\bv)\Big)\\
 =&  \mathrm{tr}\Big((\nabla_{\bx,\bv} \bT(\tau, \bx,\bv))^{-1}\nabla_{\bx, \bv}\frac{\rd }{\rd \tau} \bT(\tau,\bx,\bv)\Big)\\
  =&  \mathrm{tr}\Big((\nabla_{\bx,\bv} \bT(\tau, \bx,\bv))^{-1} \nabla_{\bx,\bv} \xi(\bx(\tau), \bv(\tau))
  \Big)\\
  =& \nabla_{{\bx(\tau),\bv(\tau)}} \cdot \xi(\bT(\tau, \bx, \bv))\\
  =&\nabla_{{\bv(\tau)}}\cdot \bu(\tau, \bT(\tau, \bx, \bv))\,,
\end{split}
\end{equation*}
where the first equality uses the Jacobi identity, and the second last equality uses the chain rule. 
\end{proof}
We approximate $\bu$ by $\bu_\theta$ with $\theta \in \Theta$ being the parameters. We assume that the family $\{\bu_\theta\}_{\theta \in \Theta}$ is sufficiently expressive to approximate a broad class of vector fields on $\mathbb{R}^{d} \times \mathbb{R}^d$.
Since the inner time is discretized using a single time step, we employ a fully connected neural network that takes $(\bx,\bv) \in \mathbb{R}^{d} \times \mathbb{R}^d$ as input and outputs a vector in $\mathbb{R}^d$. A typical architecture has the form
\[
\bu_\theta(t,\bx,\bv)
=
W_L \sigma\!\left(
W_{L-1} \sigma\!\left(
\cdots
\sigma\!\left(W_1 (\bx,\bv) + b_1\right)
\cdots
\right) + b_{L-1}
\right) + b_L\,,
\]
where $L$ denotes the number of layers, $W_k$ and $b_k$ are trainable weight matrices and bias vectors, and $\sigma(z) = \max(\alpha z, z)$ is the \texttt{LeakyReLU} activation function with slope parameter $\alpha>0$. Let $m_0 = 2d+1$ denote the input dimension, corresponding to the concatenated variable $(\bx,\bv)$. Let $m_1,\dots,m_{L-1}$ denote the hidden layer widths and $m_L=d$ the output dimension. Then the network parameters satisfy
\[
W_k \in \mathbb{R}^{m_k \times m_{k-1}}\,, 
\qquad 
b_k \in \mathbb{R}^{m_k}\,,
\qquad 
k=1,\dots,L\,.
\]
The parameter vector $\theta$ consists of all trainable weights and biases,
\[
\theta = \{W_1,b_1,\dots,W_L,b_L\}\,,
\]
which may be identified with a vector in $\mathbb{R}^{d_{\theta}}$, where $d_\theta = \sum_{k=1}^{L} (m_k m_{k-1} + m_k)$ denotes the total number of parameters in the network.
Then \eqref{JKO0} rewrites as:
\begin{equation} \label{JKO1}
    \begin{dcases}
    \min_{\theta}  
    \frac{1}{2\Delta t}\int_0^1 \int_{\RR^d}\!\int_{\Omega_{\bx}}
    |\bu_\theta(\tau, \bx(\tau; \bx^n),\bv(\tau; \bv^n))|^2 f^n(\bx^n, \bv^n) \rd \bx^n \rd \bv^n \rd \tau \\
    \hspace{1cm}   +\int_{\RR^d}\!\int_{\Omega_{\bx}} [\frac{|\bv|^2}{2} + \phi(\bx) + \log f](1,\bx(1;\bx^n,\bv^n), \bv(1;\bx^n,\bv^n))  f^n(\bx^n, \bv^n)  \rd \bx^n \rd \bv^n\,, 
    \\
    \text{s.t.} 
    ~~\frac{\rd}{\rd \tau} \bx(\tau) = \Delta t  \bv(\tau)\,, \qquad \bx(0) = \bx^n\,,
    \\ \qquad \frac{\rd}{\rd \tau} \bv(\tau) = - \Delta t \nabla_\bx \phi(\bx(\tau)) +  \bu_\theta(\tau, \bx(\tau),\bv(\tau))\,,  \qquad \bv(0) = \bv^n\,,\\
    \qquad \frac{\rd}{\rd \tau}\log |\text{det} \nabla_{\bx,\bv}\bT (\tau)|=  \nabla_\bv\cdot \bu_\theta( \tau, \bx(\tau),\bv(\tau))\,,\quad 
    \log |\text{det} \nabla_{\bx,\bv}\bT(0)| = 0\,,
    \\
    \qquad f(\tau, \bx(\tau), \bv(\tau)) = \frac{f^n(\bx^n, \bv^n)}{|\det \nabla_{\bx,\bv} \bT(\tau)|}\,.
    \end{dcases}
\end{equation}

\subsection{Particle method}
We now discretize \eqref{JKO1} using particles in space and symplectic integrator in time. Given i.i.d. $N$ samples $\{(\bx_p^n, \bv_p^n)\}_{p=1}^{N_p} \sim f^n(\bx,\bv)$, we solve the following optimization problem:
\begin{equation} \label{JKO2}
    \begin{dcases}
    \min_{\theta}  
    \frac{\Delta t }{2}\frac{1}{N_p}\sum_{p=1}^{N_p} |\bu_\theta(\bx_p^n, \bv_p^n)|^2 
    + \frac{\epsilon}{N_p}\sum_{p=1}^{N_p} \left[ \half |\bv_p^{n+1}|^2 + \phi(\bx_p^{n+1}) + \log f^{n+1}(\bx_p^{n+1}, \bv_p^{n+1}) \right]\,,
    \\
    \text{s.t.}~~~
    \bx_p^\np  = \bx_p^n + \bv_p^\np \Delta t\,,
    \\ \qquad \bv_p^\np = \bv_p^n - \nabla_\bx \phi(\bx_p^n)\Delta t + \bu_\theta(\bx_p^n, \bv_p^n) \Delta t \,,
    \\
    \qquad \log |\text{det} \nabla_{\bx,\bv}\bT(\bx_p^{n+1},\bv_p^{n+1}) |=\nabla_\bv\cdot \bu_\theta( \bx_p^n, \bv_p^n)  \Delta t\,,
    \\
    \qquad f^{n+1}(\bx_p^{n+1}, \bv_p^{n+1}) = \frac{f^n(\bx_p^n, \bv_p^n)}{|\det \nabla_{\bx,\bv} \bT(\bx_p^{n+1}, \bv_p^{n+1})|}\,.
    \end{dcases}
\end{equation}

It is worth noting that the update in the constraint can be replaced by a more general form:
\begin{align} \label{symplect00}
     \left(\begin{array}{c} \bx_p^{n+1}\\ \bv_p^{n+1} \end{array}\right) = \bS (\bx_p^n, \bv_p^n) + \left(\begin{array}{c} 0\\  \bu_\theta(\bx_p^n, \bv_p^n) \Delta t\end{array}\right) \,,
\end{align}
where
\begin{align} \label{SS}
   \bS: (\bx_p^n, \bv_p^n) \mapsto ( \bx_p^{*},  \bv_p^*)\,, 
\end{align}
is an {\it explicit} symplectic mapping. This generalization does not violate the update formula for the log determinant or the density that appeared in the third and fourth equations in the constraint above. 

Below we list two simple choices for $\bS$. For a more comprehensive discussion of symplectic integrators, we refer to \cite{hairer2006geometric}: 
\begin{itemize}
    \item[1)] Sympletic Euler method: 
\begin{equation} \label{symEuler2}
      \begin{dcases}
 \bx_p^*  = \bx_p^n + \bv_p^n \Delta t\,,
\\ 
 \bv_p^*  = \bv_p^n -\nabla_\bx \phi(\bx_p^*) \Delta t\,;
      \end{dcases}
\end{equation}
\item[2)] Stormer–Verlet method:
\begin{equation} \label{Verlet}
     \begin{dcases}
    \bv_p^{n+1/2} = \bv_p^n -\frac{\Delta t}{2} \nabla_\bx \phi(\bx_p^n) \,,
    \\  \bx_p^* = \bx_p^n + \Delta t \bv_p^{n+1/2}\,,
    \\  \bv_p^* = \bv_p^{n+1/2} - \frac{\Delta t}{2} \nabla_x \phi( \bx_p^*)\,.
     \end{dcases}
\end{equation}
    \end{itemize}
    
We verify the optimality condition for \eqref{JKO2} to ensure that the constrained optimization problem indeed yields the correct optimizer $\bu_\theta$. 
Denote 
\begin{align}  \label{0417}
J[\bu_\theta]&:= 
 \frac{1}{N_p}\sum_{p=1}^{N_p}  \left[\frac{ \Delta t }{2} |  \bu_\theta(\bx_p^{n}, \bv_p^{n})|^2 
    + \frac{\epsilon}{2} |\bv_p^{n+1}|^2 + \epsilon\phi(\bx_p^{n+1}) +\epsilon \log f^{n+1}(\bx_p^{n+1}, \bv_p^{n+1}) \right]\,.
\end{align}
By substituting the constraint into the objective function in \eqref{0417}, we have 
\begin{align*}
    \frac{\partial}{\partial \bu_\theta(\bx_p^n,\bv_p^n)}J &= \frac{1}{N_p} \left[\Delta t   \bu_\theta(\bx_p^{n}, \bv_p^{n}) + \epsilon \bv_p^{n+1} \Delta t + \epsilon \nabla_\bx \phi(\bx_p^{n+1}) \Delta t^2  \right.
    \\ & \left.\qquad +  \epsilon \nabla_\bx \log f^{n+1}(\bx_p^{n+1}, \bv_p^{n+1}) \Delta t^2  + 
    \epsilon\nabla_\bv \log f^{n+1}(\bx_p^{n+1}, \bv_p^{n+1}) \Delta t \right]\,.
\end{align*}
Setting $\frac{\partial }{\partial \bu_\theta}J = 0$, we have 
\begin{align}\label{eq:drift-error}
    \bu_\theta(\bx_p^{n}, \bv_p^{n}) = - \epsilon\bv_p^{n+1} - \epsilon \nabla_\bv \log f^{n+1}(\bx_p^{n+1}, \bv_p^{n+1}) + \mathcal {O} (\Delta t)\,, 
\end{align}
which is consistent with the expected drift form from the dissipative part of  \eqref{VFP0}.


We also note that Proposition~\ref{prop: energy dissipation} can be extended to the Lagrangian formulation. In particular, we have the following proposition. 
\begin{proposition}
 Let $\bx(\tau; \bx^n)$ and $\bv(\tau; \bv^n)$ be the solution to  \eqref{JKO1}, define
    \begin{align*}
    \mathcal{E}(\tau) & = \int_{\RR^d}\!\int_{\Omega_{\bx}} \big[\half |\bv(\tau; \bv^n)|^2   + \phi(\bx(\tau; \bx^n)) + \log f(\bx(\tau; \bx^n),\bv(\tau; \bv^n) ) \big] f^n(\bx^n, \bv^n)\rd \bx^n \rd \bv^n\,.
    \end{align*}
    Then we have $ \mathcal{E}(1) \leq \mathcal{E}(0)$.
\end{proposition}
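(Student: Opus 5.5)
The plan is to reduce the statement to Proposition~\ref{prop: energy dissipation} via the change of variables encoded in the last constraint of \eqref{JKO1}.

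First, I will identify $\mathcal{E}(\tau)$ with the Eulerian free energy of the pushed-forward density. Let $f(\tau,\cdot,\cdot)=\bT(\tau,\cdot,\cdot)_\# f^n$. The constraint $f(\tau,\bx(\tau),\bv(\tau)) = f^n(\bx^n,\bv^n)/|\det\nabla_{\bx,\bv}\bT(\tau)|$ is exactly the change-of-variables formula for this pushforward, and the log-determinant evolution \eqref{push2} guarantees that the $f$ so defined solves the constraint PDE in \eqref{JKO0}. Indeed, the vector field $\xi=(\Delta t\,\bv,\,-\Delta t\nabla_\bx\phi+\bu)$ has divergence $\nabla_\bv\cdot\bu$, and $\nabla_{\bx,\bv}\cdot(f\xi) = \Delta t\,\bv\cdot\nabla_\bx f - \Delta t\nabla_\bx\phi\cdot\nabla_\bv f + \nabla_\bv\cdot(\bu f)$. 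Then, for any test function $g$, $\int g(\bT(\tau,\bx^n,\bv^n)) f^n \,\rd\bx^n\rd\bv^n = \int g\, f(\tau,\cdot)\,\rd\bx\rd\bv$. Applying this with $g=\half|\bv|^2+\phi(\bx)+\log f(\tau,\bx,\bv)$ gives $\mathcal{E}(\tau)=\energy[f(\tau,\cdot,\cdot)]$ with $\energy$ as in \eqref{energy0}. In particular $\mathcal{E}(0)=\energy[f^n]$, since $\bT(0)$ is the identity, and $\mathcal{E}(1)=\energy[f(1)]$.

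Second, I will compare with the competitor $\bu\equiv0$. For this competitor the kinetic term vanishes and, by the computation in the proof of Proposition~\ref{prop: energy dissipation}, the pure transport flow $\partial_\tau\tilde f+\Delta t(\bv\cdot\nabla_\bx\tilde f-\nabla_\bx\phi\cdot\nabla_\bv\tilde f)=0$ conserves both $H$ and $H_0$. Hence $\energy[\tilde f(1)]=\energy[f^n]$. In Lagrangian terms, the flow with $\bu=0$ is Hamiltonian, so it is volume-preserving ($\log\det\equiv0$ by \eqref{push2}) and conserves $\half|\bv|^2+\phi(\bx)$ along trajectories. Optimality of the minimizer then gives
\[
\frac{1}{2\Delta t}\int_0^1\!\!\int |\bu|^2 f^n\,\rd\bx^n\rd\bv^n\rd\tau+\epsilon\,\mathcal{E}(1)\le \epsilon\,\mathcal{E}(0),
\]
and dropping the nonnegative kinetic term and dividing by $\epsilon>0$ yields $\mathcal{E}(1)\le\mathcal{E}(0)$.

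The main obstacle is the first step: justifying rigorously that the Lagrangian objective of \eqref{JKO1} equals the Eulerian one of \eqref{JKO0}, so that $\bu=0$ is an admissible competitor in the same problem. This requires $\bT(\tau)$ to be a diffeomorphism, which the paper already assumes in the kinetic neural ODE proposition, together with enough regularity and decay to perform the substitution. I would state these as standing assumptions. One subtlety remains. If the minimization is taken over the parametrized class $\{\bu_\theta\}$ rather than all fields, the argument needs $\bu\equiv0$ to lie in that class. This holds for the network architecture above by setting $W_L=0$ and $b_L=0$, so I would remark on it explicitly.
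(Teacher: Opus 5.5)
Your proposal is correct and follows essentially the same route as the paper: you compare the minimizer with the admissible competitor $\bu_\theta\equiv 0$, whose flow is Hamiltonian, hence volume-preserving and conserving $\half|\bv|^2+\phi(\bx)$ along trajectories, so its energy at $\tau=1$ equals $\mathcal{E}(0)$. You are more careful than the paper in explicitly dropping the nonnegative kinetic term and in checking that $\bu\equiv 0$ lies in the network class. Your Eulerian identification in the first step is not actually needed, since this comparison can be carried out entirely within the Lagrangian problem \eqref{JKO1}.
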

\begin{proof}
Starting from \eqref{JKO1}, we first observe that $\mathcal{E}(1) \leq  \mathcal{\tilde E}(1)$, where $\mathcal{\tilde E}$ is obtained by setting $\bu_\theta$ to zero. More precisely, define 
 \begin{align*}
    \mathcal{\tilde E}(\tau) & = \int_{\RR^d}\!\int_{\Omega_{\bx}} \big[\half | \tilde\bv(\tau; \bv^n)|^2   + \phi(\tilde \bx(\tau; \bx^n)) + \log f(\tilde\bx(\tau; \bx^n), \tilde\bv(\tau; \bv^n) ) \big] f^n(\bx^n, \bv^n)\rd \bx^n \rd \bv^n\,,
    \end{align*}
where 
\begin{equation} \label{0928}
   \begin{cases}{}
   & \frac{\rd}{\rd\tau}{\tilde\bx}(\tau) = \Delta t  \tilde \bv(\tau)\,, \qquad \tilde \bx(0) = \bx^n\,,
    \\& \frac{\rd}{\rd \tau}{\tilde \bv}(\tau)  = - \Delta t \nabla_\bx \phi( \tilde\bx(\tau))\,,   \qquad \tilde\bv(0) = \bv^n\,,
    \\ & \frac{\rd}{\rd \tau}\log |\text{det} \nabla_{\bx,\bv}\bT (\tau)|  =0\,,\quad 
    \log |\text{det} \nabla_{\bx,\bv}\bT(0)| = 0\,,
    \\ & f(\tau, \tilde\bx(\tau)\,, \tilde\bv(\tau))  = \frac{f^n(\bx^n, \bv^n)}{|\det \nabla_{\bx,\bv} \bT(\tau)|}\,.
    \end{cases}
\end{equation}
It is obvious that
\[
\int_{\RR^d}\!\int_{\Omega_{\bx}} [\half |\tilde\bv(\tau)|^2 + \phi(\tilde\bx(\tau)) ] f^n(\bx^n, \bv^n) \rd \bx^n \rd \bv^n\,,
\]
is preserved along the dynamics. We also claim that 
\[
\int \log f(\tilde \bx(\tau), \tilde \bv(\tau) ) f^n(\bx^n, \bv^n)\rd \bx^n \rd \bv^n\,,
\]
is not changing, and this is guaranteed by the fact that $|\text{det} \nabla_{\bx,\bv}T(\tau)| \equiv 1$ along the dynamics \eqref{0928}. Then the result concludes because $\mathcal{\tilde E}(1) = \mathcal{\tilde E}(0) = \mathcal{E}(0)$.
\end{proof}

\begin{remark}
We emphasize that free energy dissipation at the semi-discrete level does not automatically carry over to the fully discrete scheme, since the symplectic update \eqref{SS} is not exactly energy-conserving. Nevertheless, the symplectic discretization preserves energy up to a small error and exhibits stable long-time behavior. 
\end{remark}

\begin{algorithm}[ht!]
\caption{Deep Kinetic JKO scheme based on \eqref{JKO2}}
\label{alg:jko2_time_marching}
\begin{algorithmic}[1]

\Require Terminal time $T>0$, time step $\Delta t>0$, number of particles $N_p$, collision strength $\epsilon>0$,
initial particles $\{(\bx_p^0,\bv_p^0,f^0)\}_{p=1}^{N_p}$, number of inner optimization steps $K$, a parametrized function $\bu_\theta$ with parameters $\theta$.

\Ensure Approximate particle state at time $T$.

\State Set $M=\lceil T/\Delta t\rceil$.

\For{$n=0,1,\dots,M-1$}

    \State Compute the potential $\phi^n$ and the gradient vector field $-\nabla_\bx \phi^n$ from $\{\bx_p^n\}_{p=1}^{N_p}$.

    \State Initialize network parameters $\theta \leftarrow \theta^{(0)}$.

    \For{$k=1,\dots,K$}
        \State Perform one optimization step to approximately solve \eqref{JKO2} and update $\theta$.
    \EndFor

    \For{$p=1,\dots,N$}

        \State Update velocity:
        \[
        \bv_p^{n+1}
        =
        \bv_p^n
        -
        \nabla_\bx \phi^n(\bx_p^n)\Delta t
        +
        \bu_\theta(\bx_p^n,\bv_p^n)\Delta t.
        \]

        \State Update position:
        \[
        \bx_p^{n+1}
        =
        \bx_p^n
        +
        \bv_p^{n+1}\Delta t.
        \]

        \State Update density:
        \[
        f^{n+1}(\bx_p^{n+1},\bv_p^{n+1})
        =
        \frac{f^{n}(\bx_p^{n},\bv_p^{n})}
        {\left|\det \nabla_{\bx,\bv}\bT(\bx_p^{n+1},\bv_p^{n+1})\right|}.
        \]

    \EndFor

\EndFor

\State \Return $\{(\bx_p^M,\bv_p^M,f^M)\}_{p=1}^{N_p}$.

\end{algorithmic}
\end{algorithm}
\begin{remark}\label{rem:logdet_autodiff}
In practice, the Jacobian determinant appearing in \eqref{JKO2} is approximated through the divergence of the neural control field,
\[
\log\lvert\det\nabla_{\bx,\bv}\bT(\bx,\bv)\rvert
\;\approx\;
\Delta t\,\nabla_{\bv}\cdot \bu_\theta(\bx, \bv)\,,
\]
as specified by the constraint in \eqref{JKO2}. The divergence $\nabla_{\bv}\cdot \bu_\theta$ is evaluated by automatic differentiation within the neural network framework. More precisely, each partial derivative $\partial u_{\theta,i}/\partial v_i$ is computed by backpropagation, and the divergence is obtained by summation over the velocity components. This approach avoids the explicit construction of the full Jacobian matrix and enables an efficient and scalable evaluation of the entropy production term for large particle ensembles and high-dimensional velocity spaces.
\end{remark}

\subsection{Discussion on other formulations}
\subsubsection{Operator splitting}
It is worth noting that \eqref{JKO2} with the update \eqref{symEuler2} or \eqref{Verlet} is equivalent to an alternative operator-splitting discretization. In particular, the problem can be splitted into the following substeps:
\begin{itemize}
    \item[Step 1:]  Vlasov component:
    \begin{equation} \label{split-1}
     \begin{dcases}
    \left(\begin{array}{c} \bx_p^*\\ \bv_p^* \end{array}\right) = \bS (\bx_p^n, \bv_p^n) \,, 
    \\ f^*(\bx_p^*, \bv^*_p) = f^n(\bx_p^n, \bv_p^n)\,.
     \end{dcases}
    \end{equation}

    \item[Step 2:]  Fokker-Planck component:
    \begin{equation} \label{split-2}
    \begin{dcases}
    \min_{\theta}  
    \frac{\Delta t }{2}\frac{1}{N}\sum_{p=1}^N |\bu_\theta(\bx_p^*, \bv_p^n)|^2 
    + \frac{\epsilon}{N}\sum_{p=1}^N  \half |\bv_p^{n+1} |^2+  \log f^{n+1}(\bx_p^{*}, \bv_p^{n+1})\,,
    \\
    \text{s.t.} 
    ~~~\bx_p^{n+1} = \bx_p^*\,,
   \\ \qquad \bv_p^{n+1} = \bv_p^* + \bu_\theta (\bx_p^*, \bv_p^{n})  \Delta t\,,
    \\
    \qquad \log |\text{det} \nabla_{\bx,\bv}\bT(\bx_p^{*},\bv_p^{n+1}) |=\nabla_\bv\cdot \bu_\theta( \bx_p^*, \bv_p^n)  \Delta t\,,
    \\
    \qquad f^{n+1}(\bx_p^{n+1}, \bv_p^{n+1}) = \frac{f^*(\bx_p^*, \bv_p^*)}{|\det \nabla_{\bx,\bv} \bT(\bx_p^{*}, \bv_p^{n+1})|}\,.
    \end{dcases}
\end{equation}
\end{itemize}
While the split formulation may appear straightforward, we note that the non-split formulation has its own merits, as it provides a natural extension from gradient flows to non-gradient-flow systems and may offer new insights into the analytical structure and stability properties of the equation.

\subsubsection{Velocity matching} \label{sec:vm}
If one views \eqref{VFP0} as a generic transport equation in the combined phase space $(\bx,\bv)$, then the velocity matching approach of \cite{li2023self, shen2023entropy} applies naturally. More precisely, one seeks to match the velocity field $\bu$ with $\Amat \nabla_{\bx,\bv} \frac{\delta \energy}{\delta f}$ by solving the following variational problem:
\begin{equation} \label{VFP-var}
\begin{dcases}
& (f,\bu)=\arg\min_{ f,\bu}~ \int_0^T\int_{\Omega_{\bx,\bv}} f |\bu + \Amat \nabla_{\bx,\bv} \frac{\delta \energy}{\delta f}|^2 \rd \bx \rd  \bv \rd t \,, 
\\ & \textrm{s.t.} \quad  \partial_t f + \nabla_{\bx, \bv} \cdot (f \bu) = 0\,, \quad f(0,\bx, \bv ) = f_0 (\bx, \bv )\,,
\end{dcases}
\end{equation}
where $\Amat = \Dmat + \Jmat$. 
We parameterize the velocity field as $\bu(t,\bx,\bv)=\bu_\theta(t,\bx,\bv)$ and denote by $f_\theta$ the corresponding density induced by the continuity equation. 
Following the fixed-point iteration strategy in \cite{li2023self}, problem \eqref{VFP-var} can be solved iteratively via
\begin{equation} 
\begin{dcases}
& \theta^{k+1} \in \arg\min_{ \theta }~ \int_0^T\int_{\Omega_{\bx,\bv}} f_{\theta^k} |\bu_\theta + \Amat \nabla_{\bx,\bv} \frac{\delta \energy}{\delta f_{\theta^k}}|^2 \rd \bx \rd  \bv \rd t\,, \label{self0}
\\ & \textrm{s.t.} \quad  \partial_t f_{\theta^k} + \nabla_{\bx, \bv} \cdot (f_{\theta^k} \bu_{\theta^k}) = 0\,, \quad f(0,\bx, \bv ) = f_0 (\bx, \bv )\,.
\end{dcases}
\end{equation}
Substituting the explicit form of $\energy$ from \eqref{energy0} into \eqref{self0}, the objective function can be simplified as follows (for notational convenience, we omit the subscript $\theta^k$ in $f$):
\begin{align} \label{VFP-var2}
    F :=  & \int_0^T \int_{\Omega_{\bx,\bv}} [|\bu^x_\theta  - (\bv + \nabla_\bv \log f)|^2 + \nonumber
    \\ 
    & \hspace{2cm} |\bu^v_\theta + (\nabla_\bx \phi + \nabla_\bx \log f) - (\bv+ \nabla_\bv \log f)|^2] f \rd \bx \rd \bv \rd t \nonumber
    \\  = & \int_0^T \int_{\Omega_{\bx,\bv}} 
    [|\bu_\theta^x|^2 - 2 \bu_\theta^x \cdot \bv + |\bu_\theta^v|^2  + 2 \bu_\theta^v \cdot \nabla_\bx \phi - 2 \bu_\theta^v \cdot \bv \nonumber 
    \\ & \hspace{2cm} + 2 \nabla_\bv\cdot \bu^x_\theta - 2 \nabla_\bx \cdot \bu^v_\theta + 2 \nabla_\bv \cdot \bu_\theta^v 
    ) ] f \rd \bx \rd \bv \rd t  + \text{constant}\,,
\end{align}
where $\bu = (\bu^x, \bu^v)$. The integral \eqref{VFP-var2}
can be evaluated directly using samples from $f$, thereby completely avoiding explicit density estimation. This simplification is enabled by the score matching principle \cite{hyvarinen2005estimation}. 

Compared to our formulation \eqref{JKO0}, the formulation \eqref{VFP-var} differs in two main aspects.
(i) It adopts an end-to-end training strategy, rather than the sequential training used in \eqref{JKO0}. This approach has both advantages and drawbacks: on the one hand, it avoids repeated retraining over time by learning a time-dependent velocity field in a single stage; on the other hand, the resulting training problem can be significantly more challenging.
(ii) It does not explicitly preserve the conservative–dissipative structure of the equation; consequently, these structural properties are not inherently enforced in the formulation.

\subsubsection{Score based transport modeling}\label{ssec: score-matching}
Another approach that has been studied extensively is score-based transport modeling, which can be viewed as a simplified variant of \eqref{VFP-var}. Instead of learning the entire velocity field, this method focuses on learning only the component that cannot be represented directly by particles, namely the score function. This idea was first introduced in \cite{boffi2023probability} and subsequently extended to the mean-field Fokker–Planck equation in \cite{lu2024score}, as well as to the Landau equation in \cite{huang2025score, ilin2025transport}. In particular, when considering the end-to-end setting, one arrives at the following formulation, which parallels \eqref{VFP-var}:
\begin{equation} \label{score}
\begin{dcases}
& (f,\bu)=\arg\min_{ f,\bu}~ \int_0^T\int_{\Omega_{\bx,\bv}} f |\bu -\nabla_\bv \log f|^2 \rd \bx \rd  \bv \rd t \,,
\\ & \textrm{s.t.} \quad   \partial_t f + \bv \cdot \nabla_\bx f - \nabla_\bx \phi \cdot \nabla_\bv f -\nabla_\bv \cdot(\bv f)-  \nabla_\bv \cdot (\bu f) = 0\,,\quad f(0,\bx, \bv ) = f_0 (\bx, \bv )\,.
\end{dcases}
\end{equation}
Comparing \eqref{score} with \eqref{VFP-var}, the objective function in \eqref{score} involves fewer terms and is therefore easier to optimize. 
It is worth noting that score-based dynamics were introduced by \cite{shen2022selfconsistency}, from the self-consistency of the Fokker-Planck equation. 
The numerical approximation of score dynamics is an independent challenge in its own right. For example, \cite{li2023self} proposes an iterative method with a biased gradient estimator to avoid the estimation of the score function directly. 
In contrast, our method embeds both the conservative Hamiltonian and the dissipative gradient-flow mechanisms directly within a constrained variational framework. This formulation extends the structure-preserving properties of deep JKO schemes from Wasserstein gradient flows to kinetic equations. 
A brief numerical comparison will be presented in Section~\ref{numerical:linear}.

\subsection{Extension to the nonlinear system}\label{sec:pic_jko}
We extend the generalized dynamical JKO scheme \eqref{JKO2} to the nonlinear setting. To handle the Poisson equation, we employ a particle-in-cell method \cite{sonnendrucker2013numerical, chen2011energy}. Here, the subscript $p$ denotes particles, while the subscript $h$ denotes grid points. In particular, $(\bx_p^n, \bv_p^n)$ represent the position and velocity of particle $p$ at time $t^n$, and $\bx_h$ denotes the $h$-th grid point, which remains fixed in time. The scheme takes the following form: 
\begin{equation} \label{JKO2-VPFP}
    \begin{dcases}
    \min_{\theta}  
    \frac{\Delta t }{2}\frac{1}{N}\sum_{p=1}^N |u_\theta(\bx_p^n, \bv_p^n)|^2 
    + \frac{\epsilon}{N}\sum_{p=1}^N \left[ \half |\bv_p^{n+1}|^2 + T_0\log f^{n+1}(\bx_p^{n+1}   , \bv_p^{n+1})  \right] + \sum_{h} |\bE_h^\np|^2 (\Delta x)^d\,,
    \\
    \text{s.t.} 
    ~~\bx_p^{n+1} = \bx_p^n + \bv_p^{n}\Delta t\,,
    \\ \qquad \bv_p^{n+1} = \bv_p^n + \bE_p^{n+1}\Delta t + \bu_\theta (\bx_p^n, \bv_p^{n})  \Delta t\,,
    \\ 
    \qquad \log |\text{det} \nabla_{\bx,\bv}\bT(\bx_p^{n+1},\bv_p^{n+1}) |=\nabla_\bv\cdot \bu_\theta( \bx_p^n, \bv_p^n) \Delta t\,,
    \\
    \qquad f^{n+1}(\bx_p^{n+1}, \bv_p^{n+1}) = \frac{f^n(\bx^n, \bv^n)}{|\det \nabla_{\bx,\bv} \bT(\bx_p^{n+1}, \bv_p^{n+1})|}\,.
    \end{dcases}
\end{equation}
where the field term $\bE$ is computed as follows: 
\begin{align*}
    &\bE_p^{n+1} = \sum_h S(\bx_p^{n+1 }- \bx_h) \bE_h^{n+1}\,,
    \\
    &\bE_h^{n+1} = -\nabla_\bx \phi_h^{n+1}\,, ~~ -\Delta_\bx \phi_h^{n+1} = \rho_h^{n+1} -1\,,  ~~
    \rho_h^{ n+1} = \frac{1}{\Delta x} \sum_p \omega_p S(\bx_h- \bx_p^{n+1}),\, ~~ \omega_p = \frac{1}{N}.
\end{align*}
Here, the index $h$ represents the index of a point $x_h$ which is a center point of each grid cell, $S(\bx)$ is a local basis function, such that $\frac{1}{|\text{cell}_h|}\int_{\text{cell}_h} S(\bx) \rd \bx = 1$. A common choice is the tent function
\[
S(z) = \max\{0, 1- \frac{|z|}{\Delta x}\}\,.
\]

\begin{remark}
We note that here we employ an explicit symplectic integrator for the Hamiltonian part, which is not exactly energy-conserving. Nevertheless, one can follow the nice approach proposed in \cite{ricketson2025explicit} to enforce exact energy conservation. This adaptation is straightforward in our setting, and we therefore do not pursue it further here.
\end{remark}

\begin{algorithm}[ht!]
\caption{Kinetic JKO scheme based on \eqref{JKO2-VPFP}}
\label{alg:vafp_jko2_time_marching}
\begin{algorithmic}[1]

\Require Terminal time $T>0$, time step $\Delta t>0$, number of particles $N$, collision strength $\epsilon>0$,
initial particles $\{(\bx_p^0,\bv_p^0,f^0)\}_{p=1}^N$, number of inner optimization steps $K$, a parametrized function $\bu_\theta$ with parameters $\theta$.

\Ensure Approximate particle state at time $T$.

\State Set $M=\lceil T/\Delta t\rceil$.

\For{$n=0,1,\dots,M-1$}

    \State \textbf{Position update:}
    \For{$p=1,\dots,N$}
        \State $\bx_p^{n+1} = \bx_p^n + \bv_p^{n}\Delta t$\,;
    \EndFor

    \State \textbf{Field update:}
    \State Compute grid density $\rho_h^{n+1} = \frac{1}{\Delta x} \sum_p \frac{1}{N} S(\bx_h- \bx_p^{n+1})$ from new positions.   \State Solve the discrete Poisson equation $-\Delta_\bx \phi_h^{n+1} = \rho_h^{n+1} - 1$.
    \State Compute $\bE_h^{n+1} = -\nabla_\bx \phi_h^{n+1}$ and interpolate to particles: $\bE_p^{n+1} = \sum_h S(\bx_p^{n+1}- \bx_h) \bE_h^{n+1}$.

    \State \textbf{Neural network optimization:}
    \State Initialize network parameters $\theta \leftarrow \theta^{(0)}$.
    \For{$k=1,\dots,K$}
        \State Perform one optimization step to minimize the loss in \eqref{JKO2-VPFP} and update $\theta$.
    \EndFor

    \State \textbf{Velocity and density update:}
    \For{$p=1,\dots,N$}
        \State $\bx_p^{n+1} = \bx_p^n + \bv_p^{n}\Delta t$\,;
        \State $\bv_p^{n+1} = \bv_p^n + \bE_p^{n+1}\Delta t + \bu_\theta(\bx_p^n,\bv_p^n)\Delta t$\,;
        \State $f^{n+1}(\bx_p^{n+1},\bv_p^{n+1}) = f^{n}(\bx_p^{n},\bv_p^{n}) \exp\bigl(-\nabla_\bv\cdot \bu_\theta(\bx_p^n,\bv_p^n) \Delta t\bigr)$\,;
    \EndFor

\EndFor

\State \Return $\{(\bx_p^M,\bv_p^M,f^M)\}_{p=1}^N$\,.

\end{algorithmic}
\end{algorithm}

\section{Numerical examples}\label{sec4}

In this section, we present numerical results to demonstrate the effectiveness of our proposed method. We first consider several linear kinetic Fokker-Planck equations with known analytical solutions to validate the accuracy of our approach. We then apply our method to nonlinear kinetic Fokker-Planck equations, particularly the Vlasov-Poisson-Fokker-Planck system, demonstrating its capability to handle complex interactions and long-time behaviors.

\subsection{Linear kinetic Fokker-Planck equations} \label{numerical:linear}
Here we present two examples of linear kinetic Fokker-Planck equations: one with an explicit solution over time, and another with an explicit solution only at equilibrium. Both serve as initial testbeds for our proposed method. We present results in both 1D position–1D velocity and 3D position–3D velocity settings.

\bigskip

\paragraph{Example 1: Gaussian cases with explicit solutions}

We begin with a simple test case where the solution to the kinetic Fokker-Planck equation remains Gaussian over time. Specifically, we consider the state variable $\bx = (x, v)$, where $x, v \in \mathbb{R}^d$. Assume there exists a positive definite matrix $\mathsf{K} \in \mathbb{R}^{2d \times 2d}$ and a vector $\tilde{\mu} \in \mathbb{R}^{2d}$ such that the potential function $\phi(x)$ satisfies
\[
\frac{v^2}{2} + \phi(x) = \frac{1}{2}(\bx - \tilde{\mu})^\top \mathsf{K}^{-1} (\bx - \tilde{\mu})\,.
\]
Suppose the initial distribution is $f(0, \bx) \sim \mathcal{N}(\mu_0, \mathsf{C}_0)$. Then the solution to the kinetic equation \eqref{VFP0} remains Gaussian:
    \[
    f(t, \bx) \sim \mathcal{N}(\mu(t), \mathsf{C}(t))\,,
    \]
    where $\mu(t)$ and $\mathsf{C}(t)$ evolve according to
    \begin{subequations} \label{mu_C}
    \begin{align}
        \dot \mu(t) &= -(\mathsf{D} + \mathsf{J})\mathsf{K}^{-1}(\mu(t) - \tilde{\mu})\,, \\
        \dot{\mathsf{C}}(t) &= 2\mathsf{D} - 2\,\mathrm{Sym}\big((\mathsf{D} + \mathsf{J})\mathsf{K}^{-1} \mathsf{C}(t)\big)\,,
    \end{align}
    \end{subequations}
    with initial conditions $\mu(0) = \mu_0$, $\mathsf{C}(0) = \mathsf{C}_0$, and
    \[
        \mathrm{Sym}(A) = \frac{1}{2}(A + A^\top)\,.
    \]

In the 1D case ($d=1$), we choose
\[
\mathsf{K} = \begin{pmatrix} \frac{1}{2} & 0 \\ 0 & 1 \end{pmatrix}\,, \quad \tilde{\mu} = \begin{pmatrix} 0 \\ 0 \end{pmatrix}\,,
\]
which corresponds to the potential $\phi(x) = x^2$. The initial condition is set as $\mu_0 = (0, 1)^\top$ and
\[
\mathsf{C}_0 = \begin{pmatrix} 2 & 1 \\ 1 & 3 \end{pmatrix}\,.
\]
We then simulate the dynamics of \eqref{VFP0}.

\bigskip

Since the exact solution remains Gaussian, the optimal control field can be well approximated by an affine function. To exploit this structure, we parameterize the neural network model for $u_\theta(x, v)$ as a single-layer affine map:
\[
\bu_\theta(x, v) = W_\theta 
\begin{pmatrix}
x \\ v
\end{pmatrix} + b_\theta\,,
\]
where $W_\theta \in \mathbb{R}^{d \times 2d}$ and $b_\theta \in \mathbb{R}^d$.
This parameterization is sufficient to capture the true control dynamics in this setting while keeping the model simple and efficient to train.

The control field $\bu_\theta$ is trained using the proposed variational formulation given in \eqref{JKO2} and \Cref{alg:jko2_time_marching}. To assess the accuracy of this approach, we compare it against a score-matching method, which directly estimates the score function $\nabla_{\bv} \log f$ from samples. To establish a ground truth for this comparison, we compute the exact solution by solving the ODE system \eqref{mu_C} for the mean and covariance of the Gaussian distribution, yielding $f_{\mathrm{exact}}$. We then evaluate the drift error at each time step $n$, defined as the squared difference between the learned field $\bu_\theta$ and the exact analytical field:
\begin{equation}\label{eq:drift_error}
\mathrm{Error}^2(n)
=
\frac{1}{N_p}
\sum_{p=1}^{N_p}
\left|
\bu_\theta^n(\bx_p^{n}, \bv_p^{n})
+
\bv_p^{n+1}
+
\nabla_\bv \log f_{\mathrm{exact}}^{n+1}(\bx_p^{n+1}, \bv_p^{n+1})
\right|^2\,.
\end{equation}

Figure~\ref{fig:exp1-over-time} demonstrates the temporal convergence of the proposed method detailed in \Cref{alg:jko2_time_marching}. It plots the time-averaged drift error, computed from $t=0$ to the terminal time $T=8$, as a function of the time step size $\Delta t$. For both the 1D and 3D simulations, we use $N_p = 10{,}000$ particles, where $(\bx_p^n, \bv_p^n)$ denotes the state of the $p$-th particle at time step $n$. The reference exact density, $f_{\mathrm{exact}}$, is generated using a highly refined time step of $\Delta t_{\mathrm{ref}} = 10^{-4}$ to ensure high accuracy. As indicated by the dashed reference lines, the proposed method exhibits a linear, first-order ($\mathcal{O}(\Delta t)$) convergence rate.

Figure~\ref{fig:exp1-error-comparison} compares the performance of our proposed method against the score-matching approach over time. The left panel tracks the drift error (computed via \eqref{eq:drift_error}) at each individual time step to highlight the relative accuracy of the two methods. The right panel illustrates the energy decay exclusively for our proposed algorithm. The discrete energy at time step $n$ is calculated as:
\begin{equation}\label{eq:energy}
    \mathrm{Energy}(n) = \frac{1}{N_p} \sum_{p=1}^{N_p} \left( \frac{1}{2} |\bv_p^{n+1}|^2 + \phi(\bx_p^{n+1}) + \log f^{n+1}(\bx_p^{n+1}, \bv_p^{n+1}) \right).
\end{equation}

These experiments were conducted over the time interval $[0, 10]$ for the 1D case and $[0, 20]$ for the 3D case, both utilizing a discrete time step of $\Delta t = 0.1$. For the neural network architectures, both configurations used fully connected networks with \texttt{Tanh} activations between layers. Specifically, the 1D model consisted of 2 hidden layers with 16 units each, while the 3D model used 2 hidden layers with 32 units each.

In the 1D case, our algorithm exhibits stable and consistently low error throughout the time interval, whereas the score-matching method shows noticeable oscillations, suggesting some instability in the drift approximation. In the 3D case, the score-based model performs well during the early phase, yielding smaller errors up to $t \approx 6$, but its accuracy appears to plateau thereafter. In contrast, our algorithm continues to improve over time and eventually attains lower errors for $t > 6$.




\begin{figure}
    \centering
    \begin{subfigure}[t]{0.48\linewidth}
        \centering
        \includegraphics[width=0.99\textwidth]{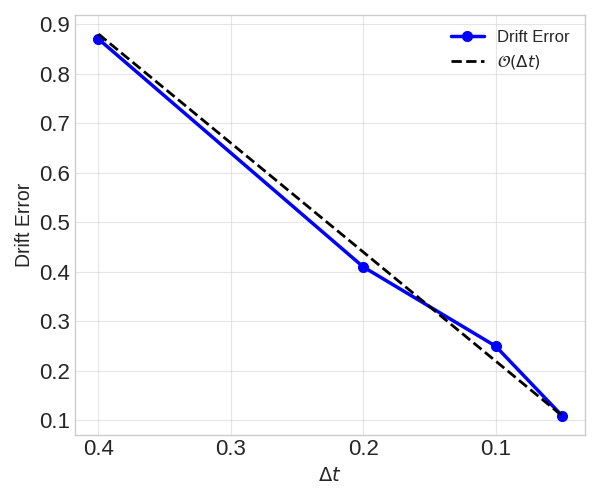}
        \caption{1D}
    \end{subfigure}
    \begin{subfigure}[t]{0.48\linewidth}
        \centering
        \includegraphics[width=0.99\textwidth]{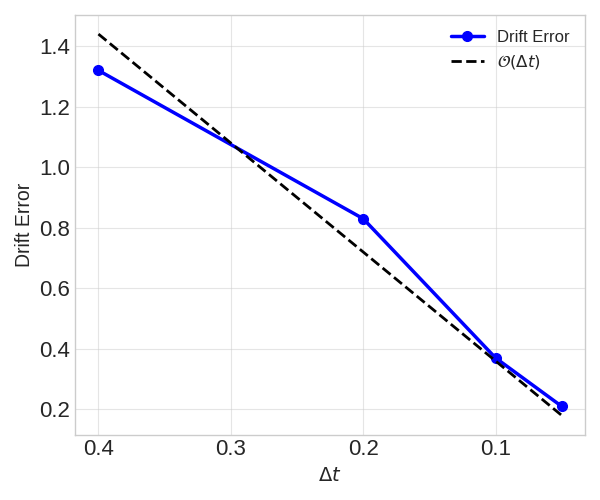}
        \caption{3D}
    \end{subfigure}
    \caption{
        Drift error vs. time step size $\Delta t$ for Example~1 in the 1D (left) and 3D (right) cases.
        The plots demonstrate linear convergence behavior as the time step decreases.
        The dashed lines indicate a reference $\mathcal{O}(\Delta t)$ convergence rate.
    }
    \label{fig:exp1-over-time}
\end{figure}

\begin{figure}[htbp]
    \centering
    \begin{subfigure}[t]{0.48\linewidth}
        \centering
        \includegraphics[width=\linewidth]{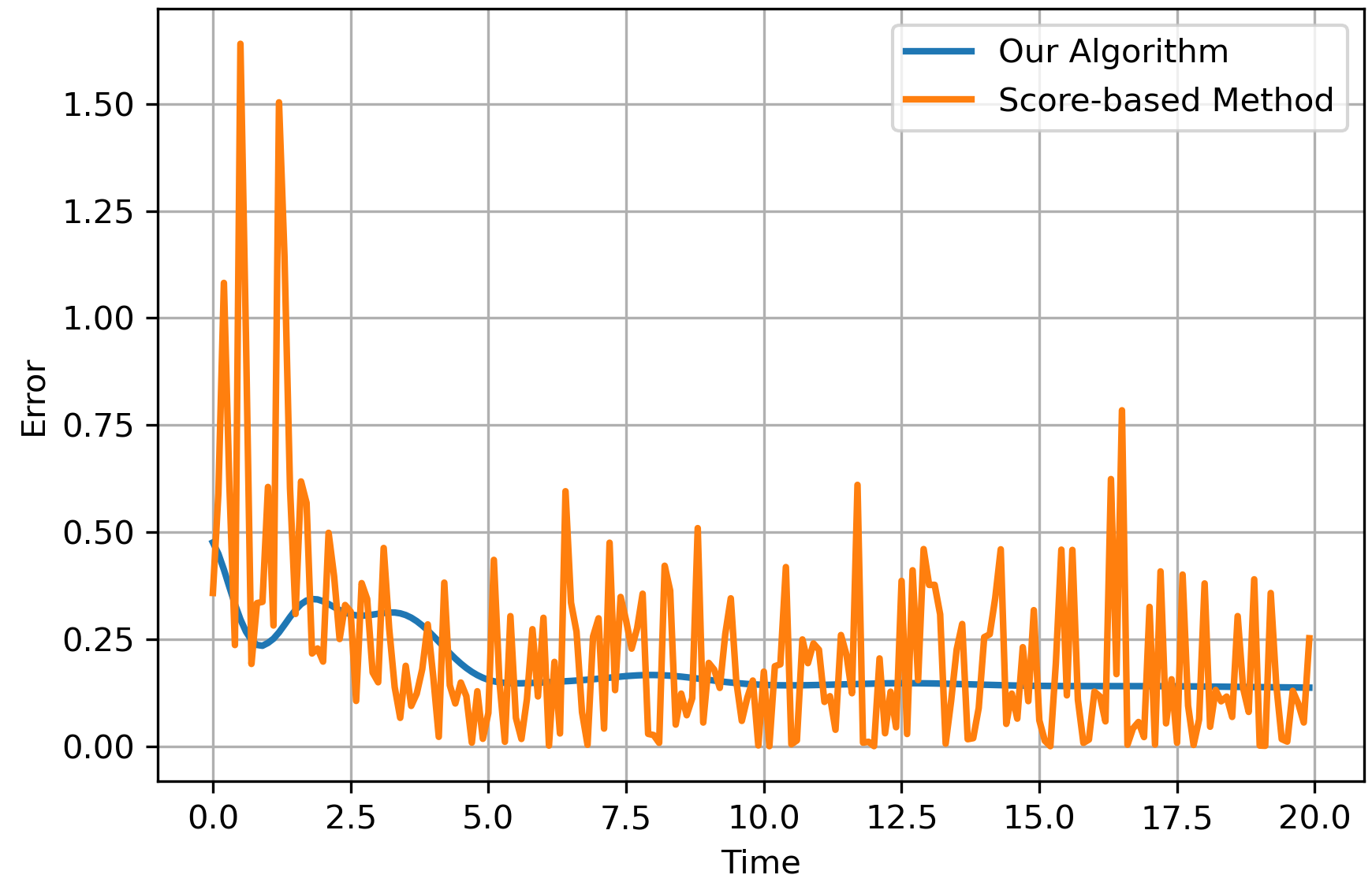}
        \caption{1D: error plot}
        \label{fig:exp1-error}
    \end{subfigure}
    \hfill
    \begin{subfigure}[t]{0.48\linewidth}
        \centering
        \includegraphics[width=\linewidth]{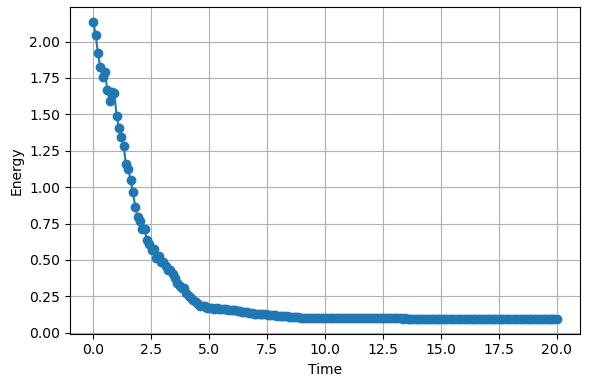}
        \caption{1D: energy plot}
        \label{fig:exp1-3d-error}
    \end{subfigure}

    \begin{subfigure}[t]{0.48\linewidth}
        \centering
        \includegraphics[width=\linewidth]{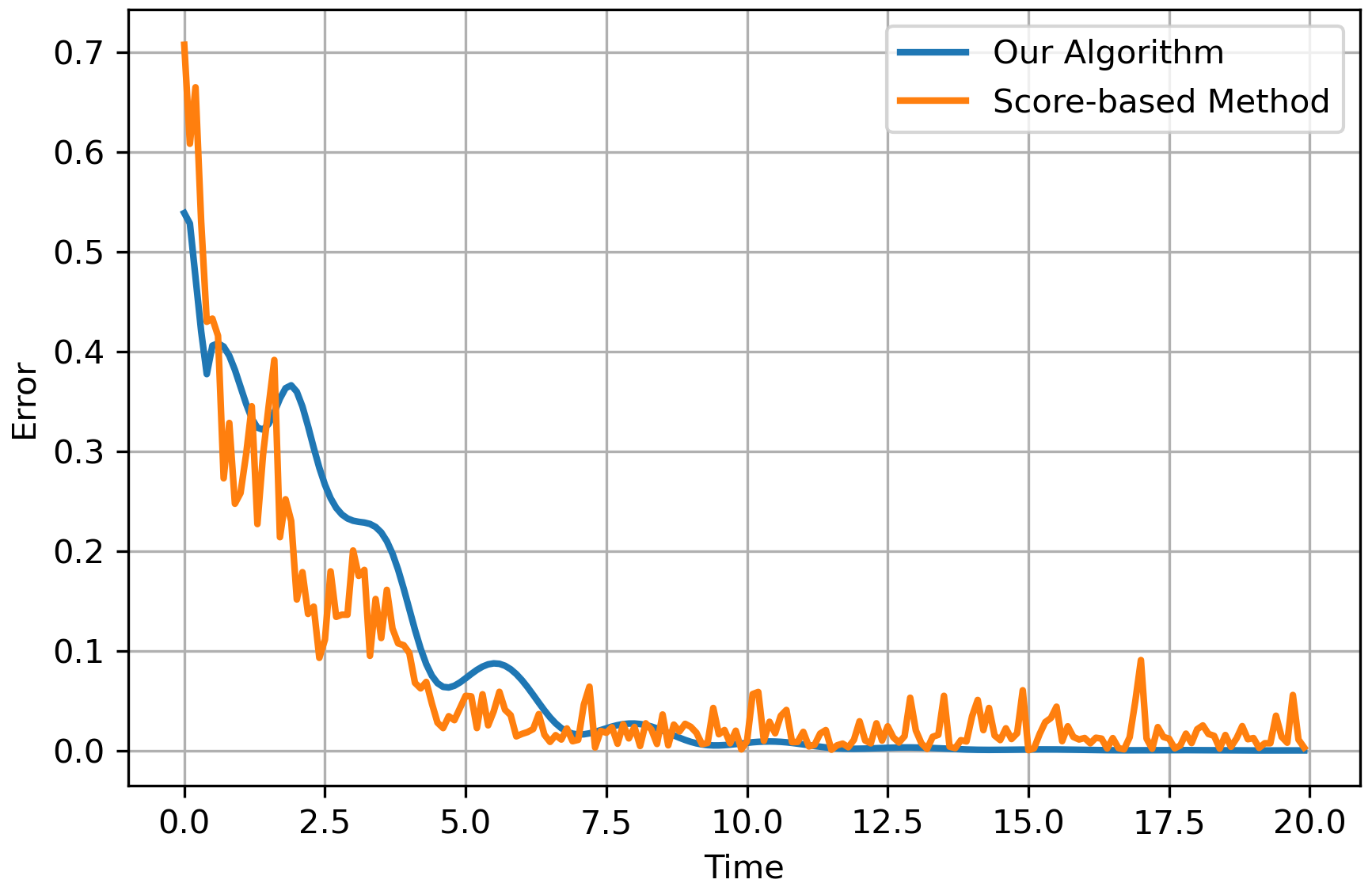}
        \caption{3D: error plot}
        \label{fig:exp1-error}
    \end{subfigure}
    \hfill
    \begin{subfigure}[t]{0.48\linewidth}
        \centering
        \includegraphics[width=\linewidth]{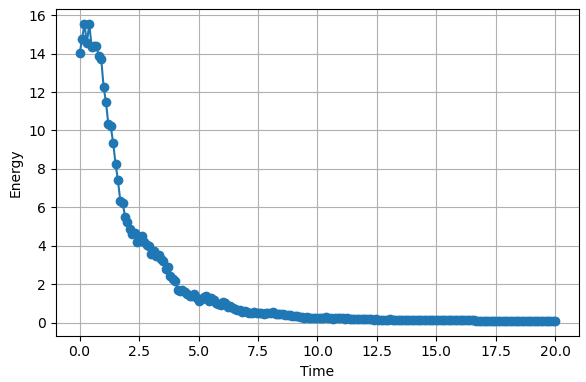}
        \caption{3D: energy plot}
        \label{fig:exp1-3d-error}
    \end{subfigure}
    \caption{Drift error plots using \eqref{eq:drift-error} compare the errors between our algorithm and the score-based model for Example 1 in the 1D case (left) and the 3D case (right). The 1D experiment was conducted over the time interval from 0 to 10 with a time step of 0.1, while the 3D experiment was run from 0 to 20 with the same time step. The same parameters, including the neural network architecture, were used for both experiments and both methods.
}
    \label{fig:exp1-error-comparison}
\end{figure}


\bigskip

\paragraph{Example 2. Convergence to the stationary distribution} 
In the second experiment, we investigate the convergence of the learned solution toward the stationary equilibrium. We first consider $1D$ in $x$ and $1D$ in $v$ case 
\begin{equation} \label{02191}
\partial_t f + v \,\partial_x f - \partial_x \phi \,\partial_v f = \partial_v \cdot \big(v f + \partial_v f\big)\,,
\end{equation}
with initial condition 
\begin{align*}
\rho^{0}(x) &= \mathcal{N}(0,1)\,, \\
f^{0}(x,v) &= \frac{\rho^{0}(x)}{2\sqrt{2\pi}}\left(\exp\!\left(-\tfrac{|v+1.5|^2}{2}\right) + \exp\!\left(-\tfrac{|v-1.5|^2}{2}\right)\right)\,,
\end{align*}
and the  potential function
\[
\phi(x) = \tfrac{1}{2} x^2 + \cos(2 \pi x)\,.
\]
The stationary distribution corresponding to \eqref{02191} is known explicitly as  
\begin{align} \label{equ11}
f_{\infty}(x,v) = C \exp\!\left(-\tfrac{v^2}{2} - \phi(x)\right)\,,
\end{align}
where $C$ is the normalization constant ensuring $\int f_\infty(x,v)\rd\bx\rd\bv = 1$. As shown in \cite{desvillettes2001trend}, solutions to \eqref{02191} converge exponentially fast to \eqref{equ11} in relative entropy.  

To approximate this behavior numerically, we represent the velocity field $\bu_\theta$ with a neural network consisting of two hidden layers with 16 neurons and \texttt{Tanh} activation. The time step is chosen to be $\Delta t = 0.1$, advanced up to final time $T=10$, yielding 100 outer iterations. 

Figure~\ref{fig:example2-quadratic-no-bdry} visualizes the evolution of the density produced by the algorithm from $t=0$ to $t=2$. 
The plots show convergence toward the expected stationary solution: the stationary distribution is drawn as contours, while the computed solution appears as a density map in which lighter regions indicate higher particle density. The close overlap between high-density regions and the contours demonstrates accurate convergence.

Figure~\ref{fig:example2-quadratic-marginals-KL-no-bdry} presents two complementary diagnostics. 
Panel (A) displays the $x$- and $v$-marginals of the computed solution at $t=10$ together with the stationary marginals. 
The close overlap indicates that the numerical method attains high accuracy at late times. 
Panel (B) reports the Kullback--Leibler divergence between the computed distribution and the stationary distribution for $t \in [0,10]$ under the particle JKO dynamics, where an exponential decay is observed. 

\begin{figure}[ht]
    \centering
    \begin{subfigure}{0.32\linewidth}
        \centering
        \includegraphics[width=\linewidth]{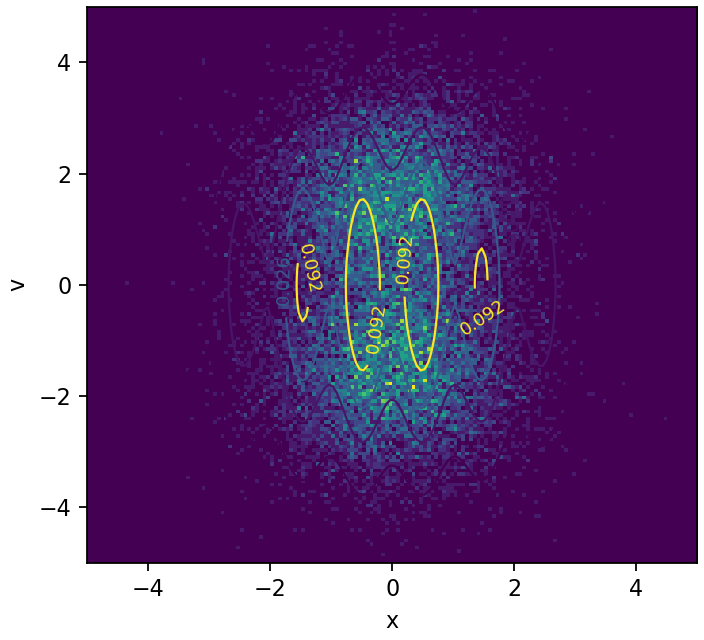}
        \caption{$t=0.00$}
    \end{subfigure}
    \begin{subfigure}{0.32\linewidth}
        \centering
        \includegraphics[width=\linewidth]{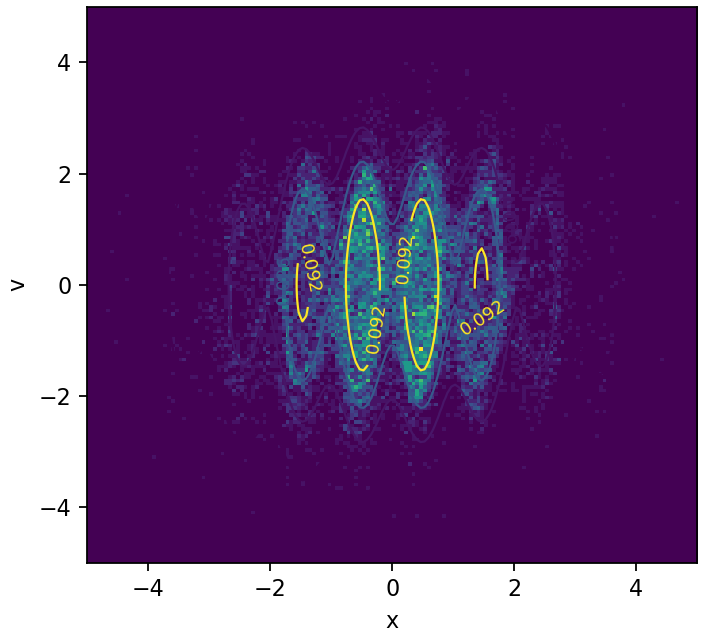}
        \caption{$t=1.00$}
    \end{subfigure}
    \begin{subfigure}{0.32\linewidth}
        \centering
        \includegraphics[width=\linewidth]{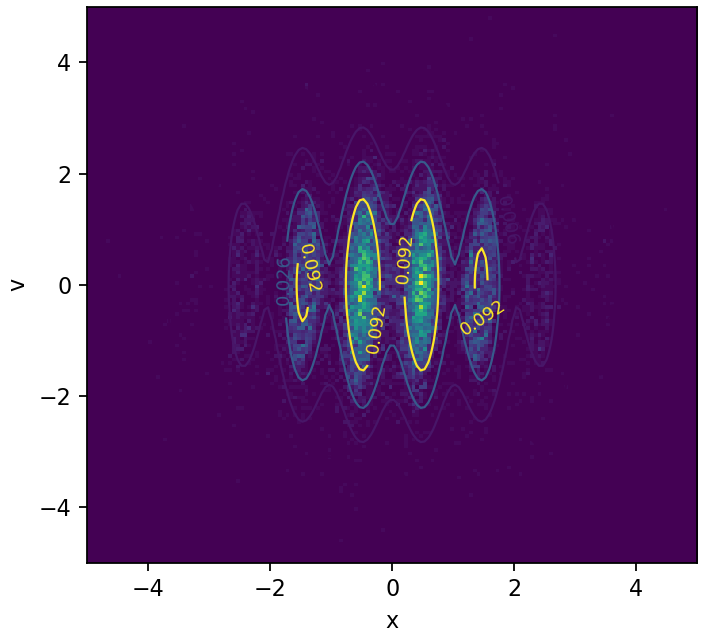}
        \caption{$t=10.00$}
    \end{subfigure}
    \caption{Evolution of the distribution from $t=0$ to $t=2$ from Experiment 2 with step size $0.1$. The results show convergence toward the stationary distribution at $t=2$. Bright regions indicate areas of high particle density, darker regions indicate low density, and the contours 
represent the stationary solution. 
}
    \label{fig:example2-quadratic-no-bdry}
\end{figure}

\begin{figure}[ht]
    \centering
    \begin{subfigure}{0.62\linewidth}
        \centering
        \includegraphics[width=\linewidth]{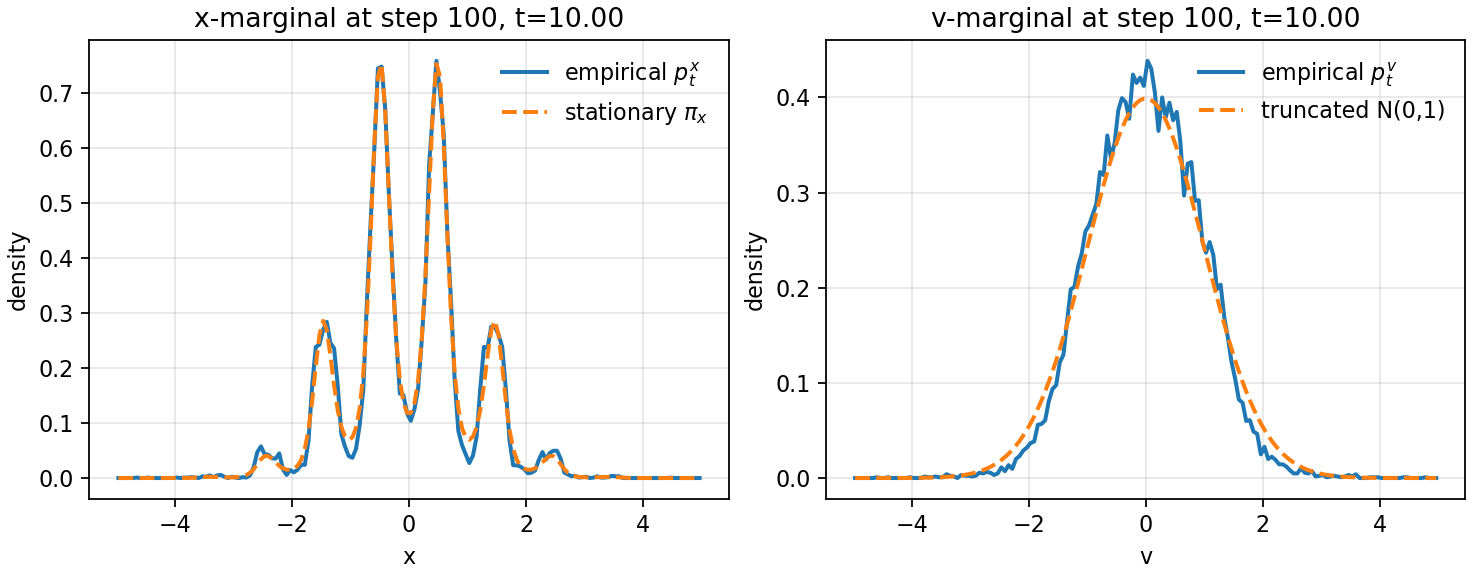}
        \caption{Sine potential}
    \end{subfigure}
    \begin{subfigure}{0.37\linewidth}
        \centering
        \includegraphics[width=\linewidth]{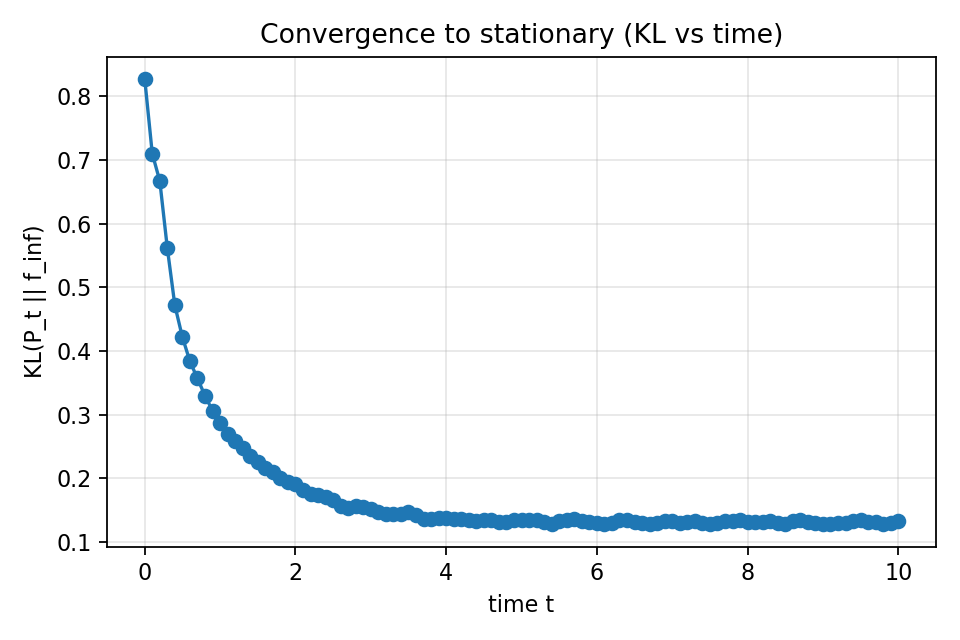}
        \caption{KL divergence over time}
    \end{subfigure}
    \caption{(A) The $x$-marginal and $v$-marginal at time $t=10$. 
    (B) Convergence of the Kullback--Leibler divergence to the stationary distribution for $t \in [0,10]$ under the particle JKO dynamics from Experiment~2.}
    \label{fig:example2-quadratic-marginals-KL-no-bdry}
\end{figure}

\bigskip

\paragraph{Example 3. periodic domain} 
In the third experiment, similar to the second experiment, we investigate the convergence of the learned solution toward the stationary equilibrium of a kinetic Fokker--Planck--type equation but the only change is the boundary condition. Here, we consider the periodic boundary condition on $x$ space,  following the setup in \cite{carrillo2021variational}. We consider the same one-dimensional-in-space and one-dimensional-in-velocity kinetic equation in \eqref{02191} posed on $x \in [0,1]$ with periodic boundary conditions and $v \in \mathbb{R}$. The initial condition is chosen as the product of a periodic function in $x$ and a symmetric mixture of Gaussians in $v$:  
\begin{align*}
\rho^{0}(x) &= 1+ \tfrac{1}{2} \cos(2\pi x)\,,\\ 
f^{0}(x,v) &= \frac{\rho^{0}(x)}{2\sqrt{2\pi}}\left(\exp\!\left(-\tfrac{|v+1.5|^2}{2}\right) + \exp\!\left(-\tfrac{|v-1.5|^2}{2}\right)\right)\,.
\end{align*}
We test the following external potential:  
\begin{equation}\label{eq:potentials-linear}
    \phi_s(x) = \tfrac{1}{5}\sin(2\pi x)\,.
\end{equation}

We approximate the velocity field $\bu_\theta$ using a neural network with two hidden layers of 16 neurons each. To accommodate periodic boundary conditions in this experiment, we enforce periodicity directly in the neural network inputs. Specifically, for a given state $(x,v)$, the network takes $(\sin(2 \pi x), \cos(2 \pi x), v)$ as its input. The evolution of the particle system is advanced iteratively using the proposed algorithm in \Cref{alg:jko2_time_marching}. For each discrete time step $n$, the algorithm computes the state transition to the subsequent step $n+1$ using a fixed step size of $\Delta t = 0.1$. This process is repeated up to a final time of $T = 20$, yielding 200 outer iterations. The density evolution for both choices of the potential function is reported in Figure~\ref{fig:example2-quadratic}. In both cases, the learned solution converges to the stationary state, confirming the expected long-time behavior.

Figure~\ref{fig:example2-quadratic-marginals} summarizes the behavior of the numerical solution through three plots. The first two panels compare the empirical marginals with their stationary targets, namely $f_t^x$ versus $\pi_x(x)\propto e^{-\phi(x)}$ and $f_t^v$ versus $\mathcal N(0,1)$. The overlays show strong agreement in both $x$-marginal and $v$-marginal. The third panel shows the convergence of the Kullback--Leibler divergence between the computed solution and the exact stationary distribution over the time interval $t \in [0,20]$. At each time step, the numerical solution is represented by an ensemble of particles $\{(\bx_p^n,\bv_p^n)\}_{p=1}^{N_p}$. The stationary distribution $f_\infty$ is known in closed form  from \eqref{equ11}.  The Kullback--Leibler divergence
\[
\mathrm{D}_{\mathrm{KL}}\bigl(f^n \,\|\, f_\infty\bigr)
=
\int f^n(\bx,\bv)
\log\!\left(
\frac{f^n(\bx,\bv)}{f_\infty(\bx,\bv)}
\right)
\, \rd\bx \, \rd\bv\,,
\]
is therefore approximated using a Monte Carlo estimator based on the particle ensemble. Specifically, we evaluate the Kullback-Leibler (KL) divergence using a Monte Carlo approximation over the particle ensemble:
\[
\mathrm{D}_{\mathrm{KL}}\bigl(f^n \,\|\, f_\infty\bigr) \;\approx\; \frac{1}{N_p} \sum_{p=1}^{N_p} \left( \log f^n(\bx_p^n,\bv_p^n) - \log f_\infty(\bx_p^n,\bv_p^n) \right)\,.
\]
In the implementation, the exact log-density $\log f^n(\bx_p^n,\bv_p^n)$ is evaluated by dynamically tracking it for each particle along its trajectory. Starting from the analytically known initial distribution, the log-density is updated at each discrete time step using the linearized log-determinant of the pushforward mapping (i.e., $\log f^{n+1} = \log f^n - \Delta t \, \partial_{\bv} \bu_\theta$). For the stationary distribution, the analytical formula from \eqref{equ11} is used. The resulting KL divergence curves decrease substantially over time, demonstrating rapid convergence of the numerical solution to the stationary distribution.

\begin{figure}[ht]
    \centering
    \begin{subfigure}{0.19\linewidth}
        \centering
        \includegraphics[width=\linewidth]{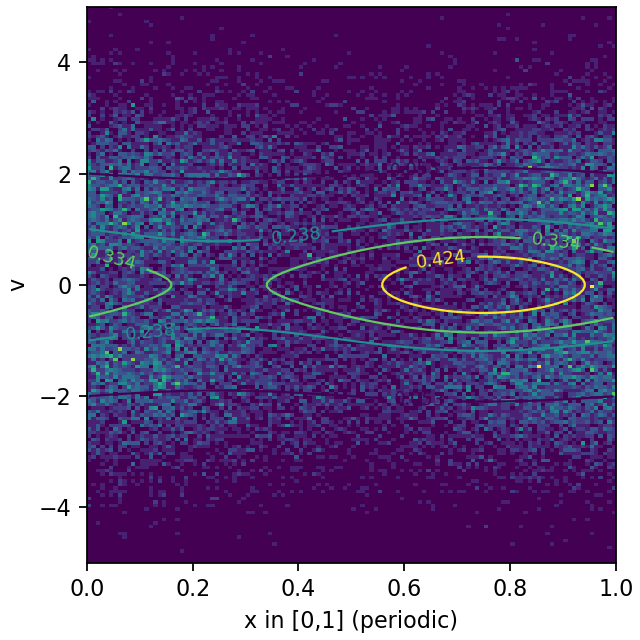}
        \caption{$t=0.00$}
    \end{subfigure}
    \begin{subfigure}{0.19\linewidth}
        \centering
        \includegraphics[width=\linewidth]{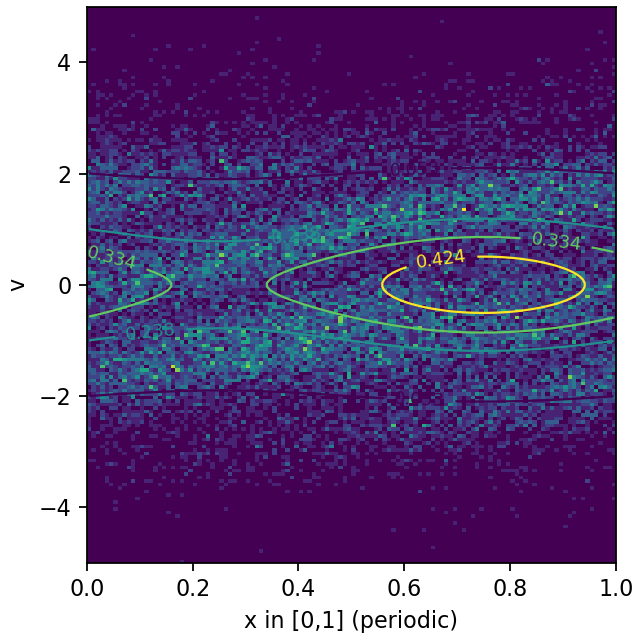}
        \caption{$t=0.50$}
    \end{subfigure}
    \begin{subfigure}{0.19\linewidth}
        \centering
        \includegraphics[width=\linewidth]{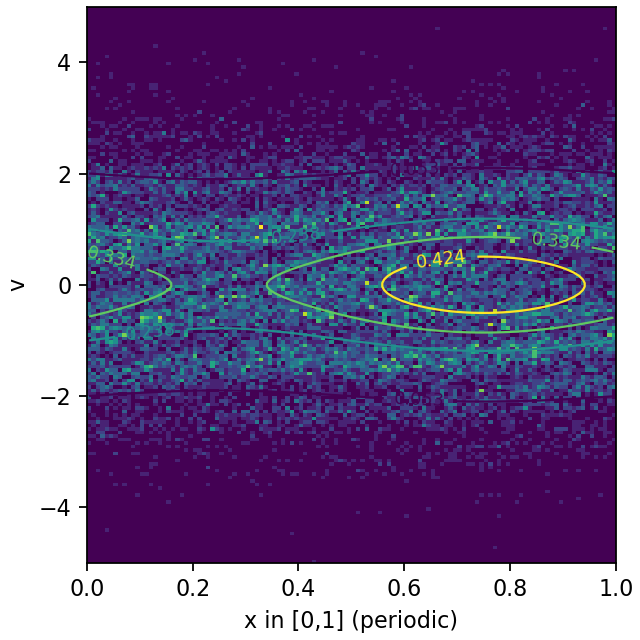}
        \caption{$t=1.00$}
    \end{subfigure}
    \begin{subfigure}{0.19\linewidth}
        \centering
        \includegraphics[width=\linewidth]{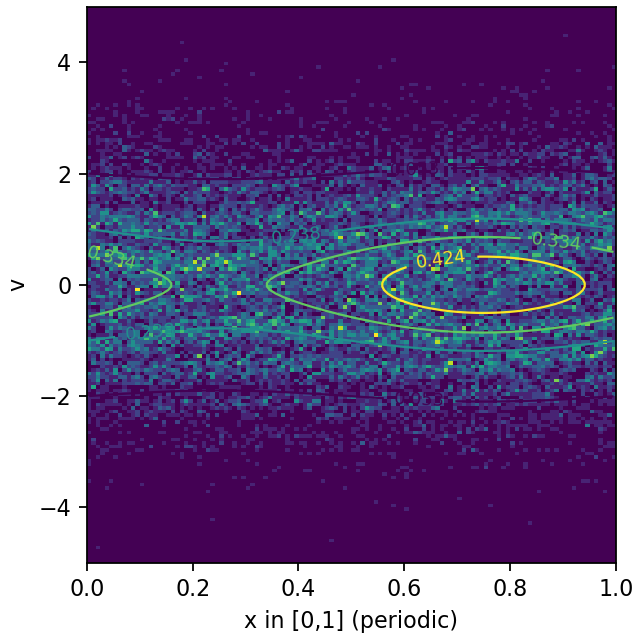}
        \caption{$t=1.50$}
    \end{subfigure}
    \begin{subfigure}{0.19\linewidth}
        \centering
        \includegraphics[width=\linewidth]{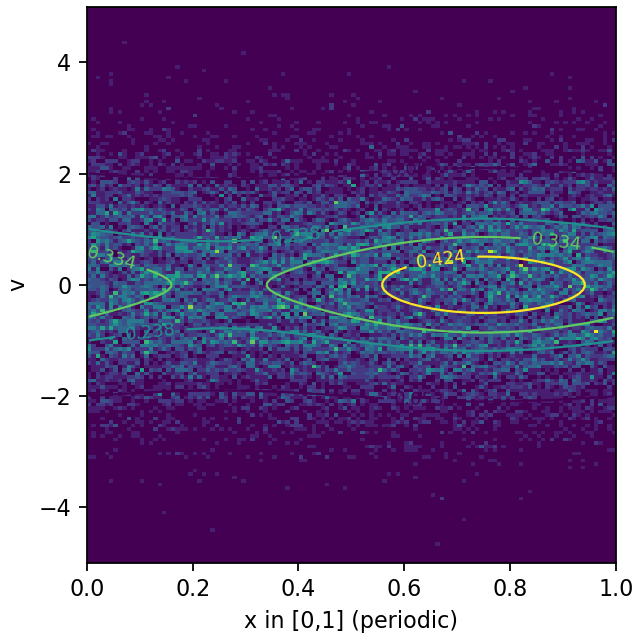}
        \caption{$t=2.00$}
    \end{subfigure}

    \caption{Experiment from Example 3. Evolution of the distribution from $t=0$ to $t=2$ with step size $0.1$, using the sine potential. The results show convergence toward the stationary distribution at $t=2$. 
Bright regions indicate areas of high particle density, darker regions indicate low density, and the contours 
represent the stationary solution.
}
    \label{fig:example2-quadratic}
\end{figure}

\begin{figure}[ht]
    \centering

    \begin{subfigure}{0.60\linewidth}
        \centering
        \includegraphics[width=\linewidth]{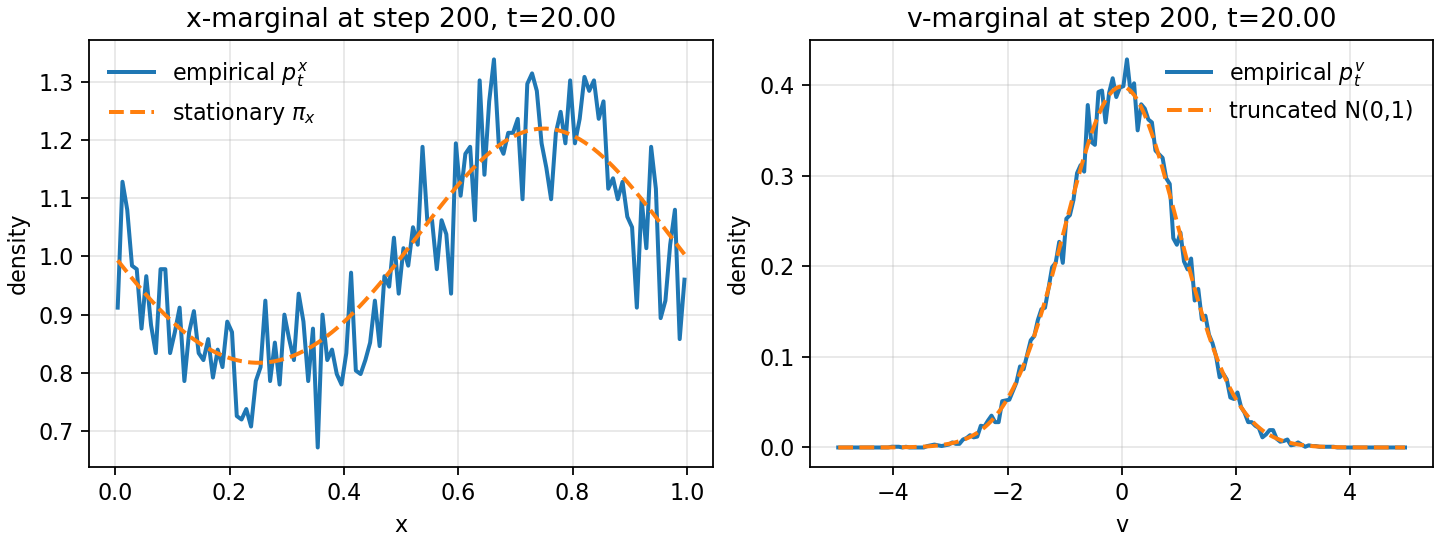}
        \caption{Sine potential}
    \end{subfigure}
    \begin{subfigure}{0.35\linewidth}
        \centering
        \includegraphics[width=\linewidth]{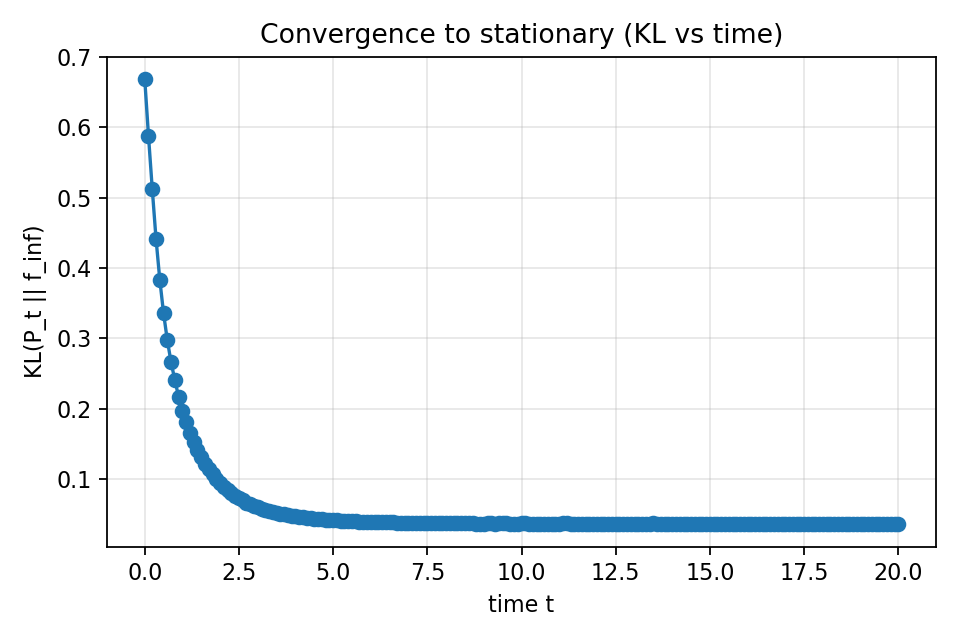}
        \caption{KL from a sine potential}
    \end{subfigure}
    \caption{Experiment from Example 3. The $x$- and $v$-marginal densities at $t = 20$ under the potential defined in \eqref{eq:potentials-linear}.
The right plot shows the convergence of the Kullback–Leibler divergence to the stationary distribution over the time interval $t \in [0,20]$ for the particle JKO dynamics under two external potentials.
}
    \label{fig:example2-quadratic-marginals}
\end{figure}

\bigskip

\paragraph{Example 4: Convergence to the stationary distribution in 3D.}
We extend Example~2, which was posed in one spatial dimension and one velocity dimension, to the three–dimensional setting in both $x$ and $v$ to demonstrate that the algorithm computes an accurate solution that converges toward the expected stationary distribution. For this 3D experiment, the external potential is
\[
  \phi(x) \;=\; \tfrac12 \|x\|_2^2 \;+\; \sum_{i=1}^3 \cos(2\pi x_i)\,,
\]
which combines a quadratic confining term with small coordinate–wise periodic bumps. The associated stationary density $f_\infty$ on $\mathbb{R}^3 \times \mathbb{R}^3$ is
\[
  f_\infty(x,v) \;\propto\; \exp\!\bigl(-\tfrac12 \|v\|_2^2 - \phi(x)\bigr)\,,
\]
so the $v$–marginal is $\mathcal N(0,I_3)$ and each coordinate $x_i$ has a one–dimensional stationary density proportional to $\exp(-\tfrac12 x_i^2 - \cos(2\pi x_i))$. We initialize particles from
\[
\begin{aligned}
    f^0(x,v) &=\frac{1}{2(2\pi)^{3}} \left( e^{-\frac{(v_1+1.5)^2}{2}} + e^{-\frac{(v_1-1.5)^2}{2}} \right) e^{-\frac{v_2^2+v_3^2}{2}}e^{-\frac{\|x\|^2}{2}}.
\end{aligned}
\]
To illustrate the distribution of particles over time, we project the data onto one-dimensional marginals for both the spatial and velocity coordinates. We then construct empirical histograms using 160 bins for both the $x$ and $v$ marginals to approximate the underlying densities. These are overlaid with the analytical stationary distributions to facilitate visual comparison. Across runs, we observe a monotone decrease of $\mathrm{D}_{\mathrm{KL}}(f_t\,\|\,f_\infty)$ 
together with increasing alignment of each marginal with its stationary counterpart, indicating convergence toward $f_\infty$. This behavior is illustrated in Figures~\ref{fig:3d-no-bdry-marginals} and \ref{fig:example3-quadratic-marginals-KL-no-bdry}.

Figure~\ref{fig:example3-quadratic-marginals-KL-no-bdry} further quantifies this trend for Experiment~4 over the time window $t\in[0,5]$ with time step $\Delta t=0.1$. We report the evolution of the KL divergence, which exhibits a steady monotone decay, alongside the corresponding marginal discrepancies. Consistently decreasing trends are observed across all coordinates, indicating that both spatial and velocity components relax toward equilibrium. The agreement between empirical marginals and their stationary targets at selected times provides additional evidence supporting convergence of the full distribution toward $f_\infty$.

\begin{figure}[t]
  \centering
  \begin{subfigure}{0.95\linewidth}
    \centering
    \includegraphics[width=\linewidth]{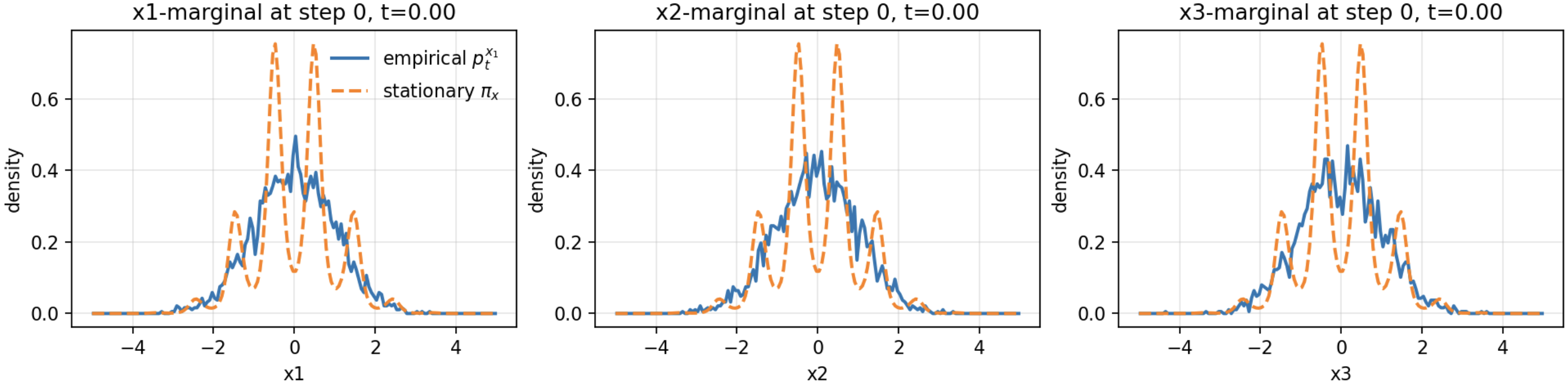}
    \caption{$t=0$}
    \label{fig:3d-no-bdry-t0}
  \end{subfigure}

  \vspace{0.8em}

  \begin{subfigure}{0.95\linewidth}
    \centering
    \includegraphics[width=\linewidth]{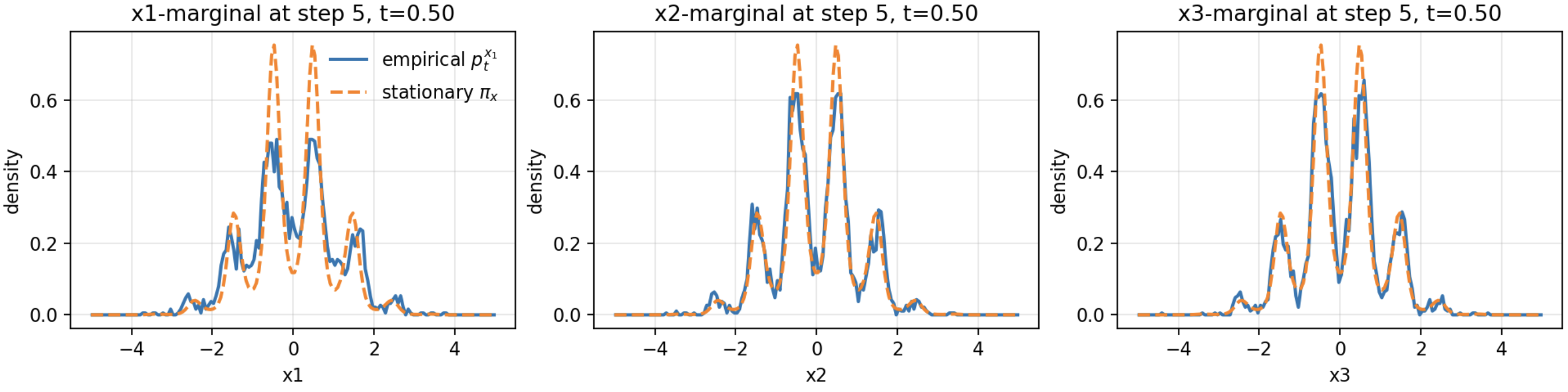}
    \caption{$t=0.5$}
    \label{fig:3d-no-bdry-t05}
  \end{subfigure}

  \vspace{0.8em}

  \begin{subfigure}{0.95\linewidth}
    \centering
    \includegraphics[width=\linewidth]{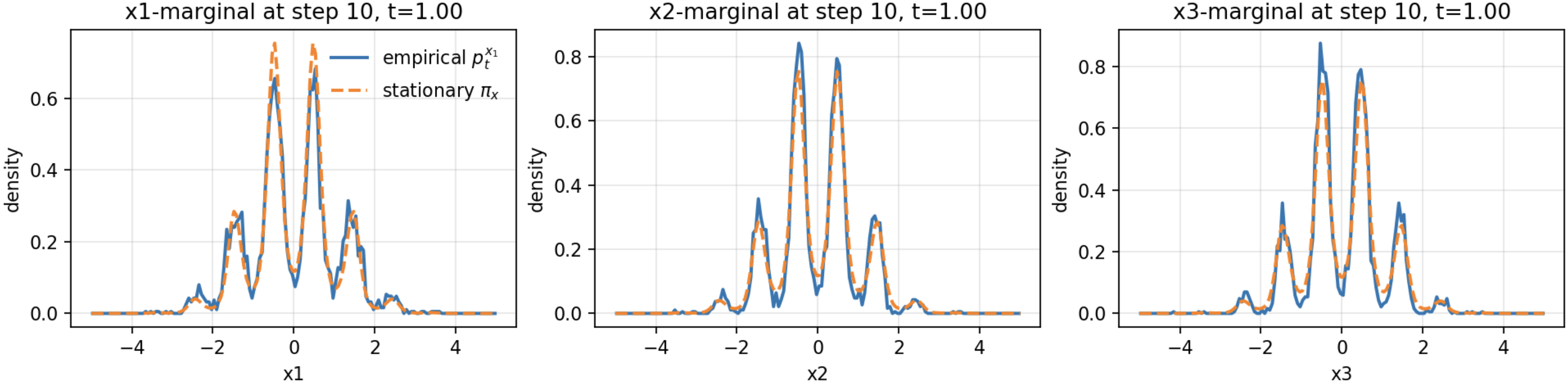}
    \caption{$t=1$}
    \label{fig:3d-no-bdry-t1}
  \end{subfigure}

  \caption{Evolution of coordinate marginals for a 3D position variable $x=(x_1,x_2,x_3)$ with 3D velocity $v$, computed with time step $\Delta t=0.1$. Each panel shows the one–dimensional marginals of $x_1$, $x_2$, and $x_3$ at the indicated time: (\subref{fig:3d-no-bdry-t0}) $t=0$, (\subref{fig:3d-no-bdry-t05}) $t=0.5$, (\subref{fig:3d-no-bdry-t1}) $t=1$.}
  \label{fig:3d-no-bdry-marginals}
\end{figure}

\begin{figure}[ht]
    \centering
    \begin{subfigure}{0.48\linewidth}
        \centering
        \includegraphics[width=\linewidth]{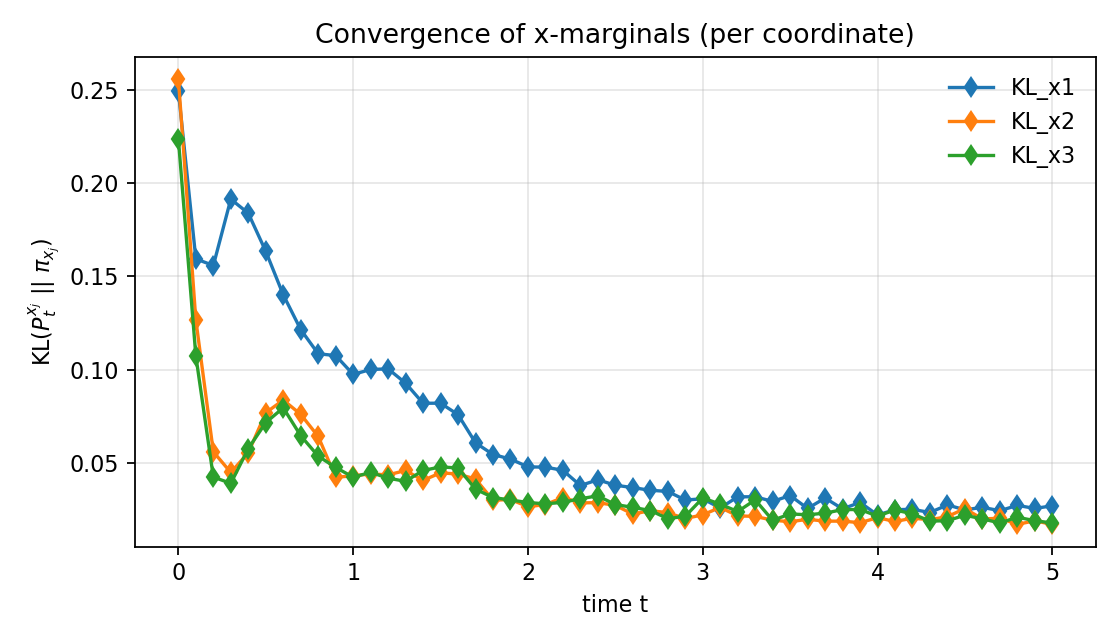}
        \caption{KL divergence over time $x$ marginals}
    \end{subfigure}
    \begin{subfigure}{0.48\linewidth}
        \centering
        \includegraphics[width=\linewidth]{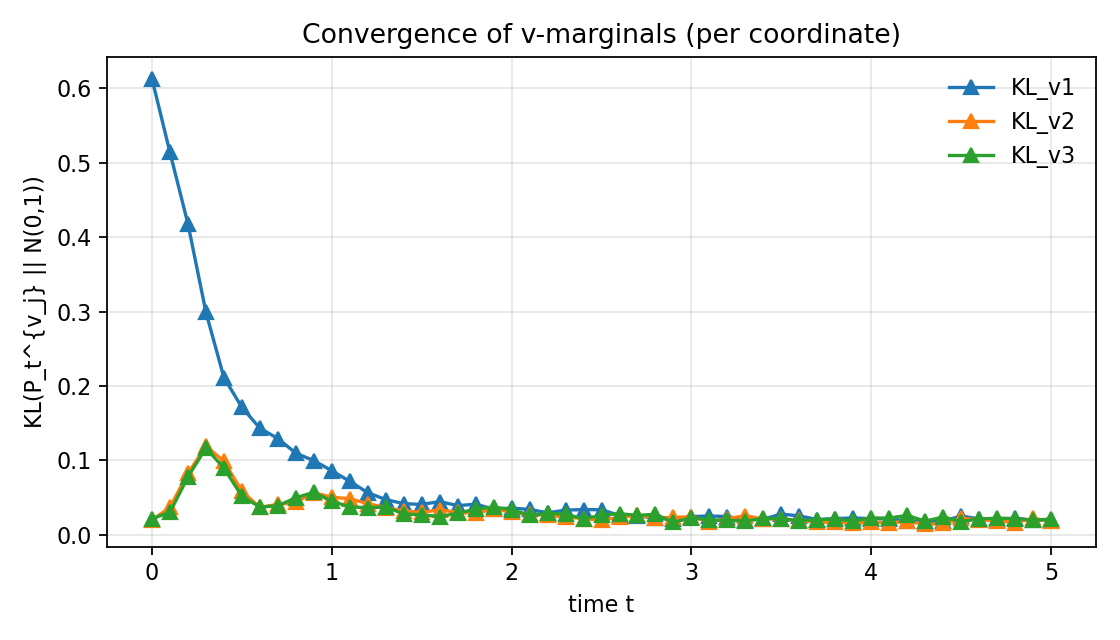}
        \caption{KL divergence over time $v$ marginals}
    \end{subfigure}
    \caption{Convergence of the Kullback--Leibler divergence, $x$ marginals, and $v$ marginals to the stationary distribution for $t \in [0,5]$ under the particle JKO dynamics, shown for Experiment~4.}
    \label{fig:example3-quadratic-marginals-KL-no-bdry}
\end{figure}

\subsection{Vlasov–Poisson–Fokker–Planck systems (PIC–JKO method)}\label{sec:vafp}

We study a 1D in space and 1D in velocity Vlasov–Poisson–Fokker–Planck (VPFP) system on a periodic domain. The dynamics are computed using a particle-in-cell (PIC) discretization for the kinetic density and a spectral Poisson solver for the self-consistent electric field. The evolution follows the proximal formulation described in~\Cref{sec:pic_jko}, where each JKO step is implemented through a neural control field that is trained at every time step.

The spatial domain is $x \in [0,8\pi)$ with periodic boundary conditions, discretized by $N_x = 128$ grid points with spacing $\Delta x = 8\pi / N_x$. Cell centers are given by $x_i = (i + \tfrac{1}{2})\Delta x$ for $i = 0, \dots, N_x-1$. The velocity variable $v \in \mathbb{R}$ remains unbounded in the dynamics. A time step of $\Delta t = 10^{-2}$ is used, and the simulation advances for $n_{\mathrm{steps}} = 200$ iterations, yielding a final time $T = 4$.

The distribution function $f(t,x,v)$ is represented by $N_p = 50,000$ equal-weight particles $(\bx_p, \bv_p, \omega)$. The initial spatial density is
\[
\rho^0(x) = \tfrac{1}{8\pi}(1 + \alpha \cos(kx))\,, \qquad \alpha = 0.1\,, \quad k = 0.2\,,
\]
and the initial phase-space distribution is
\[
f^{0}(x,v)
=
\frac{\rho^0(x)}{2\sqrt{2\pi}\,0.1}
\left(
e^{-\frac{(v-0.3)^2}{2(0.1)^2}}
+
e^{-\frac{(v+0.3)^2}{2(0.1)^2}}
\right)\,,
\]
where the marginal in $v$ is a symmetric mixture of Gaussians centered at
$\pm 0.3$ with standard deviation $0.1$ (variance $0.01$).

To sample particles from the nonuniform spatial profile $\rho^0(x)$, we use inverse transform sampling. The density is first evaluated on the discrete grid $\{x_i\}$ and normalized to form a probability mass function $p_i = \rho^0(x_i) / \sum_j \rho^0(x_j)$. The corresponding cumulative distribution function $P_i = \sum_{j \le i} p_j$ maps uniform random variables $u \sim \mathrm{Uniform}(0,1)$ to grid indices $i$ satisfying $P_{i-1} < u \le P_i$, thereby producing samples consistent with $\rho^0(x)$. Each chosen grid point $x_i$ is then perturbed by a small random displacement within $[-\tfrac{1}{2}\Delta x, \tfrac{1}{2}\Delta x]$ to remove grid bias. The velocity samples are drawn independently from a normal distribution $v_p \sim \mathcal N(0, T_0)$ with $T_0 = 1$.

Each particle contributes to the grid density through a piecewise-linear basis function
\[
S(x) = \max\!\big(0,\, 1 - |x|/\Delta x\big)\,,
\]
which spreads the particle weight over neighboring grid points. The grid density at cell $h$ is computed as
\[
\rho_h = \frac{1}{\Delta x}  \sum_{p=1}^{N}\frac{1}{N}\, S(x_h - x_p)\,.
\]
At every time step, we solve the periodic Poisson equation
\[
-\partial_{xx}\phi = \rho - 1\,, \qquad \bE = -\partial_x \phi\,,
\]
using a spectral method. The discrete Fourier frequencies are
$k_j = 2\pi\,\mathrm{fftfreq}(N_x, \Delta x)$\,,
and the solution is obtained as
\[
\widehat{\phi}(k) = \frac{\widehat{(\rho-1)}(k)}{k^2}\quad (k \neq 0)\,, \qquad
\widehat{\phi}(0) = 0\,, \qquad
\widehat{\bE}(k) = -\,\mathrm{i}k\,\widehat{\phi}(k)\,,
\]
followed by an inverse transform to recover $\phi$ and $\bE$ in physical space. The electric field is then interpolated to particle locations using the same linear basis function to ensure consistency between deposition and interpolation.

Let $t^n = n\Delta t$. One JKO update advances particles according to
\[
\bx_p^{n+1} = \mathrm{wrap}_x(\bx_p^n + \bv_p^n \Delta t)\,, \qquad
\bv_p^{n+1} =\bv_p^n + \bE^{n+1}(\bx_p^{n+1}) \Delta t + u_\theta(\bx_p^n, \bv_p^n) \Delta t\,,
\]
where $\mathrm{wrap}_x(x) = (x \bmod 8\pi)$ enforces periodicity.
The control field $u_\theta:\mathbb{R}^2 \to \mathbb{R}$ is a feedforward neural network with inputs $(\cos(x/2), \sin(x/2), v)$, two hidden layers of width 32 with \texttt{LeakyReLU} activations, and a linear output layer.
The mapping $T(x,v) = (x + v\Delta t,\, v + \bE^{n+1}(x + v\Delta t)\Delta t + u_\theta(x,v)\Delta t)$
defines the discrete transport, and its Jacobian determinant is computed using automatic differentiation.

At each time step and for a fixed parameter $\epsilon>0$, the network parameters $\theta$ are optimized by minimizing
{\[
\mathcal{J}(\theta)
= \frac{\Delta t}{2}\,\sum_{p=1}^{N_p}[u_\theta(\bx_p^n,\bv_p^n)^2]
+ \epsilon\,\sum_{p=1}^{N_p}\!\big[\tfrac{1}{2}(\bv_p^{n+1})^2 + T_0 \log f_p^{n+1}\big]
+ \sum_i (\bE_i^{n+1})^2 \Delta x\,,
\]}
where the expectations are empirical averages over the particles. Each minimization uses 50 Adam iterations with a learning rate of $10^ {-3}$, after which the optimized control is applied to advance the system.

The field energy
$
\mathcal{E}_{\mathrm{field}}(t) = \sum_i \bE_i^2 \Delta x
$
is recorded at every step, and a phase-space histogram $H_{i,j}$ estimating $f(t, x_i, v_j)$ is saved every ten steps. Two parameter regimes are studied, $\epsilon \in \{10, 10^{-2}\}$, with all other settings fixed:
\[
\begin{gathered}
N_x = 128\,,\quad N_p = 50{,}000\,,\quad \Delta t = 10^{-2}\,,\quad n_{\mathrm{steps}} = 200\,,\quad N_{\mathrm{opt}} = 50\,,\quad \text{lr} = 10^{-3}\,.
\end{gathered}
\]

For the weakly collisional case $\epsilon = 5 \times 10^{-3}$, the field energy $\int |\bE|^2\,\rd x$ exhibits mild oscillations paired with a slow overall decay (Figure~\ref{fig:field-energy-two}, left). The initial transient shows a rapid drop followed by a substantial peak near $t \approx 0.8$, driven by the stretching of the particle distribution and weak damping. After $t \approx 1$, the energy fluctuates persistently between $10^{-3}$ and $10^{-2}$. For the strongly collisional case $\epsilon = 10$, the field energy exhibits a highly noisy but rapid overall decay (Figure~\ref{fig:field-energy-two}, right), dropping below $10^{-3}$ by $t \approx 2$ and approaching $10^{-4}$ by $t=4$. While the decay is not strictly steady due to high-frequency fluctuations, the clear downward trajectory demonstrates that the larger regularization parameter enhances velocity diffusion. 

Figure~\ref{fig:phase-evolution-two} further illustrates these contrasting behaviors by tracking the discrete particle positions in phase space at distinct time snapshots. For the weakly collisional case with $\epsilon = 0.005$ (top row), the initial velocity bands roll up to form a large, coherent phase-space vortex by $t = 0.6$. This large-scale structure, characterized by a distinct central empty region, persists even at the final time $t = 4.0$. This persistence shows that the particles are trapped and maintain their organized motion due to the weak damping. Conversely, for the strongly collisional case with $\epsilon = 10$ (bottom row), the initial bands undergo severe stretching and folding by $t = 0.6$, thereby bypassing the formation of a stable vortex. By $t = 4.0$, the particles have scattered significantly and filled the phase-space regions that remained empty in the low-$\epsilon$ scenario. These scatter plots highlight how a stronger collision dictates the behavior by breaking down organized particle structures and rapidly driving the discrete system toward a well-mixed equilibrium.

\begin{figure}[ht!]
  \centering
  \begin{subfigure}[t]{0.49\linewidth}
    \centering
    \includegraphics[width=\linewidth]{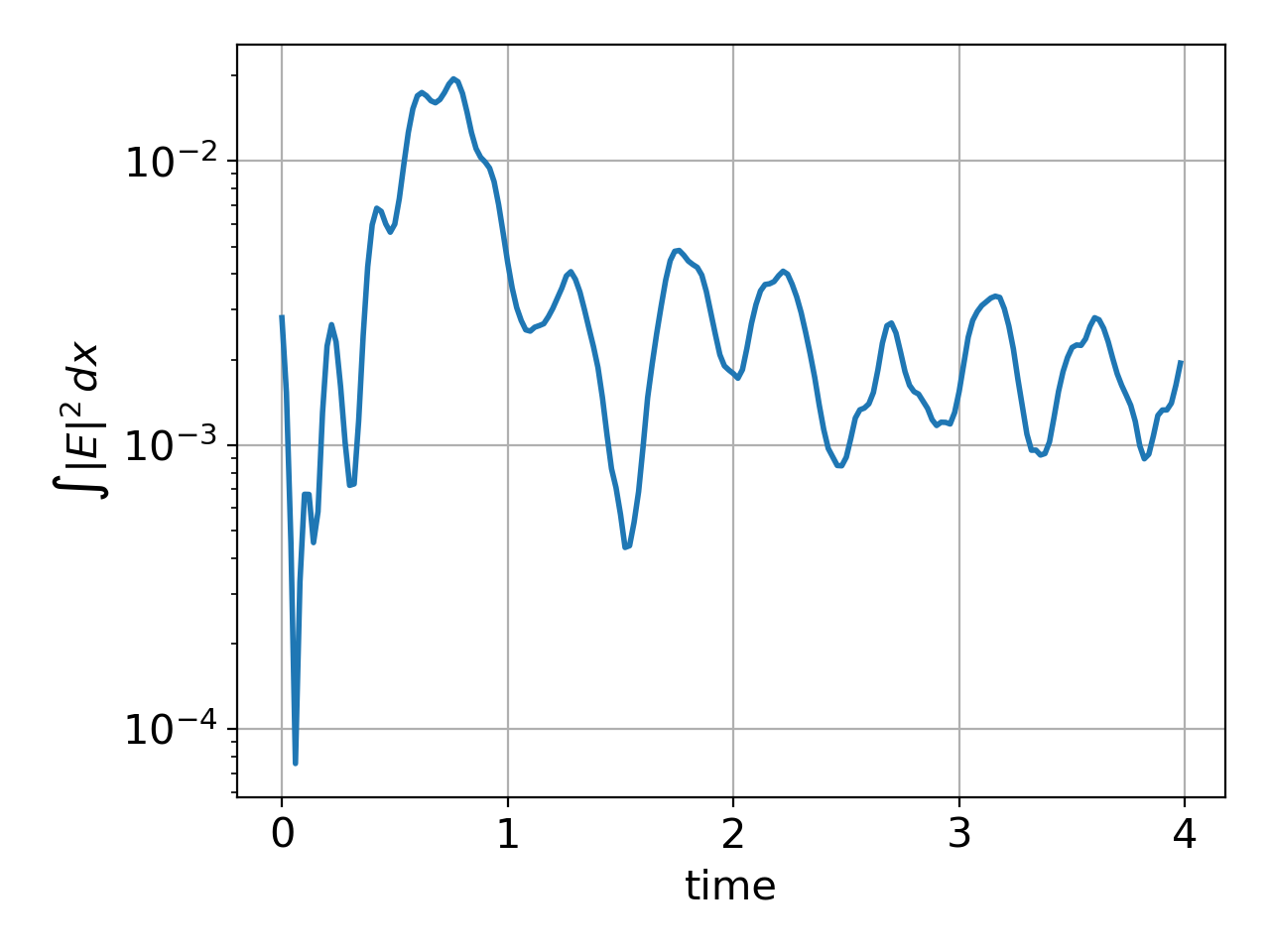}
    \subcaption{$\epsilon=5\times10^{-3}$}
  \end{subfigure}\hfill
  \begin{subfigure}[t]{0.49\linewidth}
    \centering
    \includegraphics[width=\linewidth]{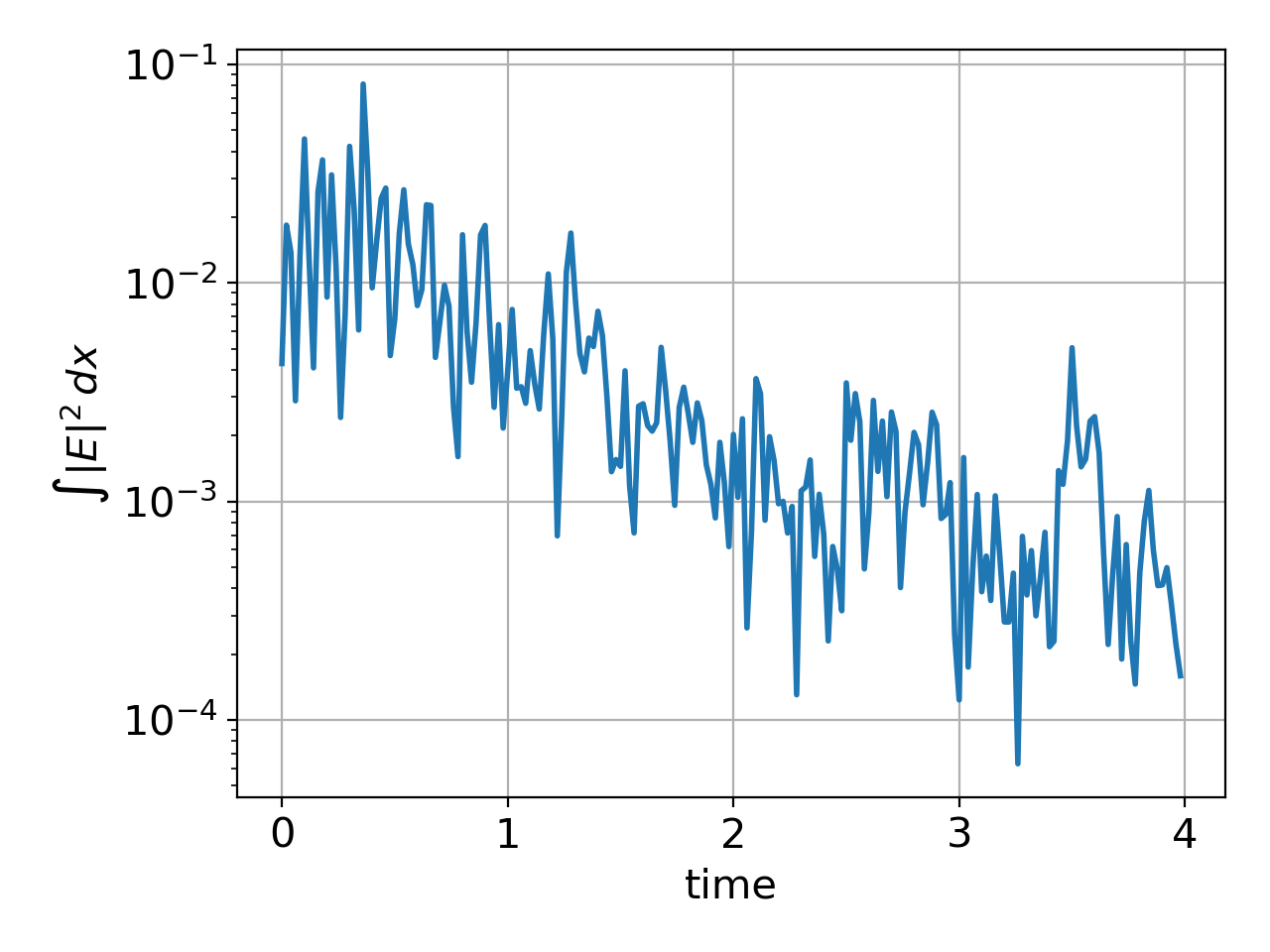}
    \subcaption{$\epsilon=10$}
  \end{subfigure}
  \caption{Experiment from \Cref{sec:vafp}.Comparison of field energy decay $\int |E|^2\,dx$ over time for $\epsilon=5\times10^{-3}$ (left) and $\epsilon=10$ (right).}
  \label{fig:field-energy-two}
\end{figure}

\begin{figure}[ht!]
  \centering

  \begin{subfigure}[t]{0.24\linewidth}
    \centering
    \includegraphics[width=\linewidth]{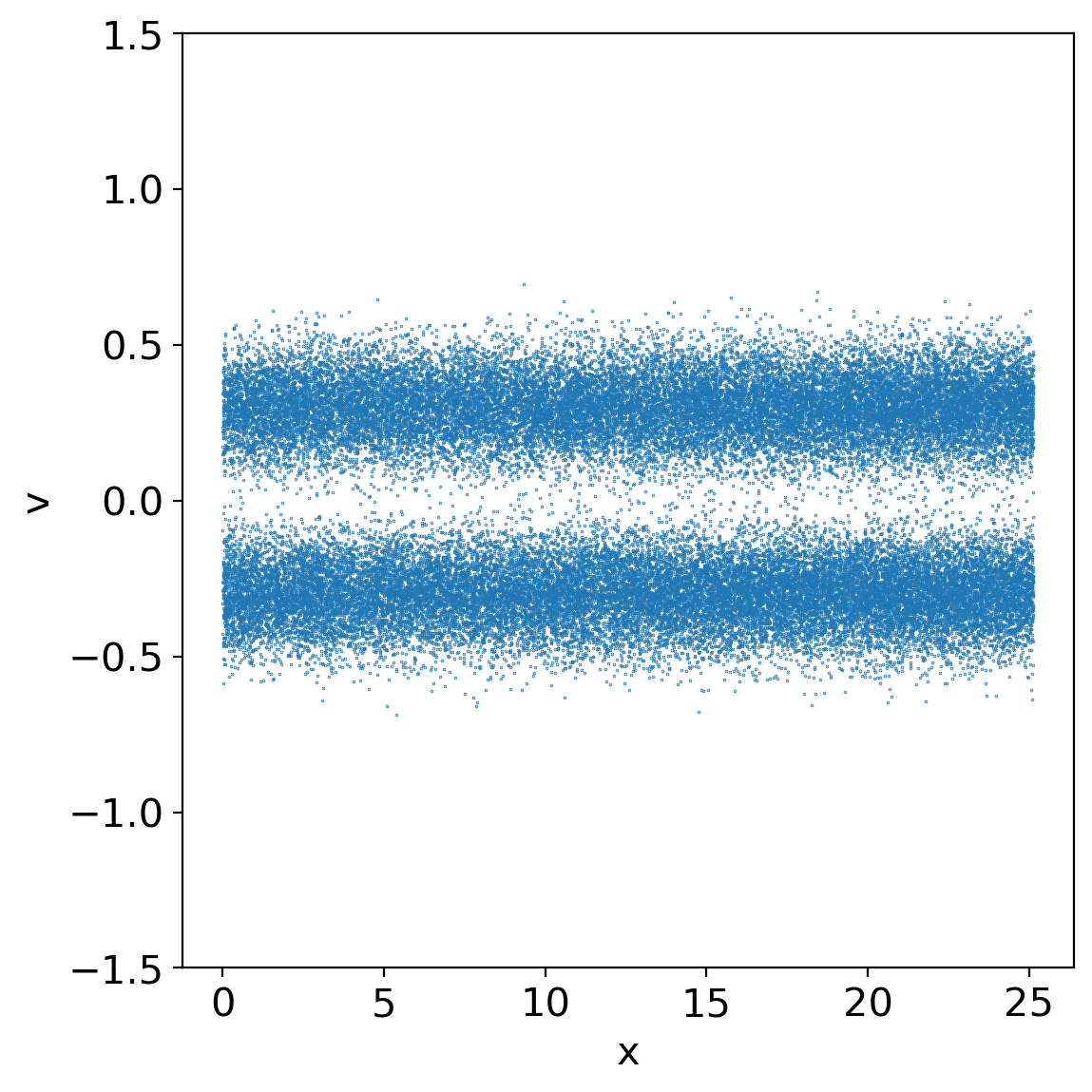}
    \subcaption{$t=0.0$}
  \end{subfigure}\hfill
  \begin{subfigure}[t]{0.24\linewidth}
    \centering
    \includegraphics[width=\linewidth]{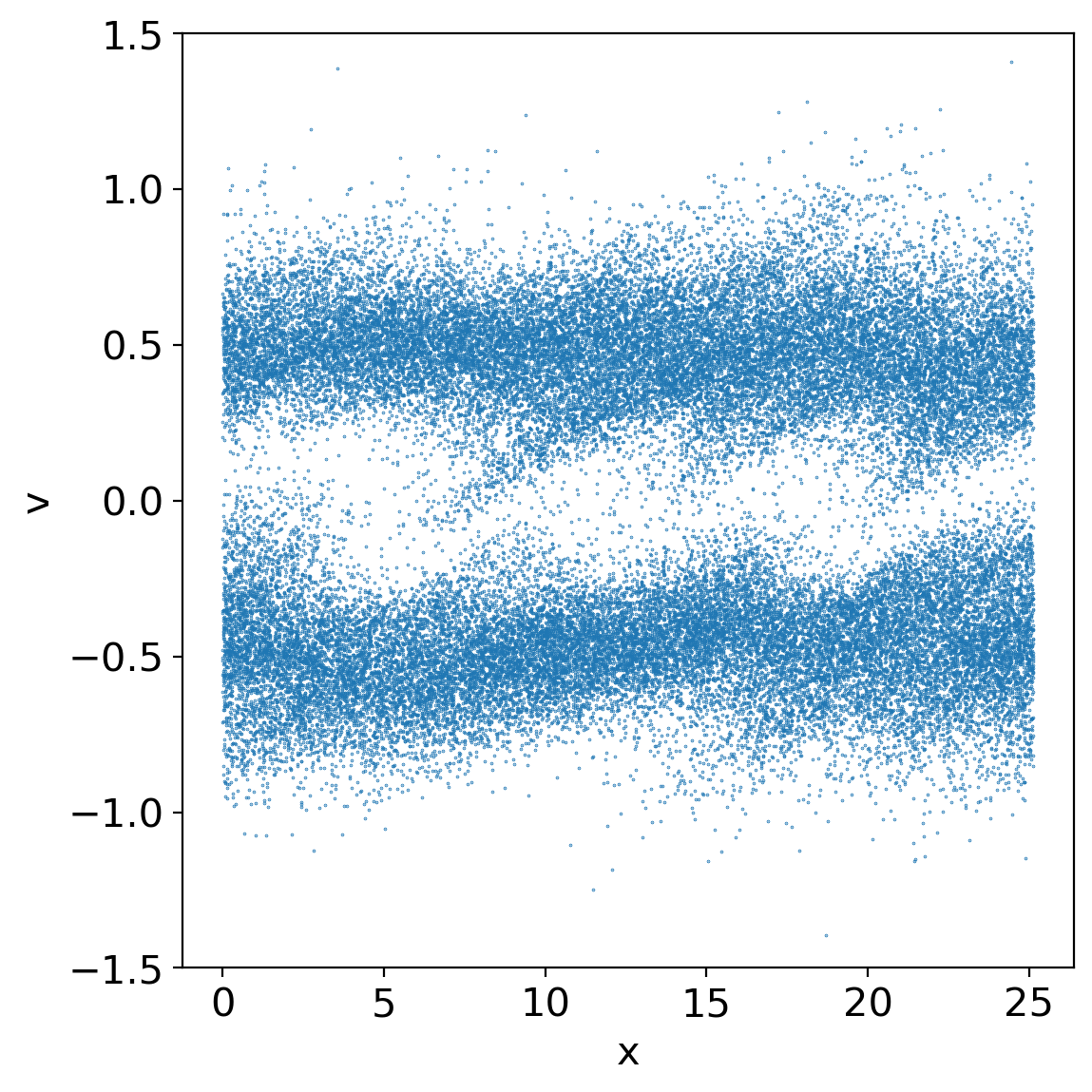}
    \subcaption{$t=0.2$}
  \end{subfigure}\hfill
  \begin{subfigure}[t]{0.24\linewidth}
    \centering
    \includegraphics[width=\linewidth]{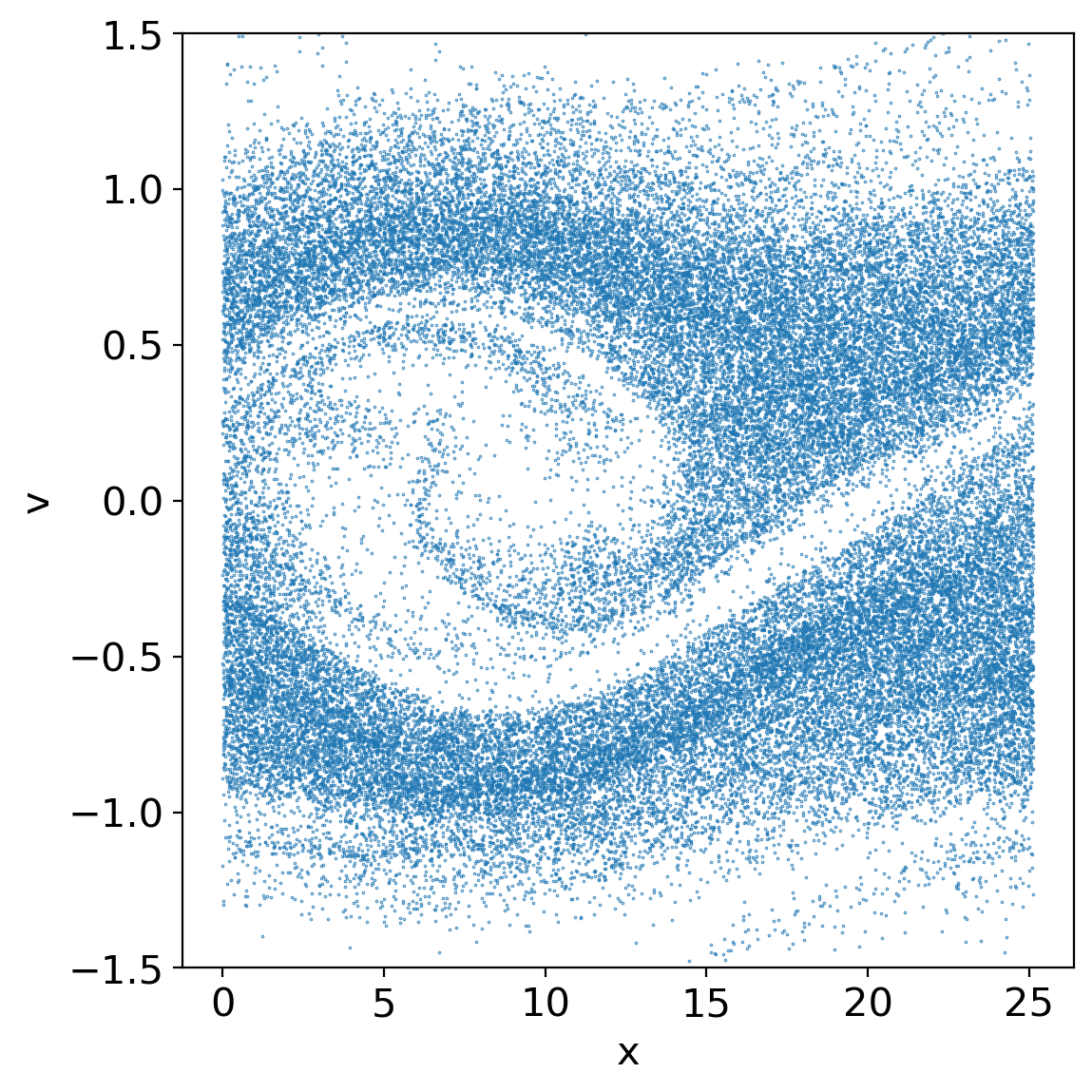}
    \subcaption{$t=0.6$}
  \end{subfigure}\hfill
  \begin{subfigure}[t]{0.24\linewidth}
    \centering
    \includegraphics[width=\linewidth]{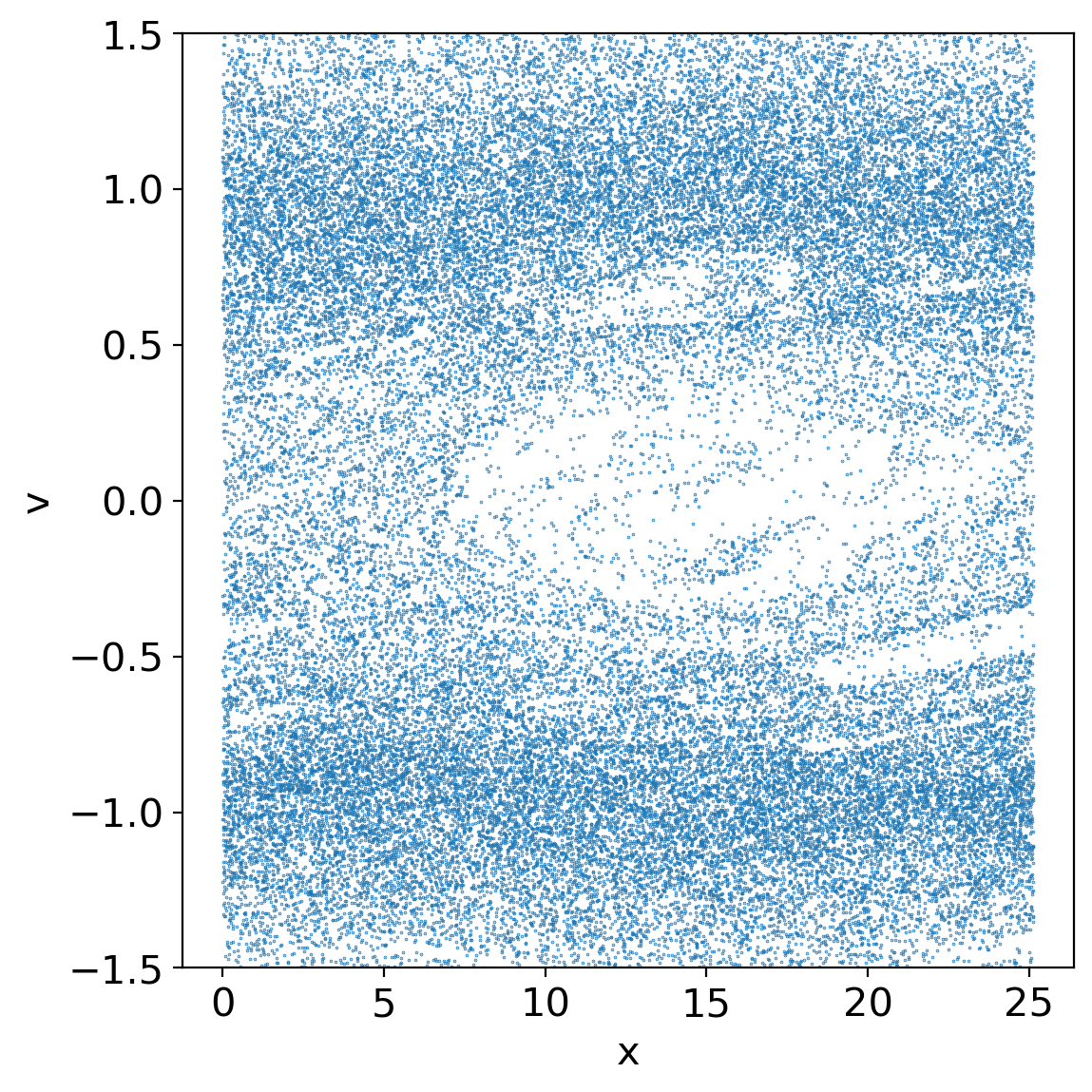}
    \subcaption{$t=4.0$}
  \end{subfigure}\hfill

  \vspace{0.6em}

  \begin{subfigure}[t]{0.24\linewidth}
    \centering
    \includegraphics[width=\linewidth]{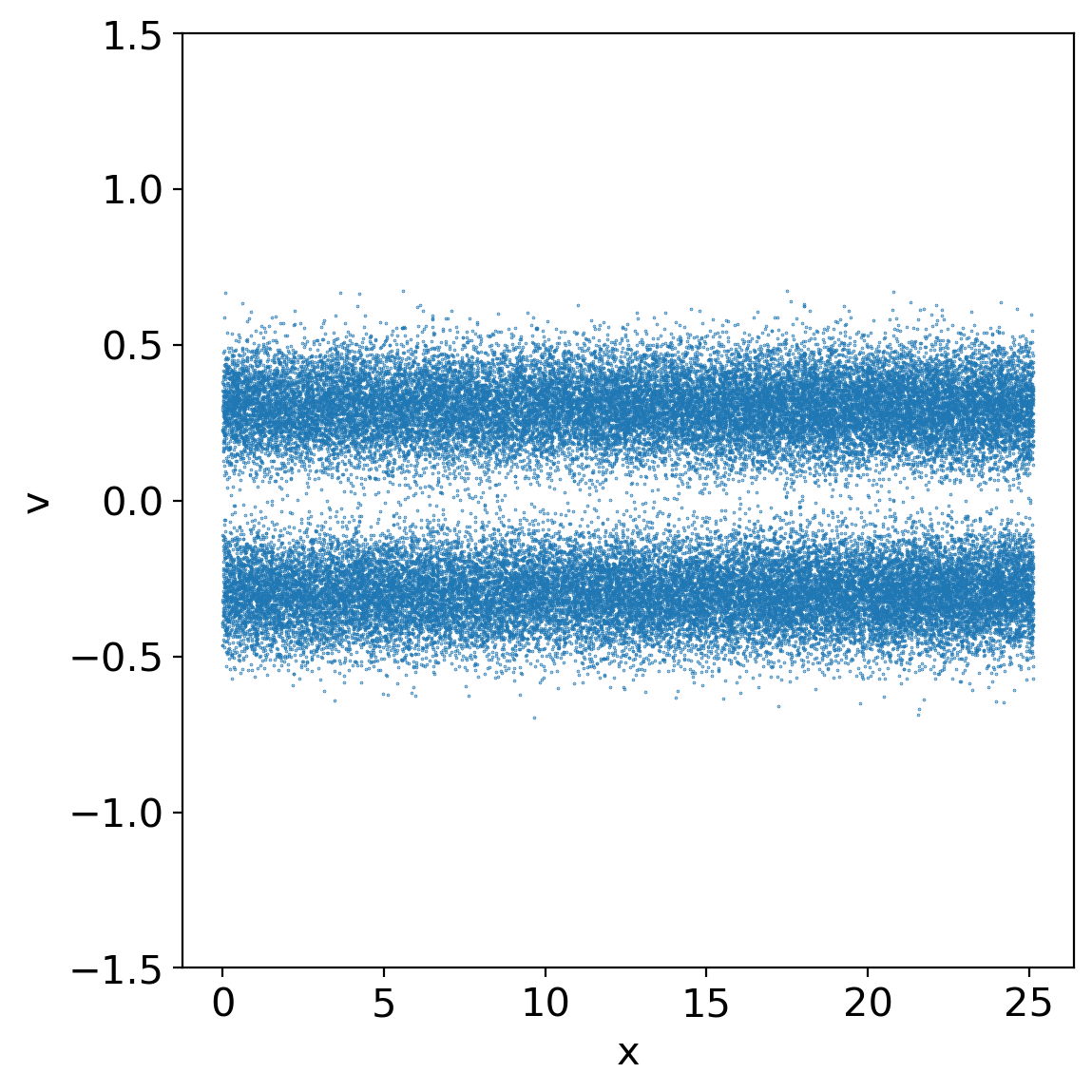}
    \subcaption{$t=0.0$}
  \end{subfigure}\hfill
  \begin{subfigure}[t]{0.24\linewidth}
    \centering
    \includegraphics[width=\linewidth]{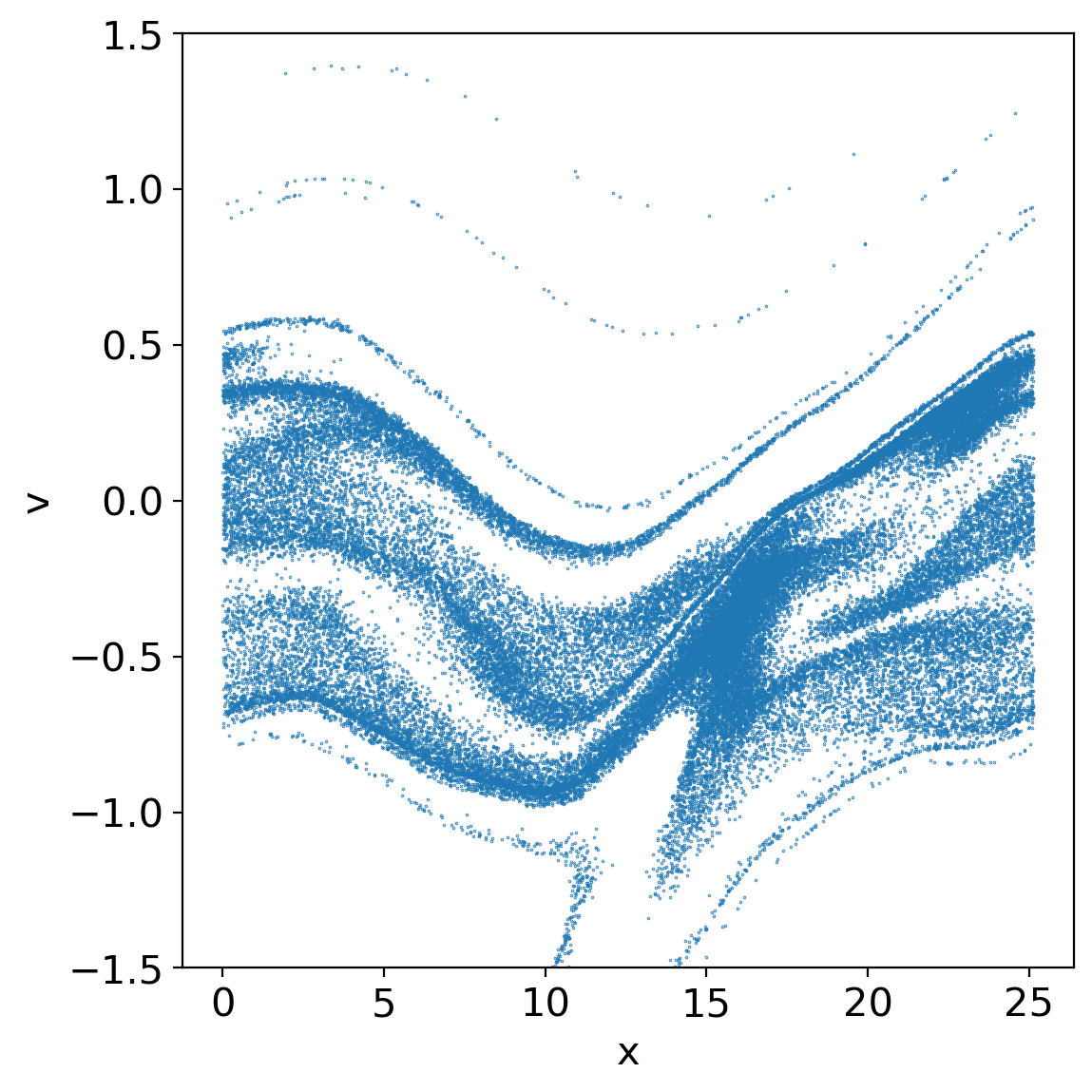}
    \subcaption{$t=0.2$}
  \end{subfigure}\hfill
  \begin{subfigure}[t]{0.24\linewidth}
    \centering
    \includegraphics[width=\linewidth]{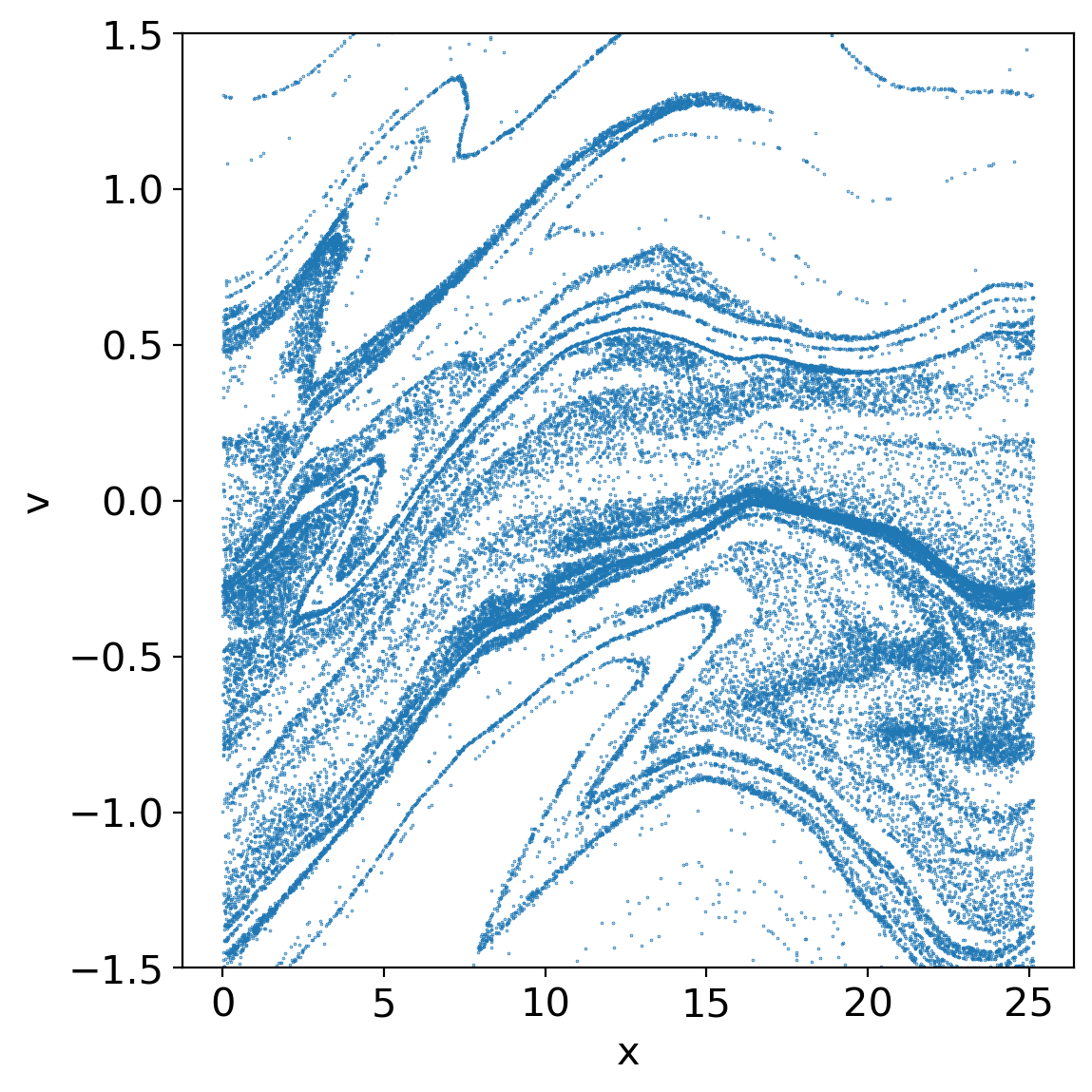}
    \subcaption{$t=0.6$}
  \end{subfigure}\hfill
  \begin{subfigure}[t]{0.24\linewidth}
    \centering
    \includegraphics[width=\linewidth]{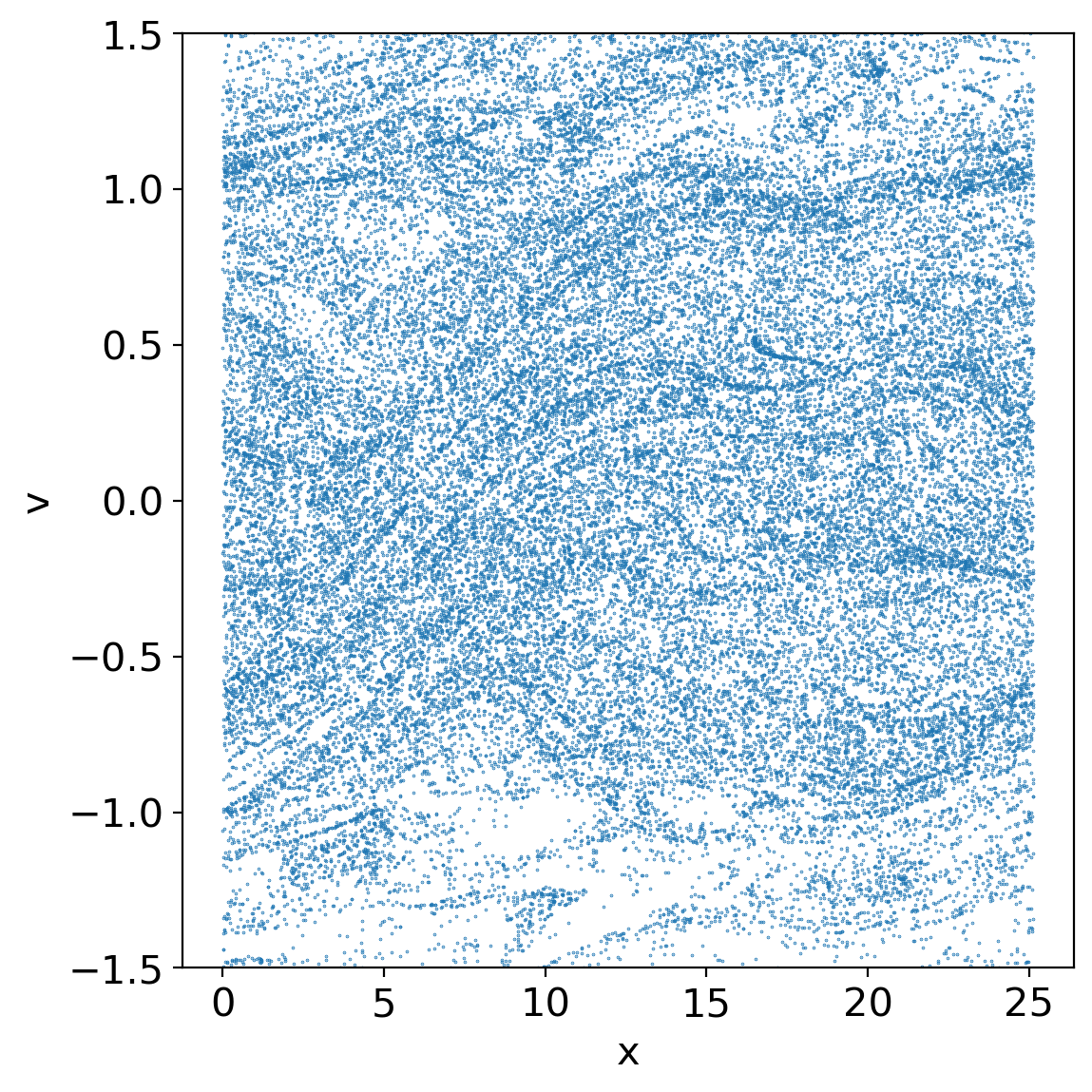}
    \subcaption{$t=4.0$}
  \end{subfigure}\hfill

  \caption{Experiment from \Cref{sec:vafp}. Phase-space density $f(t,x,v)$ for $\epsilon=0.005$ (top row) and $\epsilon=10$ (bottom row) at different times.  }
  \label{fig:phase-evolution-two}
\end{figure}

\bigskip
\paragraph{Extended experiment: $1$D in space and $3$D in velocity.}
To assess the performance of the proposed JKO-based scheme in higher-dimensional phase space, we extend the one-dimensional velocity experiment to a setting with one spatial variable and three velocity components, $x\in[0,8\pi)$ and $v=(v_1,v_2,v_3)\in\mathbb{R}^3$. The spatial domain is periodic, and the electric field is determined by the one-dimensional Poisson equation in $x$, so that the field acts only on the first velocity component $v_1$.

The initial distribution is chosen as
\begin{equation} \label{0321}
    f^{0}(x,v) = \frac{\rho^{0}(x)}{2(2\pi\sigma^2)^{3/2}} \left( e^{-\frac{(v_1-1.5)^2}{2\sigma^2}} + e^{-\frac{(v_1+1.5)^2}{2\sigma^2}} \right) e^{-\frac{v_2^2 + v_3^2}{2\sigma^2}}\,, \qquad \sigma=0.1. 
\end{equation}
The spatial marginal is prescribed by
\begin{equation*}
\rho^{0}(x)
=
\tfrac{1}{8\pi}\bigl(1+0.005\cos(2\pi x)\bigr)\,,
\end{equation*}
so that $\int_{\mathbb{R}^3} f^{0}(x,v)\,\rd \bv=\rho^{0}(x)$.

This configuration represents two counter-propagating beams in the $v_1$ direction with small thermal spread, combined with isotropic Gaussian fluctuations in the transverse velocity components $v_2$ and $v_3$. Compared with the $1D$x--$1D$v experiment, the phase space dimension increases from two to four, providing a nontrivial test of the scalability of the method.

Initial particle positions are sampled according to the discrete approximation of $\rho^{0}(x)$ on the spatial grid, followed by uniform jitter within each cell. Velocities are drawn from the mixture distribution in~\eqref{0321}. The initial particle weights are assigned according to $f^{0}(x,v)$, ensuring consistency with the prescribed density.

The neural control field $\bu_\theta(x,v)\in\mathbb{R}^3$ is approximated by a fully connected feedforward neural network. To respect the periodicity in the spatial variable, the input layer consists of the five features
\[
\bigl(\cos(x/4),\;\sin(x/4),\;v_1,\;v_2,\;v_3\bigr)\,,
\]
which provides a smooth embedding of the spatial coordinate and the three velocity components. The network contains two hidden layers with $64$ neurons each and \texttt{LeakyReLU} activation functions, followed by a linear output layer producing a three-dimensional vector corresponding to the velocity components of the control field.

All remaining parameters, including the time step, grid resolution, and optimization settings, are identical to those used in the $1$D--$1$D experiment, except that the terminal time is set to $T=4$ instead of $T=4$ in this experiment. In particular, we consider both weak and strong regularization regimes to facilitate direct comparison.

This extended experiment demonstrates that the proposed algorithm remains stable and accurate in higher-dimensional velocity spaces, while preserving the qualitative kinetic structures observed in lower dimensions. In particular, the weakly regularized case exhibits sustained oscillations, while the strongly regularized case displays rapid monotone decay, consistent with the behavior observed in the $1$D--$1$D experiment. As in the one-dimensional velocity setting, a coherent vortex structure also emerges in the weakly regularized regime, indicating the onset of nonlinear trapping and beam--beam interaction.

Figure~\ref{fig:phase-evolution-two-3d} illustrates the evolution of the phase-space density projected onto the $(x,v_1)$ plane for both $\epsilon=0.005$ and $\epsilon=10$. For $\epsilon=0.005$, fine filamentary structures, beam interactions, and the formation of a rotating vortex persist over time, whereas for $\epsilon=10$ the distribution rapidly smooths and becomes unimodal. Together, these results confirm that the proposed method successfully captures filamentation, beam interaction, vortex formation, and long-time relaxation in the presence of transverse velocity fluctuations, illustrating its robustness and scalability with increasing dimension.

\begin{figure}[ht!]
  \centering

  \begin{subfigure}[t]{0.24\linewidth}
    \centering
    \includegraphics[width=\linewidth]{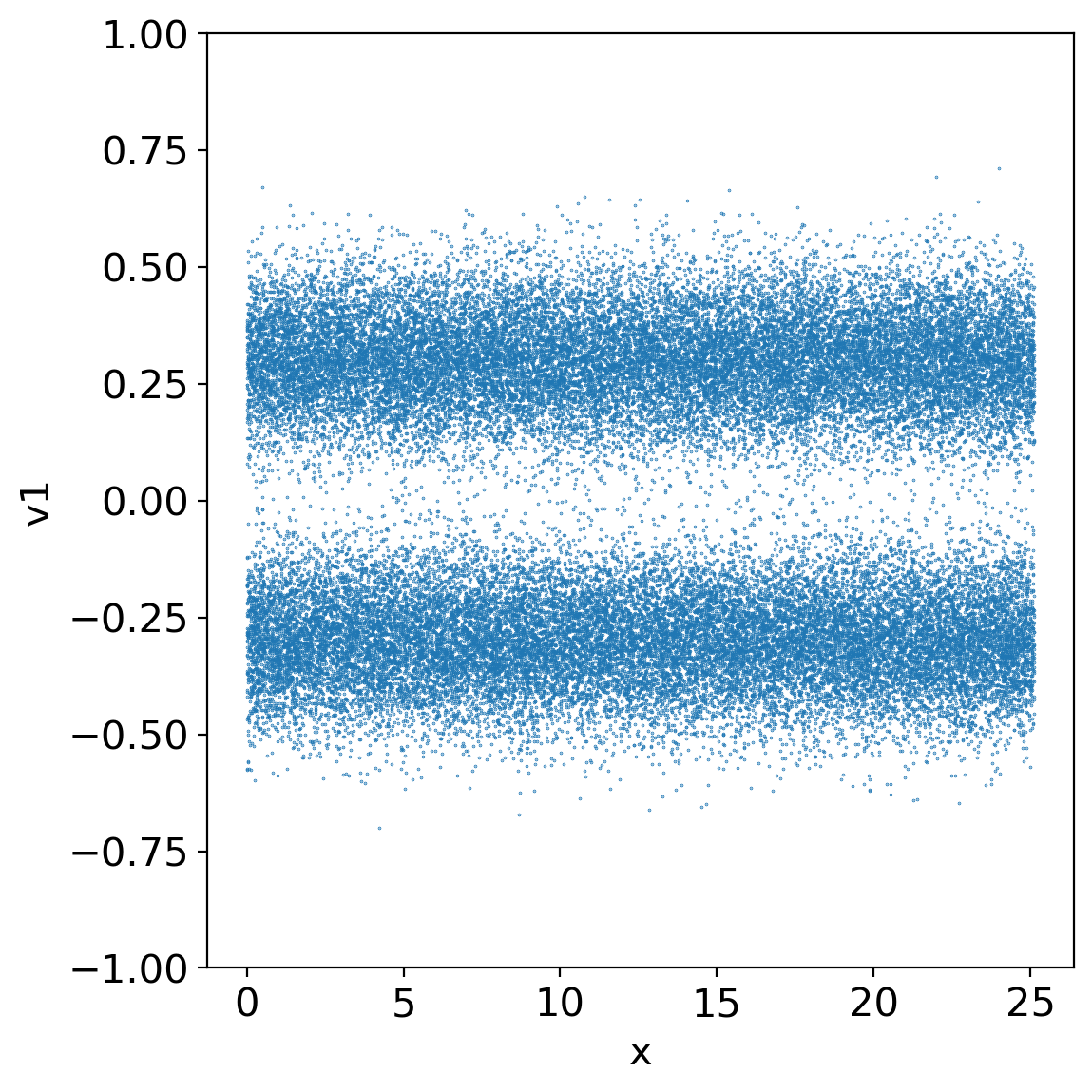}
    \subcaption{$t=0.0$}
  \end{subfigure}\hfill
  \begin{subfigure}[t]{0.24\linewidth}
    \centering
    \includegraphics[width=\linewidth]{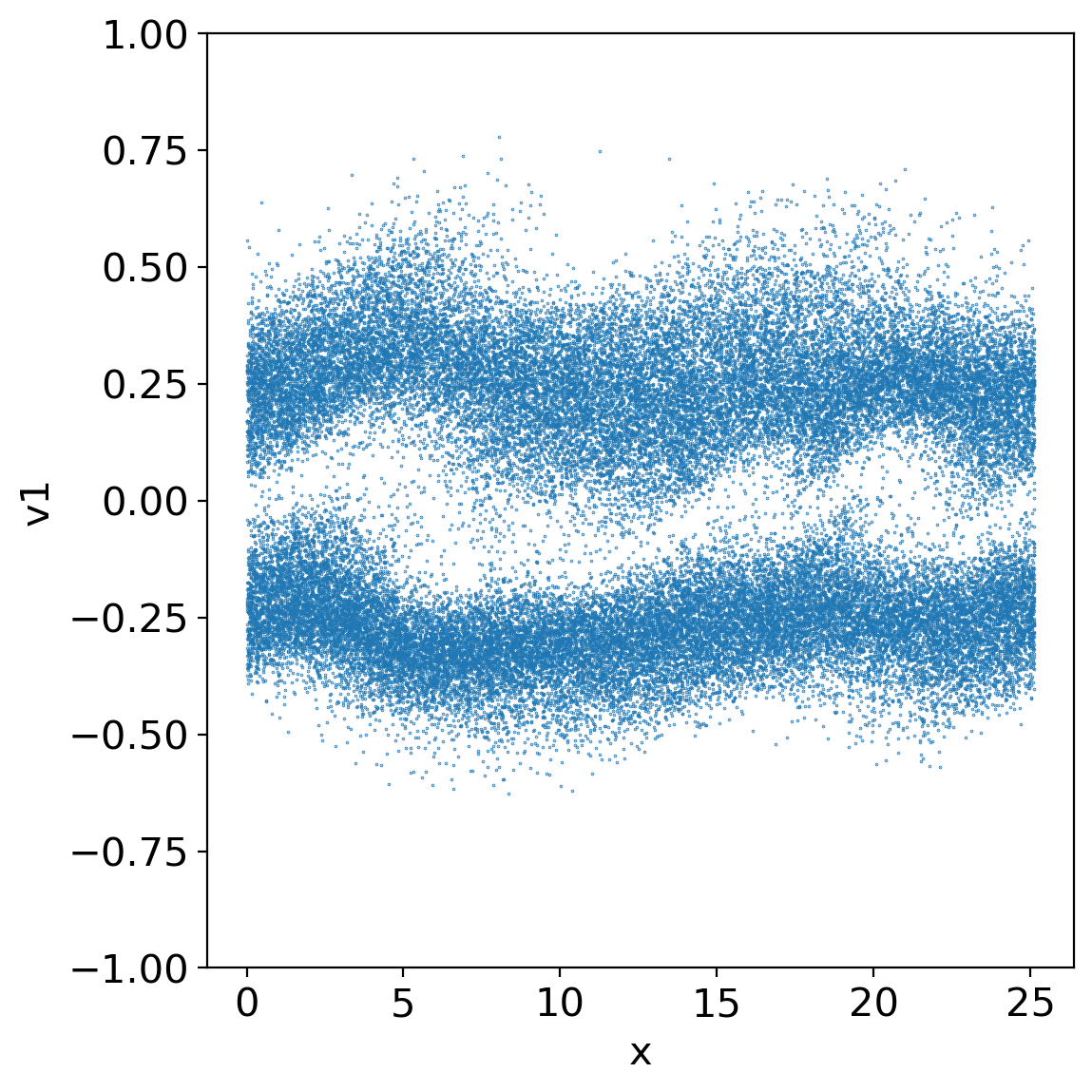}
    \subcaption{$t=0.4$}
  \end{subfigure}\hfill
  \begin{subfigure}[t]{0.24\linewidth}
    \centering
    \includegraphics[width=\linewidth]{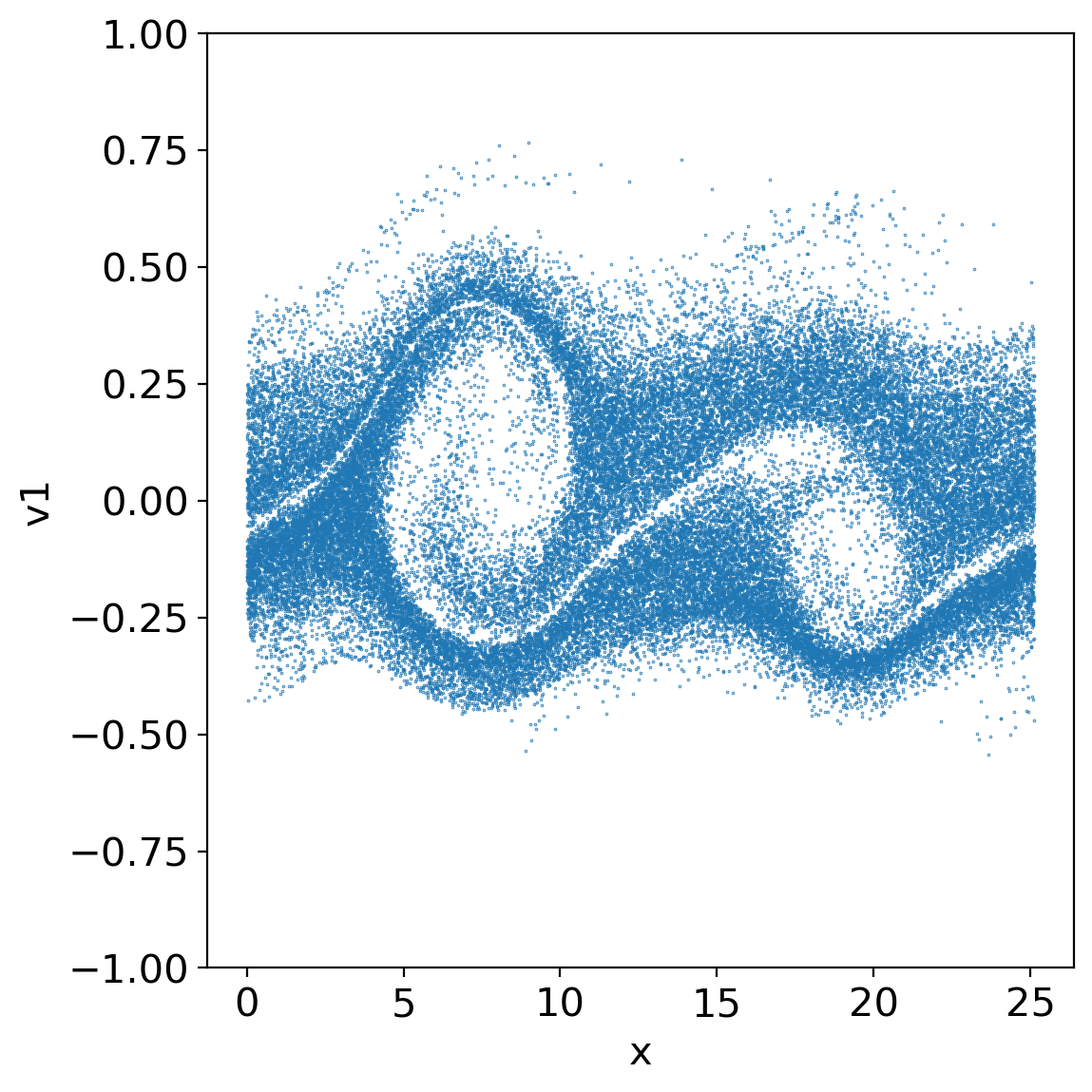}
    \subcaption{$t=1.2$}
  \end{subfigure}\hfill
  \begin{subfigure}[t]{0.24\linewidth}
    \centering
    \includegraphics[width=\linewidth]{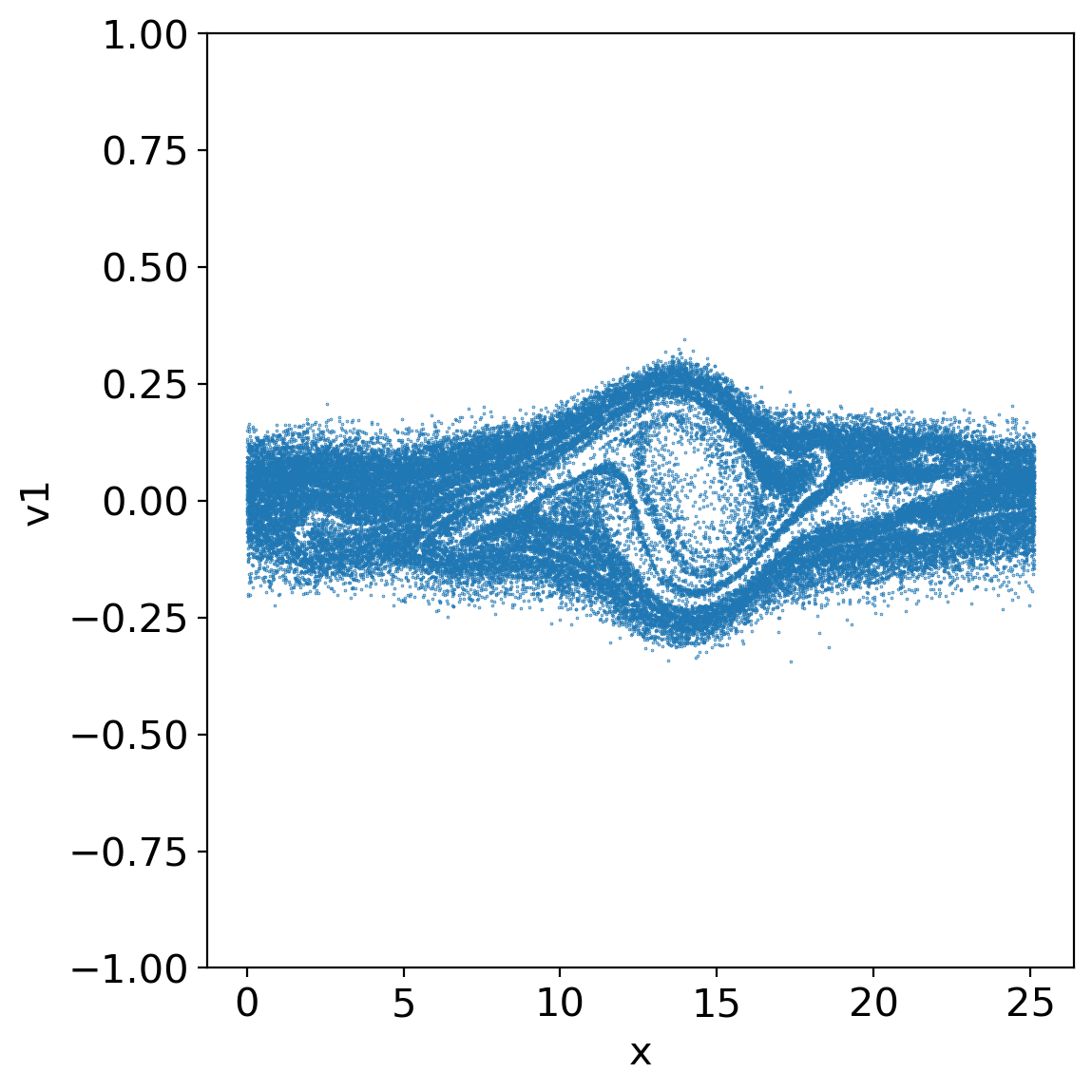}
    \subcaption{$t=4.0$}
  \end{subfigure}\hfill

  \vspace{0.6em}

  \begin{subfigure}[t]{0.24\linewidth}
    \centering
    \includegraphics[width=\linewidth]{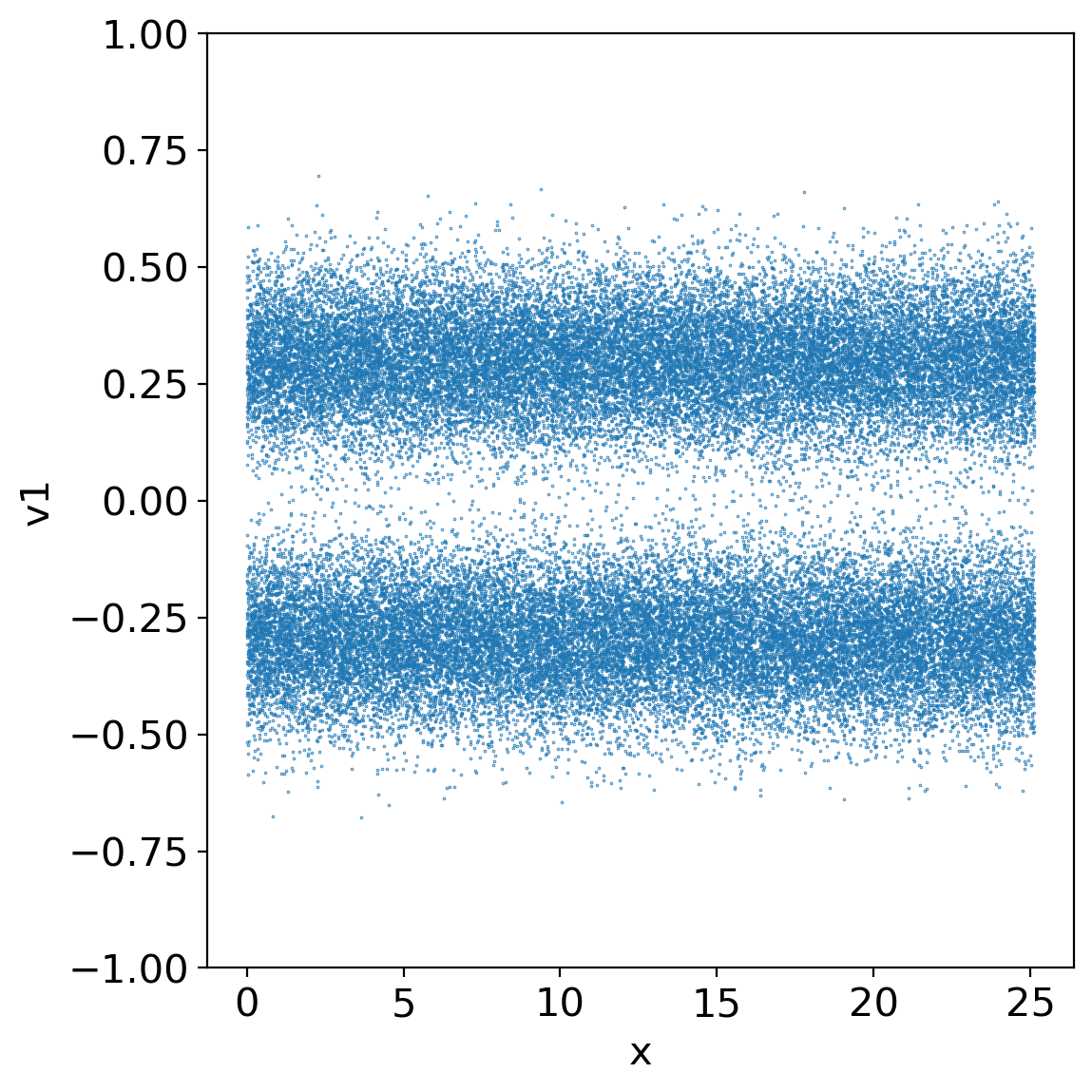}
    \subcaption{$t=0.0$}
  \end{subfigure}\hfill
  \begin{subfigure}[t]{0.24\linewidth}
    \centering
    \includegraphics[width=\linewidth]{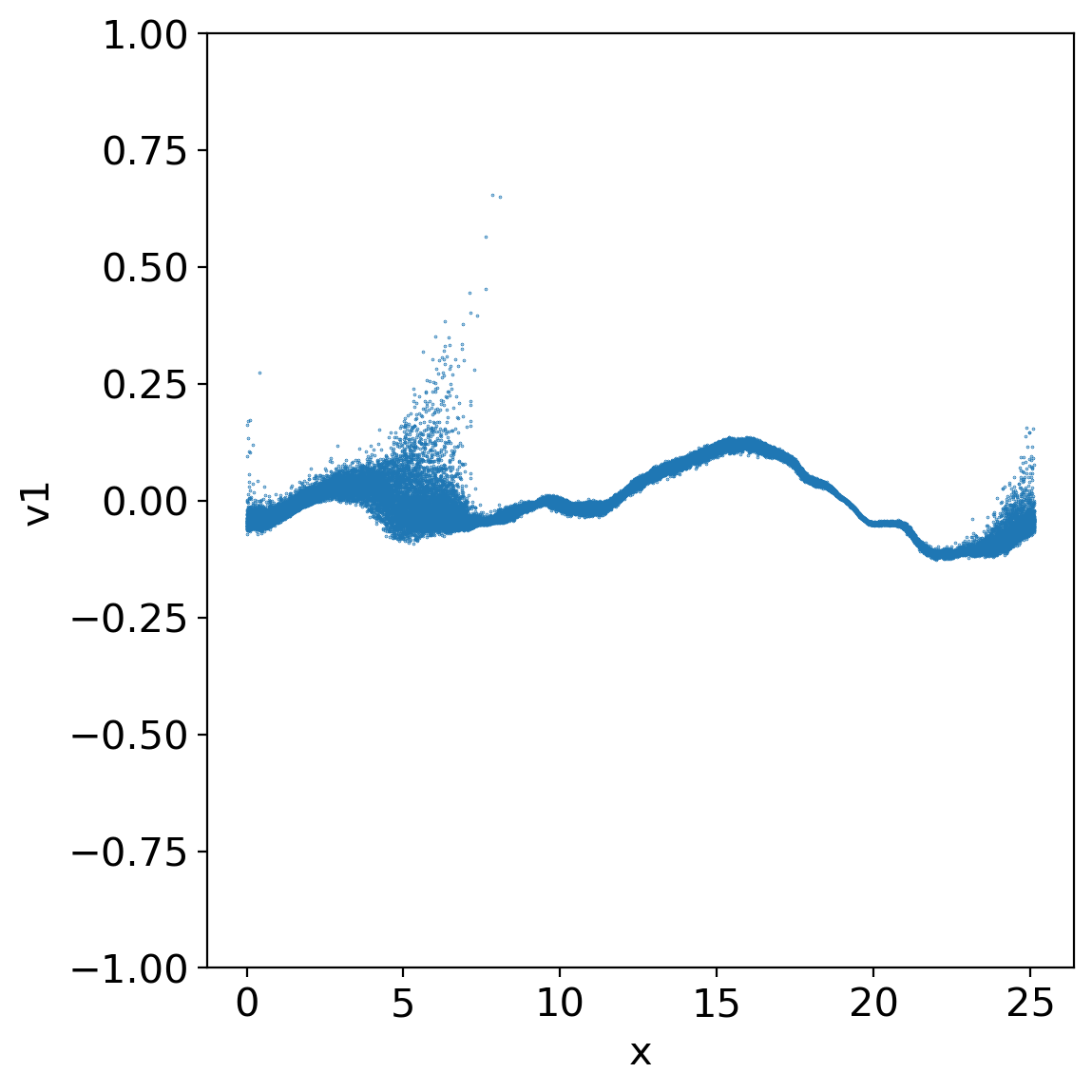}
    \subcaption{$t=0.4$}
  \end{subfigure}\hfill
  \begin{subfigure}[t]{0.24\linewidth}
    \centering
    \includegraphics[width=\linewidth]{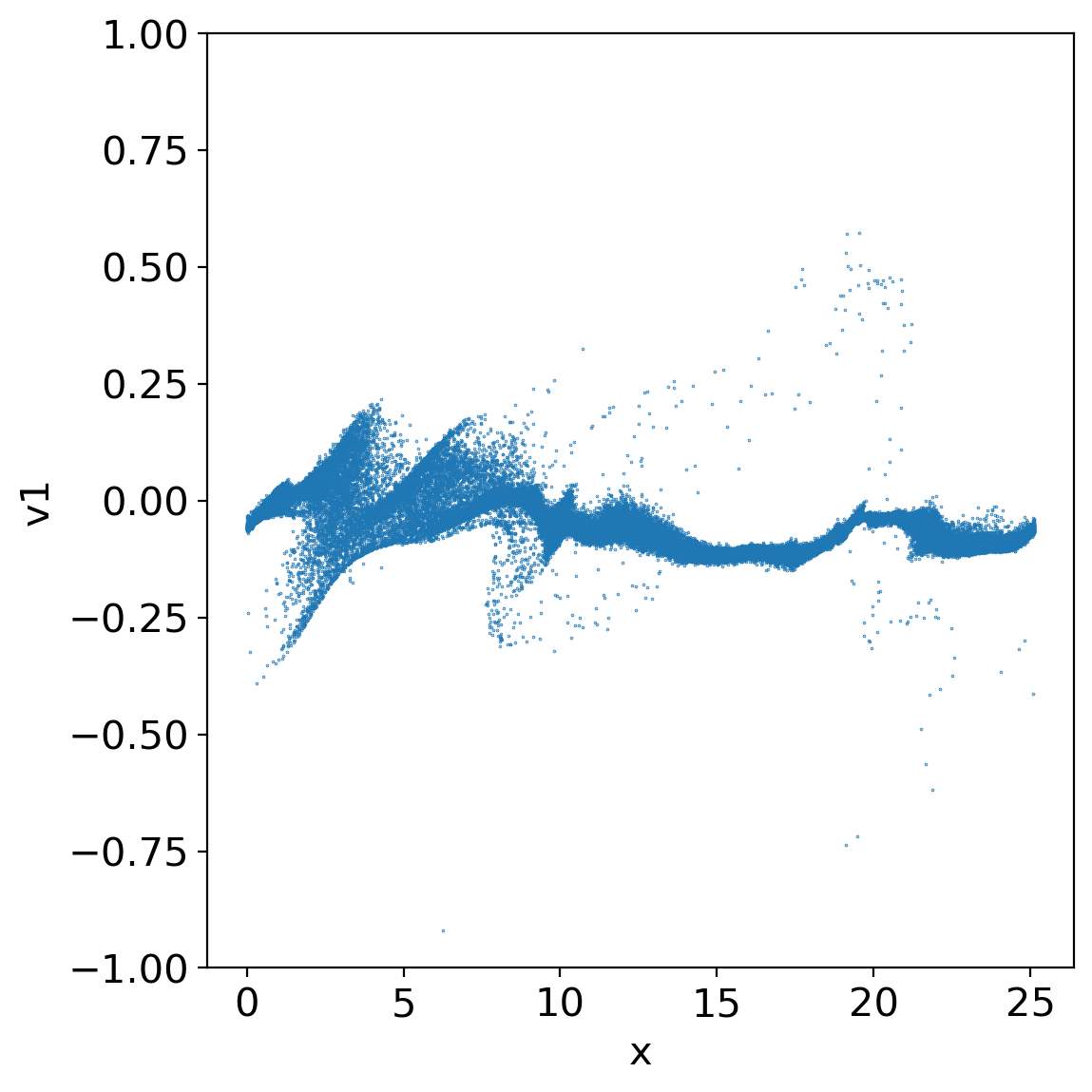}
    \subcaption{$t=1.2$}
  \end{subfigure}\hfill
  \begin{subfigure}[t]{0.24\linewidth}
    \centering
    \includegraphics[width=\linewidth]{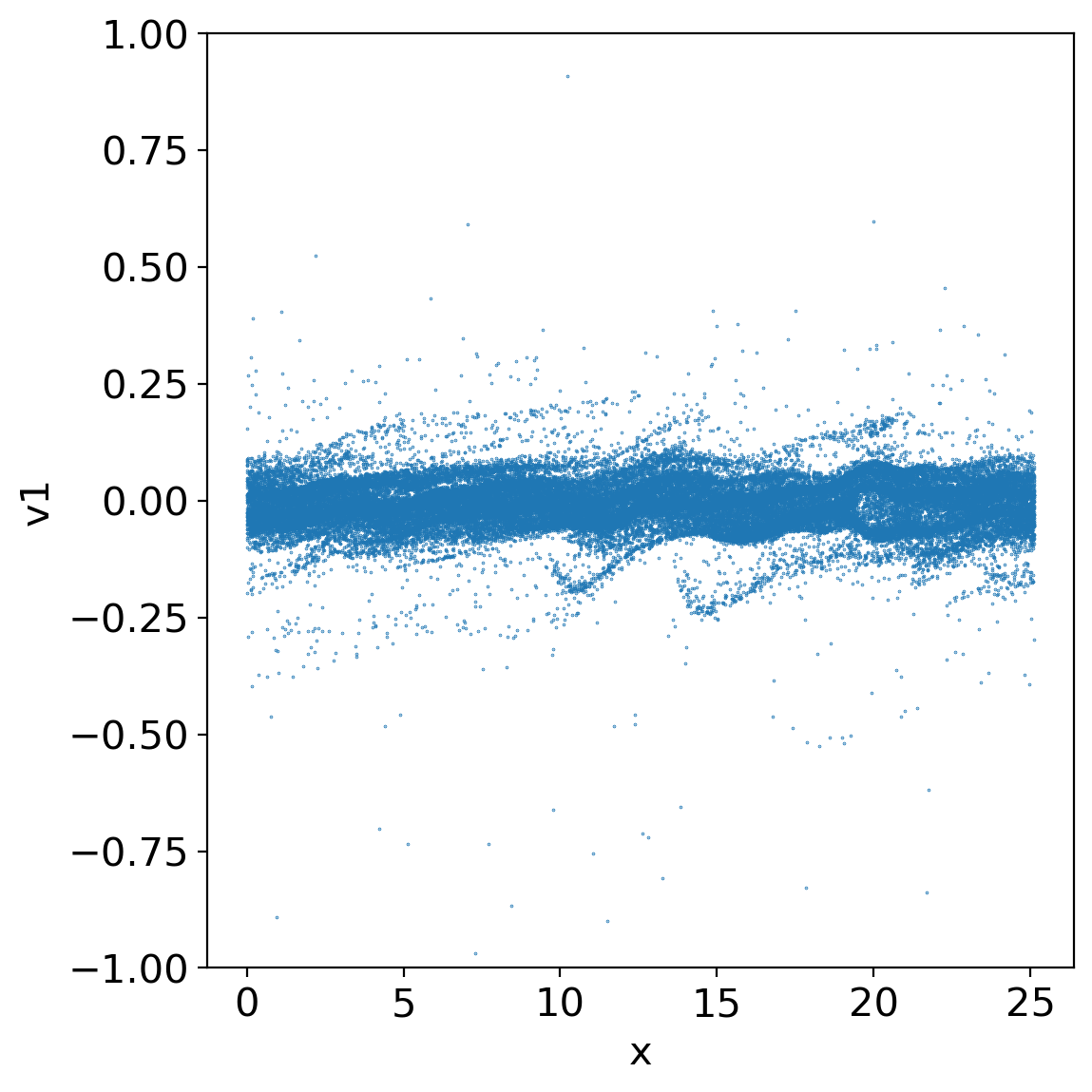}
    \subcaption{$t=4.0$}
  \end{subfigure}\hfill

  \caption{Experiment from \Cref{sec:vafp}. Phase-space density $f(t,x,v_1)$ projected onto the $(x,v_1)$ plane for the $1D$--$3D$ setting, with $\epsilon=0.005$ (top row) and $\epsilon=10$ (bottom row) at different times.
}
  \label{fig:phase-evolution-two-3d}
\end{figure}

\section{Conclusions} \label{sec5}
JKO schemes provide a variational framework for approximating nonlinear gradient flows. In this paper, we extend the classical JKO framework to kinetic equations through a conservative–dissipative decomposition. The resulting kinetic JKO scheme is implemented using particle approximations together with kinetic-oriented neural ODEs. This approach preserves the variational structure of kinetic equations and ensures dissipation of a modified energy functional. Numerical experiments on high-dimensional linear and nonlinear Vlasov–Fokker–Planck systems demonstrate the accuracy and effectiveness of the proposed method.

Future research directions include developing deep neural network–based solvers for more general kinetic equations that exhibit a conservative–dissipative structure. Another important direction is to conduct a detailed numerical analysis of kinetic JKO schemes with neural ODE approximations. Several layers of error analysis warrant careful investigation, including approximation error, optimization error, and sample complexity. It will also be valuable to systematically compare the proposed method with score matching and velocity field matching methods, particularly with respect to their numerical stability and long-time behavior.

\bigskip
\noindent\textbf{Acknowledgment:} 
W. Li is supported by the AFOSR YIP award No. FA9550-23-1-0087, NSF RTG: 2038080, NSF FRG: 2245097, and the McCausland Faculty Fellowship at the University of South Carolina. L. Wang is partially supported by NSF grants DMS-2513336 and Simons Fellowship. L. Wang also thanks Profs. Phillip Morrison and Lukas Einkemmer for fruitful discussions.  W.~Lee acknowledges funding from the National Institute of Standards and Technology under award number 70NANB22H021 and startup funding from The Ohio State University.

\bibliographystyle{siam} 
\bibliography{Var_kinetic_ref.bib}
\end{document}